\providecommand{\bbR}{\mathbb{R}}
\providecommand{\eps}{\varepsilon}
\def\longrightharpoonup{\relbar\joinrel\rightharpoonup}
\def\cv{\longrightharpoonup}
\def\cvwstar{\stackrel{*}{\longrightharpoonup}}
\renewcommand{\leq}{\leqslant}
\renewcommand{\geq}{\geqslant}
\renewcommand{\div}{\operatorname{div}}
\theoremstyle{definition}
\newtheorem{defin}{Definition}[section]
\newtheorem{rmk}[defin]{Remark}
\theoremstyle{plane}
\newtheorem{thm}[defin]{Theorem}
\newtheorem{prop}[defin]{Proposition}
\newtheorem{cor}[defin]{Corollary}
\newtheorem{lemma}[defin]{Lemma}
\newcommand{\tit}{\textit}
\newcommand{\tsc}{\textsc}
\newcommand{\tsl}{\textsl}
\newcommand{\mbb}{\mathbb}
\newcommand{\mc}{\mathcal}
\newcommand{\mf}{\mathfrak}
\newcommand{\mds}{\mathds}
\newcommand{\veps}{\varepsilon}
\newcommand{\what}{\widehat}
\newcommand{\wtilde}{\widetilde}
\newcommand{\vphi}{\varphi}
\newcommand{\oline}{\overline}
\newcommand{\ra}{\rightarrow}
\newcommand{\g}{\gamma}
\renewcommand{\k}{\kappa}
\newcommand{\s}{\sigma}
\renewcommand{\t}{\tau}
\newcommand{\z}{\zeta}
\newcommand{\lam}{\lambda}
\newcommand{\de}{\delta}
\renewcommand{\o}{\omega}
\newcommand{\lan}{\langle}
\newcommand{\ran}{\rangle}
\newcommand{\R}{\mathbb{R}}
\newcommand{\N}{\mathbb{N}}
\newcommand{\Z}{\mathbb{Z}}
\newcommand{\T}{\mathbb{T}}
\renewcommand{\P}{\mathbb{P}}
\newcommand{\D}{\mathbb{D}}
\renewcommand{\div}{{\rm div}\,}
\newcommand{\divh}{{\rm div}_h}
\newcommand{\curlh}{{\rm curl}_h}
\newcommand{\curl}{{\rm curl}\,}
\newcommand{\Supp}{{\rm Supp}\,}
\newcommand{\dx}{ \, {\rm d} x}
\newcommand{\dt}{ \, {\rm d} t}
\newcommand{\al}{\alpha}
\newcommand{\bt}{\beta}
\def\d{\partial}
\def\div{{\rm div}\,}
\newcommand{\dd}{{\rm d}}
\begin{document}

\newcommand{\fra}[1]{\textcolor{blue}{#1}}

\title{\textsc{\Large{\textbf{
Fast rotating non-homogeneous fluids in thin domains and the Ekman pumping effect}}}}

\author{\normalsize\textsl{Marco Bravin}$\,^{1,}\footnote{Present address: \tit{Delft Institute of Applied Mathematics},
\tsc{Delft University of Technology} -- Mekelweg 4, 2628 CD Delft, THE NETHERLANDS.}\qquad$ and $\qquad$
\textsl{Francesco Fanelli}$\,^{2}$ \vspace{.5cm} \\
\footnotesize{$\,^{1,2}\;$ \textsc{Universit\'e de Lyon, Universit\'e Claude Bernard Lyon 1}}  \vspace{.1cm} \\
{\footnotesize \it Institut Camille Jordan -- UMR 5208}  \vspace{.1cm}\\
{\footnotesize 43 blvd. du 11 novembre 1918, F-69622 Villeurbanne cedex, FRANCE} \vspace{.3cm} \\
\footnotesize{Email addresses: $\,^{1}\;$\ttfamily{bravin@math.univ-lyon1.fr}}, $\;$
\footnotesize{$\,^{2}\;$\ttfamily{fanelli@math.univ-lyon1.fr}}
\vspace{.2cm}
}

\date\today

\maketitle

\subsubsection*{Abstract}
{\footnotesize In this paper, we perform the fast rotation limit $\veps\ra0^+$ of the density-dependent incompressible Navier-Stokes-Coriolis system in a thin strip
$\Omega_\veps\,:=\,\R^2\times\,]-\ell_\veps,\ell_\veps[\,$, where $\veps\in\,]0,1]$ is the size of the Rossby number and $\ell_\veps>0$ for any $\veps>0$.
By letting $\ell_\veps\longrightarrow0^+$ for $\veps\ra0^+$ and considering Navier-slip boundary conditions at the boundary of $\Omega_\veps$,
we give a rigorous justification of the phenomenon of the Ekman pumping in the context of non-homogeneous fluids.
With respect to previous studies (performed for flows of contant density and for compressible fluids),
our approach has the advantage of circumventing the complicated analysis of boundary layers.

To the best of our knowledge, this is the first study dealing with the asymptotic analysis of fast rotating incompressible fluids with variable density
in a $3$-D setting. In this respect, we remark that
the case $\ell_\veps\geq\ell>0$ for all $\veps>0$ remains largely open at present.

}

\paragraph*{\small 2020 Mathematics Subject Classification:}{\footnotesize 35B25 
(primary);
35B40 
76U05, 
35Q86 
(secondary).}

\paragraph*{\small Keywords: }{\footnotesize Navier-Stokes-Coriolis system; variable density; thin domain; low Rossby number; Ekman pumping.}


\section{Introduction} \label{s:intro}

This paper is devoted to the study of the dynamics of non-homogeneous viscous incompressible fluids whose motion is primarily influenced by the action
of a strong Coriolis force.
The main application we have in mind is the description of the motion of currents in the ocean (see \cite{CR-B}, \cite{Vallis}),
but other physical phenomena actually display similar features.

To begin with, let us present the equations we are going to consider and describe precisely the problem we want to tackle.

\subsection{Formulation of the problem} \label{ss:formulation}

We assume that the viscous incompressible non-homogeneous fluid occupies the space domain
\begin{equation} \label{eq:Omega_e}
\Omega_\veps\,:=\,\R^2\times\,]-\ell_\veps,\ell_\veps[\,, 
\end{equation}
where $\veps\in\,]0,1]$ and the sequence $\big(\ell_\veps\big)_{\veps\in\,]0,1]}$ is a decreasing sequence of strictly positive real numbers,
$\ell_\veps>0$ for all $\veps>0$, such that
\begin{equation} \label{eq:ell}
 \ell_\veps\,
 \searrow\,0^+\qquad\qquad\qquad \mbox{ for }\qquad \veps\ra0^+\,.
\end{equation}
Here we speak about sequences, but this is not unfair if one keeps in mind the choice $\veps_n\,=\,1/n$ for $n\in\N\setminus\{0\}$, which we will always
tacitly assume in this work.

Then, for any $\veps\in\,]0,1]$, the motion of the fluid is described by the following system of PDEs,
%
\begin{equation} \label{eps:sys}
\left\{ \begin{array}{l}
\partial_t \rho_{\eps}\, +\, \div\big(\rho_{\eps}\, u_{\eps}\big)\,= \, 0 \\ 
\partial_t\big(\rho_{\eps}\, u_{\eps}\big)\, +\, \div\big(\rho_{\eps}\, u_{\eps} \otimes u_{\eps}\big)\, -\, \Delta u_{\eps}\, +\,
\dfrac{1}{\eps}\,\nabla \pi_{\eps}\, +\, \dfrac{1}{\eps}\,e_{3} \times \rho_{\eps}\,u_{\eps}\, = \, 0 \\ 
\div\big(u_{\eps}\big)\,= \, 0\,, 
        \end{array}
\right.
\end{equation}
which we set in the time-space cylinder
\[
\R_+\times\Omega_\veps\,.
\]

In the previous system, the unknowns are the scalar functions $\rho_\veps\geq0$ and $\pi_\veps$ and the vector field
$u_\veps\in\R^3$, representing respectively the density of the fluid, its pressure and its velocity field.
They are functions of the time and space variables $(t,x)\,\in\,\R_+\times\Omega_\veps$.

The first and second equations appearing in \eqref{eps:sys} represent, respectively, the physical principles of conservation of mass and of linear momentum.
The fluid is assumed to be incompressible, a condition which is translated by the last equation in
\eqref{eps:sys}; correspondingly, the gradient term $\nabla\pi_\veps$ appearing in the momentum equation can be interpreted as a
Lagrangian multiplier enforcing at any time the divergence-free constraint over $u_\veps$.
Finally, the term $e_3\times \rho_\veps\,u_\veps$ encodes the effects of the Coriolis force
on the dynamics of the fluid; here, $e_3$ denotes the unit vector directed along the vertical direction and the symbol $\times$ denotes
the usual external product of vector fields in $\R^3$.

In system \eqref{eps:sys}, for any $\veps\in\,]0,1]$ we consider initial conditions
\[
\big(\rho_\veps\,,\,\rho_\veps\,u_\veps\big)_{|t=0}\,=\,\big(\rho^{in}_\veps\,,\,m^{in}_\veps\big)\,,
\]
for suitable functions $\rho^{in}_\veps\geq0$ and $m^{in}_\veps\in\R^3$ which will be better specified below.

In order to close the system, we need suitable boundary conditions. In the present work, we decide to work
with \emph{Navier-slip boundary conditions} (sometimes also called \emph{Robin boundary conditions}).
Before introducing them, let us fix a sequence of real numbers
\[
\big(\alpha_\veps\big)_{\veps\in\,]0,1]}\,\subset\,\R_+\,. 
\]
More precise assumptions on that sequence will be given in the statement of our results.
Next, we observe that
\[
\d\Omega_\veps\,=\,\R^2\times\big\{\pm\,\ell_\veps\big\}\,,
\]
in particular its exterior normal $n_\veps$ is always equal to $n\,=\,\pm e_3$ (depending whether we are on the upper boundary or lower boundary
of the slab $\Omega_\veps$) and does not depend on $\veps$.
Then, on $\d\Omega_\veps$ we impose
\begin{equation} \label{eq:bc}
\big(u_\veps\cdot n\big)_{|\d\Omega_\veps}\,=\,0\qquad\qquad \mbox{ and }\qquad\qquad
\big((\D u_\veps)n\times n\big)_{|\d\Omega_\veps}\,=\,-\,\alpha_\veps\,\big(u_\veps\times n\big)_{|\d\Omega_\veps}\,,
\end{equation}
where $\D u\,:=\,\big(Du\,+\,\nabla u\big)/2$ denotes the symmetric part of the Jacobian matrix $Du$ of the velocity field,
$\nabla u=(Du)^t$ being its transpose matrix.

\medbreak

Our main goal here is to study the fast-rotation limit for the above system \eqref{eps:sys}.
This means that, taken a sequence $\big(\rho_\veps,u_\veps\big)_{\veps\in\,]0,1]}$ of solutions\footnote{As a matter of fact, typically the pressure gradient
$\nabla\pi_\veps$ can be recovered from $\rho_\veps$ and $u_\veps$ by solving an elliptic equation.} to \eqref{eps:sys},
we aim at describing their asymptotic dynamics in the limit for $\veps\ra0^+$: more precisely,
we are interested in proving their convergence to some limit point $\big(\rho,u\big)$, and in identifying the evolution
equations this couple satisfies.

Before presenting our main results (for which we refer to Subsection \ref{sss:res_over} below), let us give an overview of previous works related to our problem,
which also serves to explain the main motivations for our investigation.
The study of singular perturbations in fluid mechanics being an old topic, with a broad literature devoted to it in various contexts, we
will limit ourselves to mention only the works which are directly related to ours, as well as others which are less directly related, but are still
functional to understand the contents of this paper.

\subsection{Related studies: incompressible \tsl{vs} compressible flows} \label{sss:related}

The first fact one can remark 
is that, if $\rho_{\veps}^{in}\equiv1$, then $\rho_\veps\equiv1$ for all later times, and then system \eqref{eps:sys} reduces to the classical
incompressible Navier-Stokes system with Coriolis force. In that context, the effects of a fast rotation on the asymptotic behaviour of solutions and on
their lifespan have been largely investigated. We refer \tsl{e.g.} to the book \cite{C-D-G-G} for a compendium of the results available in that situation.

The case of non-homogeneous fluids, \tsl{i.e.} fluids for which the density is non-constant, have been considered afterwards, but essentially in the framework
of compressible flows, namely when the divergence-free condition on $u_\veps$ is dropped in \eqref{eps:sys}.
Many works have been performed about the asymptotic behaviour of solutions in the physically relevant case of combined fast rotation (low Rossby number) limit
and incompressible (low Mach number) limit, even in presence of external forces, like gravity and centrifugal force. More recently,
the case of the full Navier-Stokes-Fourier system was considered, where heat transfer processes are taken into account.
Interestingly, the compressible flows framework is well-suited for a multi-scale analysis, where the Rossby and Mach numbers (and possibly the Froude number,
measuring stratification effects) are assumed to be small, but with different orders of magnitude (\tsl{i.e.} different powers of the small parameter $\veps>0$).
We refer to \tsl{e.g.} \cite{F_2019} and \cite{DS-F-S-WK} for an overview of the available results, further references and recent developments on the subject.

\medbreak
Observe that, for density-dependent fluids, the Coriolis term $e_3\times \rho_\veps\,u_\veps$ is skew-symmetric with respect to the $L^2$ scalar product,
but it is no more skew-symmetric with respect the $H^s$ scalar product when $s>0$, unless some smallness condition is imposed on the densities.
Because of this, in order to study the fast rotation limit for density-dependent fluids, it is natural to work in the setting of finite energy weak solutions:
this is the case of \cite{F_2019}, \cite{DS-F-S-WK} and essentially all the works mentioned therein. The present paper will do no exception to this.

Correspondingly, in order to treat the singular perturbation problem, one needs to consider initial densities which are close to
stationary solutions of the equations, namely $\rho_{\veps}^{in}\,=\,\wtilde\rho\,+\,\veps\,r_\veps^{in}$, where the reference density
$\wtilde\rho$ 
solves the corresponding system with $u_\veps\equiv0$ and the sequence of density variations $r_\veps^{in}$ is bounded in suitable norms.
Thus, in the compressible setting, the precise form of $\wtilde\rho$ is completely identified (still, not in a unique way, in general) by the pressure
term (which is now given; we will come back to this in a while) and the presence of external forces. For instance, in absence of gravitational and centrifugal forces,
one has $\wtilde \rho\equiv1$.

Finally, there is another point which deserves special attention in all this: as already mentioned, in the compressible case
the pressure term $\nabla\pi_\veps$ is no more an unknown of the problem.
For barotropic flows, for instance, the pressure $\pi_\veps=\pi(\rho_\veps)$ is a known function of the density; in the heat-conducting case, instead
$\pi$ also depends on the temperature function. Then, even though the analysis of the pressure term is technically involved
(especially when working with weak solutions), two great advantages appear. First of all, the structure of the problem allows one to prove the same density
decomposition as above also at any later time: one has $\rho_\veps\,=\,\wtilde\rho\,+\,\veps\,r_\veps$, where each $r_\veps$ represents, roughly speaking, the
evolution of the initial density perturbations $r_\veps^{in}$ by the flow\footnote{Here we allow ourselves to improperly speak about the flow associated to the solution
although the problem is not known to be well-posed in the context of weak solutions, just to give the reader a simple picture to retain.}
of the solution.
In addition, the pressure gradient immediately gives an information on the target density variation $r\,:=\,\lim_{\veps\ra0^+} r_\veps$
and, in some special cases (namely, when the Rossby and the Mach numbers are assumed to be of the same order of magnitude), it allows to derive a fundamental
relation linking $r$ and the target velocity field.

\subsection{Previous results for incompressible density-dependent fluids} \label{sss:dd-incompr}

In the incompressible case with variable density, as considered in \eqref{eps:sys}, we notice that the coupling between the mass equation
and the momentum equation becomes weaker, inasmuch as the pressure term $\nabla\pi_\veps$ is only a Lagrangian multiplier associated to
the divergence-free condition over $u_\veps$.

This fact entails several consequences, which make the study for incompressible density-dependent fluids much more involved than in the compressible instance.
First of all, even in absence of external forces, one may consider several reference density states $\wtilde\rho$: certainly the case $\wtilde\rho\equiv1$ is
one choice (this corresponds to what we will call \emph{quasi-homogeneous} case), but one may consider also the case when $\wtilde\rho$ is truly variable
(what we will call \emph{fully non-homogeneous} case).
Moreover, the pressure gradient does not give any direct information on the density functions, not even at the limit.
The consequence of this is that, for three-dimensional flows, in the fully non-homogeneous case one misses fundamental properties when performing the fast rotation
limit, which prevent one from deriving what is called the \emph{Taylor-Proudman theorem} in geophysics\footnote{This theorem says that the flow which undergoes
the action of a strong Coriolis force looks like two-dimensional (the motion essentially taking place on a plane orthogonal to the rotation axis)
and is spolied of any vertical structure. We refer to \tsl{e.g.} \cite{CR-B}, \cite{C-D-G-G} and \cite{Vallis} for more insights both from the physical
and mathematical viewpoints.}.

This is the main reason why the study of the fast rotation limit of non-homogeneous incompressible fluid equations had not received attentions for a long time.
To the best of our knowledge, the first work treating this problem is \cite{F:G}, where the authors were able to perform the asymptotic study
both in the quasi-homogeneous and in the fully non-homogeneous cases, but only in two space dimensions (the corresponding system is simply
the projection of the full $3$-D system \eqref{eps:sys} onto the horizontal plane). Of course, in that instance the derivation of the Taylor-Proudman theorem
was no more an issue (and this is the fundamental simplification that the $2$-D geometry entails), although several complications
still appeared, at least for non-constant reference densities
$\wtilde\rho$. For instance, here it is no more clear that having $\rho^{in}_\veps\,=\,\wtilde\rho\,+\,\veps\,r^{in}_\veps$
implies that the same decomposition holds true for any later time, because $\wtilde\rho$ is no more transported by the flow and, differently from the compressible case,
one cannot rely anymore on the pressure term to guarantee that property.
The key point of the analysis was to use a fundamental boundedness-compactness property hidden in the wave system, \tsl{i.e.} the system which describes
propagation of fast oscillating waves (the so-called Rossby waves) associated to the singular part of the equations. That property
allowed one to prove that the claimed decomposition is in fact true, although in a very weak sense. This implied a series of consequences which, in turn,
allowed to pass to the limit. Owing to the mild information at one's disposal, however, the limit dynamics which was identified was underdetermined,
as only one equation was available for a combination of two target quantities, namely the vorticity of the limit flow and the limit $r$ of the density perturbations.

To conclude this part, we mention that the study of \cite{F:G} has recently been generalised to the case of the MHD system with Coriolis force 
in \cite{C-F}. Let us also quote work \cite{Sbaiz}, devoted to the inviscid case (the study therein was performed only in the quasi-homogeneous regime,
though, because one was obliged to deal with regular solutions).

\subsection{Overview of the main results} \label{sss:res_over}

In the present paper, we consider the fast rotation limit for density-dependent incompressible Navier-Stokes system \eqref{eps:sys}
in the infinite slab $\Omega_\veps$ defined in \eqref{eq:Omega_e}. 
The main goal of our study is twofold.

On the one hand, we aim at giving a better understanding of the general three-dimensional setting.
Because of the difficulties mentioned in the previous subsection, it is natural to consider the problem in a framework which allows to
recover the properties set forth by the Taylor-Proudman theorem in the limit. This is why we impose the narrowness condition \eqref{eq:ell} on the size
$\ell_\veps$ of each slab $\Omega_\veps$.
Notice that the general case $\ell_\veps\geq\ell>0$ remains largely open at present.

On the other hand, by imposing the Navier-slip boundary conditions \eqref{eq:bc} at the boundary $\d\Omega_\veps$, we aim at giving a new derivation
of the \emph{Ekman pumping} effect in geophysics. As a direct consequence of the Taylor-Proudman theorem, it is well-known that, in a small region close
to the boundary of the domain (region called ``Ekman layer''), the flow must be slowed down for passing from an essentially horizontal configuration
to being at rest at the boundary. Because of this, a global circulation phenomenon, called \emph{Ekman suction}, is created, which involves small 
portions of the fluid in the boundary layer and other ones in the interior of the domain.
Even though it concerns a small amount of the fluid, the Ekman suction phenomenon has an important effect in the energy balance, as it is
responsible for dissipation of kinetic energy in the boundary layer by damping the inner motion. This effect is precisely what is called Ekman pumping.
At the mathematical level,  the Ekman pumping effect is encoded by the appearing of a damping term in the limit momentum equation, and is usually derived
by a complicated analysis of the Ekman boundary layer. We refer for instance to \cite{Gr-Masm}, \cite{Masm} and \cite{C-D-G-G} for the case of homogenous fluids,
to \cite{B-D-GV} and \cite{B-F-P} for the analysis for compressible flows. Notice that, in that analysis, one has to make a smallness assumption
on the size of the vertical diffusion. One is thus led to introduce an anisotropy in the Lam\'e operator (viscous stress tensor),
which is a source of troubles in the compressible case: namely, existence of finite energy weak solutions in that case still remains largely open
at present (see for instance \cite{B-J}), and one has to postulate their existence and uniform energy bounds on the small parameter $\veps>0$,
without having a theory guranteeing that.

\medbreak
As already mentioned, we give here a new derivation of the Ekman pumping phenomenon for non-homogeneous fluids,
by avoiding the complicated analysis of Ekman layers. We decide to do that for the incompressible system \eqref{eps:sys}, although a similar analysis
would apply also to its compressible counterpart. Notice that fast rotating compressible flows in thin domains have already been considered in the literature,
see \tsl{e.g.} \cite{D-C-N-P}, but, to the best of our knowledge, only in the case of complete slip boundary conditions.
We also point out that, in our investigation, the horizontal part $\R^2$ of the domain $\Omega_\veps$ could be replaced by the two-dimensional
torus $\T^2$ with essentially no changes in the analysis (which actually would become slightly simpler).

Thus, we will work on system \eqref{eps:sys}, supplemented with Navier-slip boundary conditions \eqref{eq:bc}.
We will perform our asymptotic study $\veps\ra0^+$ in the context of global in time \emph{finite energy weak solutions}
and for general \emph{ill-prepared} initial data. We will consider both settings: the quasi-homogeneous one (where the reference density
state $\wtilde\rho$ is constant, say $1$) and the fully non-homogeneous one (in which $\wtilde\rho$ is truly variable).

In passing, we observe that, the Navier-slip boundary conditions being not so common in the mathematical literature, 
in the present paper we also establish the existence of global in time finite energy weak solutions (solutions \tsl{\`a la Leray}) for our model.

Coming back to the asymptotic study, we show that the only relevant regime to consider is the one in which
\[
\lim_{\veps\ra0^+}\frac{\alpha_\veps}{\ell_\veps}\,=\,\lam\in[0,+\infty[\,.
\]
Indeed, in the case $\lam=+\infty$, the limit dynamics is trivial (the limit velocity field $u$ is just $0$ in that case).
When $\lam=0$, in the limit $\veps\ra0^+$ we recover the same limiting systems identified in the purely two-dimensional case (see \cite{F:G} and \cite{C-F}),
whereas if $\lam>0$ an additional term appears, corresponding exactly to the Ekman pumping phenomenon.

The core of the proof essentially consists of two main steps. In the fist one, we prove precise quantitative estimates, which allow us to deduce on the one hand
that the motion is progressively spoiled of its vertical component when $\veps\ra0^+$, on the other hand that the solution $\big(\rho_\veps,u_\veps\big)$ tends to
be independent of the vertical variable. This is exactly the Taylor-Proudman theorem in our context.
The consequence of all this is that only vertical averages of the solutions matter to understand the limit
dynamics. With this idea in mind, thanks to the quantitative estimates mentioned above, we can somehow linearise the non-linear terms, in the sense that, for understanding
the limit of the vertical average of a product (for instance, this is the case of the convective term), we can work on the product of the vertical averages.
Then, and this is the second main step of the proof, we can adapt the main ingredients of the analysis of the planar case \cite{F:G}
to prove the convergence of the vertical averages towards the target system.

We conclude this part by mentioning that, as it was already the case in \cite{F:G} and \cite{C-F},
the limit dynamics is well-identified only in the quasi-homogeneous regime, while
it remains underdetermined in the fully non-homogeneous case. On the other hand, we will somehow simplify the arguments of those works.
For instance, the proof of the convergence in the quasi-homogeneous case does not rely on the analysis of the system of Rossby
waves, in particular we need not to apply a regularisation procedure of the solutions; instead, we will rather prove that the vertical average of
the horizontal component of the velocity fields locally strongly converges to the target velocity profile by a direct analysis of the momentum equation.

\subsubsection*{Structure of the paper}

To conclude, we present a short outlook of the remaining part of the paper.

In the next section, we fix the value of the parameter $\veps\in\,]0,1]$ and we prove the existence of a finite energy weak solution to system
\eqref{eps:sys}-\eqref{eq:bc}, which is defined globally in time. The arguments follow the main lines of the ones used for classical
no-slip or complete slip boundary conditions, but, in absence of a precise reference, we decided to give most of the details.

In Section \ref{s:fast}, we present the main results concerning the fast rotation asymptotics $\veps\ra0^+$ of system \eqref{eps:sys}-\eqref{eq:bc}.
The proof of the results is carried out in Sections \ref{s:bounds} and \ref{s:proof-main}.
In the former section, we collect uniform estimates for the family of weak solutions $\big(\rho_\veps,u_\veps\big)_\veps$ and basic consequences
of those bounds. We will also be able to prove the statements concerning the degenerate case $\lam=+\infty$ and the quasi-homogeneous limit $\wtilde\rho=1$.
The latter section, instead, is devoted to the proof of the convergence in the most general situation of the fully non-homogeneous regime,
which is also the hardest case to handle.

\subsection*{Acknowledgements}

{\small
The work of both authors has been partially supported by the project CRISIS (ANR-20-CE40-0020-01), operated by the French National Research Agency (ANR).

The work of the first author has been partially supported by the NWO grant OCENW.M20.194.

The work of the second author has been partially supported by the LABEX MILYON (ANR-10-LABX-0070) of Universit\'e de Lyon, within the
program ``Investissement d'Avenir'' (ANR-11-IDEX-0007), and by the projects BORDS (ANR-16-CE40-0027-01) and SingFlows (ANR-18-CE40-0027),
all operated by the French National Research Agency (ANR).

}


\section{Existence of finite energy weak solutions}

In order to perform our programme about the asymptotic behaviour of system \eqref{eps:sys}, we first need a result guaranteeing us
the existence, for any value of the parameter $\veps\in\,]0,1]$, of finite energy weak solutions, in the same spirit of
Leray's solutions to the classical (\tsl{i.e.} with $\rho\equiv1$) incompressible Navier-Stokes system.

Of course, some results about existence of Leray-type weak solutions for system \eqref{eps:sys} are available in the literature,
but, to the best of our knowledge, mainly in the case where \emph{no-slip boundary conditions} are imposed (namely, the condition $u=0$ is assumed
on the boundary of the domain). This has been done \tsl{e.g.} in \cite{Kaz} under the assumption
that the initial density satisfy $\rho^{in}\geq\rho_*>0$, a condition which was relaxed afterwards in \cite{Simon} to $\rho^{in}\geq0$.
The theory was later extended in \cite{lions:book} to allow the viscosity coefficient to depend on the density.

On the contrary, the existence of Leray-type weak solutions to \eqref{eps:sys} in the case of Navier-slip boundary conditions \eqref{eq:bc},
as considered in this work, seems not having received too much attention in the past. Proving such an existence result
is the goal of the present section.

Here, we are going to work at any $\veps\in\,]0,1]$ fixed. Since its precise value, as well as the values of the
other parameters $\ell_\veps$ and $\alpha_\veps$, does not play any role, without loss of generality
we set
\begin{equation} \label{eq:fixed-param}
 \veps\,=\,1\,,\qquad\qquad \ell_\veps\,=\,1\,,\qquad\qquad \alpha_\veps\,=\,\alpha\,\geq\,0
\end{equation}
in system \eqref{eps:sys}-\eqref{eq:bc}. Throughout all this section, we will implicitly work with that choice of the parameters; correspondingly,
we will work in the domain
\[
 \Omega\,=\,\R^2\times\,]-1,1[\,.
\]

To begin with, let us introduce some important functional spaces. We define
\begin{align*}
L^2_{\sigma}(\Omega;\R^3)\,&:=\,
\Big\{ u \in L^2(\Omega; \mathbb{R}^3)\; \Big|\quad \div(u)\,=\,0\quad \mbox{ and }\quad (u\cdot n)_{|\d\Omega}\,=\,0\ \Big\} \\
H^1_{0,\sigma}(\Omega;\R^3)\,&:=\,L^2_{\sigma}(\Omega;\R^3)\,\cap\, H^1(\Omega;\R^3)\,.
\end{align*} 
We remark that (see Chapter III of \cite{Galdi} and Chapter 1 of \cite{Tsai} for details) $L^2_\s(\Omega;\R^3)$
coincides with the closure of $C^\infty_{c,\s}(\Omega)$,
the space of smooth compactly supported functions on $\Omega$ having null divergence, with respect to the
$L^2$ norm.
Moreover for a subset $ A \subset \Omega $, we denote by $  \mds{1}_A $ the characteristic function of $ A $,
more precisely the function $ \mds{1}_A : \Omega  \to \{0,1\} $ which takes value $ 1 $ for $ x \in A $ and $ 0 $ otherwise.

Next, we introduce the definition of Leray-type weak solutions to system \eqref{eps:sys}-\eqref{eq:bc}-\eqref{eq:fixed-param}.

\begin{defin}
\label{def:weak:sol}
Let $ \rho^{in} \in L^{\infty}(\Omega)$ be a function such that $ \rho^{in} \geq 0 $ and
\begin{equation} \label{eq:non-deg}
	\exists\,\de>0\qquad \mbox{ such that }\qquad\qquad\qquad 
	\frac{1}{\rho^{in}}\,\mds{1}_{\{\rho^{in}\leq\de\}}\,\in\,L^1(\Omega)\,.
\end{equation}
Let $ m^{in}\in L^2(\Omega;\R^3) $ be a vector field such that
$ m^{in} \equiv 0 $ on the set $\big\{ x\in\Omega\,\big|\; \rho^{in}(x) = 0 \big\}$ and $ \big|m^{in}\big|^2/ \rho^{in} \in L^1(\Omega) $.

Then, given $T>0$, we say that a couple $ (\rho, u) $ is a \emph{finite energy weak solution} of system \eqref{eps:sys}-\eqref{eq:bc}-\eqref{eq:fixed-param}
on $[0,T[\,\times\Omega$, related to the initial
datum $\big(\rho,\rho u\big)_{|t=0}\,=\,\big(\rho^{in},m^{in}\big)$, if the following conditions are satisfied:
\begin{enumerate}[(i)]
 \item $\rho\in L^\infty\big([0,T[\,\times\Omega\big)$, with $\rho\geq0$ almost everywhere in $[0,T[\,\times\Omega$;
 \item $u\in L^\infty\big([0,T[\,;L^2_\s(\Omega;\R^3)\big)\,\cap\,L^2\big([0,T[\,;H^1_{0,\s}(\Omega;\R^3)\big)$;
 \item $\rho$ solves the first equation of \eqref{eps:sys} in a distributional and renormalised sense: for any $\bt\in C^1_b(\R)$ and any
$\vphi\in C^\infty_c\big(\R_+\times\oline{\Omega}\big)$, one has
\[ 
-\iint_{\R_+\times\Omega} \bt(\rho)\,\big(\d_t\vphi\,+\,u\cdot\nabla\vphi\big)\,\dx\dt\,=\,
\int_\Omega\beta(\rho^{in})\,\varphi(0,\cdot)\,\dx \,;
\] 
\item $u$ solves the second equation of \eqref{eps:sys} in the distributional sense: for any
$ \psi \in C^{\infty}_{c}\big(\R_+\times\oline{\Omega};\R^3\big)$ such that $ \div (\psi) = 0 $ and $\psi\cdot n=0$ on $\d\Omega$, one has
\begin{align}
&\iint_{\R_+\times\Omega}\Big(-\rho\,u\cdot\d_t\psi\,-\,\rho\,u\otimes u:\nabla\psi\,+\,e_3\times\rho\,u\cdot\psi\Big)\dx\dt
\label{equ:weak:eps} \\
&\qquad\qquad\quad
\,+\,\iint_{\R_+\times\Omega}\nabla u:\nabla\psi\,\dx\dt +\,2\,\alpha\iint_{\R_+\times\d\Omega}u\cdot\psi\,\dx_h\dt\,=\,
\int_\Omega m^{in}\cdot\psi\,\dx\,, \nonumber
\end{align}
where, for two matrices $A$ and $B$ of size $k\times k$, we have denoted by $A:B\,:=\,{\rm tr}\big(A\cdot B^t\big)$
their Forbenius product;
\item the following \emph{energy inequality} holds true: for amost any $t\in[0,T[\,$, one has
\[ 
\hspace{-1cm}
\frac{1}{2}\int_{\Omega} \rho(t,\cdot)\,|u(t,\cdot)|^2\,\dx\,+\,\iint_{[0,t]\times\Omega}\big|Du\big|^2\,\dx\,\dd\t\,+\,
2\,\alpha\,\iint_{[0,t]\times\d\Omega}|u|^2\,\dx_h\,\dd\t\,\leq\, \int_{\Omega} \frac{|m^{in}|^2}{\rho^{in}}\,\dx\,.
\] 
\end{enumerate}
The solution is said \emph{global in time} if the previous conditions hold for any $T>0$.
\end{defin}

Let us remark that the meaning of satisfying the initial condition of the momentum equation is non trivial, but well understood by now; see for instance
the comments preceding Lemma 2.1 of \cite{lions:book} and Theorem 2.2 of the same book.

The main result of the present section reads as follows.

\begin{thm}
\label{exi:theo}
Let $ \rho^{in} \in L^{\infty}(\Omega)$ be a function such that $ \rho^{in} \geq 0 $ and satisfying condition \eqref{eq:non-deg}.
Let $ m^{in}\in L^2(\Omega;\R^3) $ be a vector field such that
$ m^{in} \equiv 0 $ on the set $\big\{ x\in\Omega\,\big|\; \rho^{in}(x) = 0 \big\}$ and $ \big|m^{in}\big|^2/ \rho^{in} \in L^1(\Omega) $.

Then, there exists a global in time finite energy weak solution $(\rho, u)$ of system \eqref{eps:sys}-\eqref{eq:bc}-\eqref{eq:fixed-param}
related to the initial condition $ \big( \rho^{in}, u^{in} \big) $, in the sense of Definition \ref{def:weak:sol}.
\end{thm}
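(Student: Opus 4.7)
My strategy is the classical one of Lions--Simon--Kazhikhov for variable density incompressible flows, adapted to the two peculiarities of the present setting: the Navier--slip boundary condition \eqref{eq:bc} and the unbounded horizontal direction $\R^2$. The Coriolis term contributes nothing at the energy level, since $(e_3\times\rho u)\cdot u\equiv0$, so the only structural issue at the level of a priori estimates is to handle the boundary.

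\textbf{Step 1: approximation scheme.} I would first regularise the initial data by setting $\rho^{in}_n:=\rho^{in}*\eta_n+1/n$, which is smooth and bounded below by $1/n>0$, and $m^{in}_n:=\mathbb{P}_\s(\eta_n*m^{in})$, where $\mathbb{P}_\s$ is the Leray projector on $\Omega$ adapted to the Navier condition $u\cdot n=0$. To handle the unboundedness, I would truncate horizontally on $Q_n:=[-n,n]^2\times\,]-1,1[\,$ and construct an approximate solution on each $Q_n$ (with the same Navier condition on $z=\pm1$ and, say, zero Dirichlet on the lateral sides), then let $n\to+\infty$. On each $Q_n$, the velocity is built by Galerkin approximation in a hilbertian basis $(e_k)$ of
\[
V_n\,:=\,\big\{\,v\in H^1(Q_n;\R^3)\ \big|\ \div v=0,\ v\cdot n=0\text{ on }\d Q_n\,\big\}\,,
\]
while the density is obtained by solving, for each Galerkin velocity $u^{(k)}$, the transport equation via DiPerna--Lions theory. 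Existence for the resulting coupled ODE system is a standard fixed-point argument on a short time interval, extended to $[0,T]$ by the energy estimate.

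\textbf{Step 2: uniform energy bound.} Testing the Galerkin momentum equation against $u^{(k)}$ itself and combining with the transport equation (after renormalisation with $\beta(s)=s^2$), one gets that $\frac12\frac{d}{dt}\int\rho|u^{(k)}|^2$ is balanced by $-\int|\nabla u^{(k)}|^2$ plus the boundary contribution coming from integration by parts on $-\Delta u^{(k)}$. Here the Navier condition enters: decomposing $(\D u)n$ into its normal and tangential parts and using \eqref{eq:bc} together with $u\cdot n=0$, one finds $(\D u)n\cdot u=-\alpha|u|^2$ on $\d\Omega$, which yields exactly the boundary dissipation $2\alpha\int_{\d\Omega}|u|^2$ of the energy inequality. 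This delivers uniform bounds for $\rho_n$ in $L^\infty_{t,x}$, for $\sqrt{\rho_n}u_n$ in $L^\infty_tL^2_x$ and for $u_n$ in $L^2_tH^1_x$, jointly with the trace bound on $\d\Omega$; the fact that $\rho^{in}$ is merely bounded (possibly vanishing) is compatible thanks to \eqref{eq:non-deg}, which controls $\int|m^{in}|^2/\rho^{in}$.

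\textbf{Step 3: passage to the limit.} The velocity $u_n$ is uniformly bounded in $L^2_tH^1_x$ and its time derivative $\d_t(\rho_nu_n)$ is bounded in a negative-order Sobolev space in $x$ (from the momentum equation), so a standard Aubin--Lions--Simon argument on each compact subset of $\oline\Omega$ gives strong convergence of $\rho_nu_n$ in $L^2_{\mathrm{loc}}$. Since $\rho_n$ satisfies a linear transport equation with a divergence-free $H^1$ field, DiPerna--Lions renormalisation yields strong convergence of $\rho_n$ in $C_tL^p_{\mathrm{loc}}$ for any $p<+\infty$. Combining these two facts in the classical way, one identifies $\rho_nu_n\otimes u_n \to \rho u\otimes u$ in $\mathcal{D}'$, passes to the limit in the Coriolis term (linear in $\rho u$), and in the boundary term using the continuity of the trace operator from $L^2_tH^1_x$ to $L^2_tL^2(\d\Omega)$. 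The energy inequality is recovered by lower semicontinuity. Taking a further diagonal extraction along the exhaustion $Q_n\nearrow\Omega$ finally produces a weak solution on the whole $\Omega$.

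\textbf{Main obstacle.} The technically delicate point, which I would treat with most care, is the interplay between the Navier boundary term and the compactness of $\sqrt{\rho_n}u_n$ in the degenerate regions $\{\rho^{in}=0\}$: the trace control $\alpha\int_{\d\Omega}|u_n|^2$ and the bulk control $\int|m^{in}|^2/\rho^{in}$ must be manipulated simultaneously to identify the momentum at $t=0$ in the sense of \cite{lions:book}, and to avoid concentration effects at the boundary. Away from this, and from the extra care needed for the horizontal unboundedness (dealt with by localisation, since the Navier condition and the equations are local in $x_h$), the remaining steps are routine adaptations of the Lions--Simon framework.
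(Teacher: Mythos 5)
Your strategy is essentially the one the paper follows: Galerkin on bounded domains, regularisation of the initial data, passage to the limit by Aubin--Lions and renormalisation of the transport equation (the paper invokes Theorem~2.4 of \cite{lions:book} to package this), and finally an invading-domains argument for $\Omega=\R^2\times\,]-1,1[\,$. Your accounting of the boundary integration by parts, namely that $u\cdot n=0$ and the Navier condition give $\big((\D u)n\big)_{\rm tan}=-\alpha u$ and hence the boundary dissipation $2\alpha\int_{\d\Omega}|u|^2$, is also the same as in the paper.

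Two implementation details, however, need more care than your sketch allows. First, the truncated domains: you propose $Q_n=[-n,n]^2\times\,]-1,1[\,$ with Navier on $z=\pm1$ and Dirichlet on the lateral faces. That domain is nonsmooth and the boundary condition changes type at the edges, which creates a nontrivial issue for the Galerkin basis and for the regularity and trace theory one needs. The paper avoids this entirely by taking smooth cylinders $B_{n+1}\times\,]-1,1[\,$ with smoothed-out corners and imposing the Navier condition on the \emph{whole} boundary $\d Q_n$; since weak-limit test functions on $\Omega$ are compactly supported, what happens near the lateral boundary of $Q_n$ is irrelevant in the limit, so there is no loss in this choice. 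Second, the construction of the approximate initial velocity: setting $m^{in}_n:=\mathbb{P}_\s(\eta_n*m^{in})$ with the \emph{unweighted} Leray projector does not produce a divergence-free $u^{in}_n$ with $\rho^{in}_n u^{in}_n$ close to $m^{in}$, nor does it deliver the uniform initial-energy bound $\int\rho^{in}_n|u^{in}_n|^2\lesssim\int|m^{in}|^2/\rho^{in}$. One instead has to approximate $m^{in}/\sqrt{\rho^{in}}$ in $L^2$ by smooth compactly supported $w_n$, set $m^{in}_n:=\sqrt{\rho^{in}_n}\,w_n$, and then perform the \emph{$\rho^{in}_n$-weighted} Helmholtz decomposition $m^{in}_n=\rho^{in}_n u^{in}_n+\nabla q_n$ (Lemma~2.1 of \cite{lions:book}); this is what the paper does in its Lemma~\ref{l:approx}. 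With these two fixes, your proof matches the paper's.
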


The proof follows the ones in \cite{Simon} and \cite{lions:book} for no-slip conditions, with some adjustments.
For sake of completeness, here we will briefly give most of the details of the main changes which have to be performed.
First of all, we are going to prove Theorem \ref{exi:theo} for a smooth bounded domain $Q\subset\R^3$, and then extend the result to the unbounded
case $\Omega=\R^2\times\,]-1,1[\,$ which is of interested for us, by the technique of invading domains.

Thus, for the time being let us work on some smooth bounded domain $Q\subset\R^3$. 
We are going to prove the following result, which is the adaptation of Theorem \ref{exi:theo} to the domain $Q$.
\begin{prop} \label{p:existence}
Let $Q$ be a bounded smooth domain of $\R^3$.
Let $ \rho^{in} \in L^{\infty}(Q)$ be a function such that $ \rho^{in} \geq 0 $ and satisfying condition \eqref{eq:non-deg}.
Let $ m^{in}\in L^2(Q;\R^3) $ be a vector field such that
$ m^{in} \equiv 0 $ on the set $\big\{ x\in Q\,\big|\; \rho^{in}(x) = 0 \big\}$ and $ \big|m^{in}\big|^2/ \rho^{in} \in L^1(Q) $.

Then, there exists a global in time finite energy weak solution $(\rho, u)$ of system \eqref{eps:sys}-\eqref{eq:fixed-param} on $\R_+\times Q$,
supplemented with the boundary conditions \eqref{eq:bc} at $\d Q$ and
related to the initial datum $ \big( \rho^{in}, u^{in} \big) $.
\end{prop}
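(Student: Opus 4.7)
The plan is to follow the Faedo--Galerkin construction of Simon \cite{Simon} and Lions \cite{lions:book}, with two main adaptations: the approximation spaces and the energy identity must respect the Navier-slip boundary conditions \eqref{eq:bc}, and the Coriolis term must be incorporated (in a trivial way, since it is $L^2$-skew-symmetric). I would first establish the proposition on the bounded smooth domain $Q$ and then obtain Theorem \ref{exi:theo} by an invading-domains argument.

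\emph{Approximation scheme.} Fix $\de>0$ and set $\rho^{in}_\de:=\rho^{in}+\de$ (to avoid vacuum at $t=0$); mollify $m^{in}/\rho^{in}_\de$ into a smooth divergence-free field $u^{in}_\de$ tangent to $\d Q$. Pick a Galerkin basis $(e_k)_{k\geq 1}$ of $H^1_{0,\s}(Q;\R^3)$ compatible with \eqref{eq:bc}, for instance smooth eigenfunctions of the Stokes operator with Navier-slip boundary conditions. Looking for
\[
u_N(t,x)\,=\,\sum_{k=1}^N c_k^N(t)\,e_k(x),
\]
together with $\rho_N$ solving $\d_t\rho_N+\div(\rho_N u_N)=0$ with initial datum $\rho^{in}_\de$, and projecting the momentum equation onto $\mathrm{span}(e_1,\dots,e_N)$, yields a closed ODE system for the coefficients $(c_k^N)_{k=1}^N$. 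The Coriolis and boundary contributions enter as bounded bilinear/linear terms, so local existence follows from Cauchy--Lipschitz, and global existence will follow from the energy estimate below.

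\emph{A priori estimates and passage to the limit $N\to+\infty$.} Testing the Galerkin equation against $u_N$ and using $\d_t\rho_N+\div(\rho_N u_N)=0$, the convective and Coriolis terms vanish; integrating by parts the viscous term and exploiting \eqref{eq:bc} produces precisely the boundary contribution $2\alpha\|u_N\|_{L^2(\d Q)}^2$. One thus obtains the analogue of the energy inequality of Definition \ref{def:weak:sol} for $(\rho_N,u_N)$; combined with Korn's inequality under the constraint $u\cdot n=0$ on $\d Q$, this gives uniform bounds on $\sqrt{\rho_N}\,u_N$ in $L^\infty_t L^2_x$ and on $u_N$ in $L^2_t H^1_x$, plus a boundary trace bound in $L^2_t L^2(\d Q)$. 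The density bound $0\leq\rho_N\leq\|\rho^{in}_\de\|_{L^\infty}+\de$ follows from the maximum principle for the transport equation. Passage to the limit follows the usual scheme: compactness of $\rho_N$ in $C([0,T];L^p_{{\rm weak}}(Q))$ via the continuity equation and DiPerna--Lions renormalisation theory, a bound on $\d_t(\rho_N u_N)$ in a negative Sobolev space from the momentum equation, and Aubin--Lions, which together yield strong convergence of $\rho_N u_N$ in $L^2_{t,x}$ and of $\sqrt{\rho_N}\,u_N$ in $L^2_{t,x}$, enough to identify the limit of the convective term $\rho_N u_N\otimes u_N$. The Coriolis term is linear in $\rho_N u_N$ and passes trivially; the boundary term passes by compactness of traces of $u_N$ through Aubin--Lions between $H^{1/2}(\d Q)$ and, say, $H^{-1}(\d Q)$. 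This yields a finite energy weak solution $(\rho_\de,u_\de)$ for the regularised datum.

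\emph{Removing the regularisation and the boundedness of the domain.} To send $\de\to 0^+$, the non-degeneracy condition \eqref{eq:non-deg} combined with the renormalised continuity equation ensures that $\rho$ keeps controlled integrability near its zero set and that $|m|^2/\rho$ stays $L^1$ globally in time; the same compactness argument then produces a weak solution with the original $(\rho^{in},m^{in})$. Finally, to recover Theorem \ref{exi:theo} on $\Omega=\R^2\times\,]-1,1[\,$, one chooses an increasing exhaustion $(Q_k)_{k\in\N}$ of $\Omega$ by smooth bounded domains whose upper and lower faces coincide with $\d\Omega\cap\oline{Q_k}$, applies Proposition \ref{p:existence} on each $Q_k$ with initial data obtained by truncating $(\rho^{in},m^{in})$, and extracts a locally converging subsequence by uniform energy estimates. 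The main obstacle throughout is the possible presence of vacuum, handled by the two-step regularisation and \eqref{eq:non-deg}; a secondary delicate point, specific to the Navier-slip setting, is the identification of the boundary trace term in the limit, which is precisely the reason to select a Galerkin basis adapted to \eqref{eq:bc} from the outset.
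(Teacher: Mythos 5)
Your overall strategy -- Faedo--Galerkin on a regularised initial datum, then $\de\to 0^+$, then invading domains to pass from $Q$ to $\Omega$ -- is exactly the one adopted in the paper (Lemmas \ref{l:Galerkin}, \ref{l:approx} and the proof of Proposition \ref{p:existence}). There is, however, a concrete problem in how you regularise the momentum. You propose to ``mollify $m^{in}/\rho^{in}_\de$ into a smooth divergence-free field $u^{in}_\de$ tangent to $\d Q$''. But near the vacuum set of $\rho^{in}$ one only knows $|m^{in}|^2/\rho^{in}\in L^1$, and
\[
\int_Q \left|\frac{m^{in}}{\rho^{in}_\de}\right|^2\,\dx\;=\;\int_Q \frac{|m^{in}|^2}{(\rho^{in}+\de)^2}\,\dx\;\lesssim\;\frac{1}{\de}\int_Q\frac{|m^{in}|^2}{\rho^{in}}\,\dx\,,
\]
which blows up as $\de\to 0^+$. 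Hence, after mollifying and projecting onto divergence-free fields (via the standard, unweighted Leray projector, which is what ``mollify into a divergence-free field'' naturally means), the resulting $u^{in}_\de$ has no uniform bound on $\int_Q\rho^{in}_\de\,|u^{in}_\de|^2\,\dx$ as $\de\to 0^+$. Without such a bound, the energy inequality for $(\rho_\de,u_\de)$ does not give the compactness you need to pass to the limit $\de\to 0^+$. The paper avoids this by approximating $m^{in}/\sqrt{\rho^{in}}$ (which lives in $L^2$ by hypothesis) by $w_\de\in C^\infty_c(Q)$, setting $m^{in}_\de:=\sqrt{\rho^{in}_\de}\,w_\de$ so that $\int|m^{in}_\de|^2/\rho^{in}_\de=\int|w_\de|^2$ stays bounded, and then using the $\rho^{in}_\de$-weighted Helmholtz decomposition (Lemma 2.1 of \cite{lions:book}) to obtain $u^{in}_\de$ with $\int\rho^{in}_\de|u^{in}_\de|^2\leq C\int|m^{in}|^2/\rho^{in}$. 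This choice of what to mollify is the essential device, and it is absent from your proposal.

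A second, smaller gap: you set $\rho^{in}_\de:=\rho^{in}+\de$, which is merely $L^\infty$, whereas the Galerkin step (Lemma \ref{l:Galerkin}, following Simon) requires the approximate initial density to be $C^1(\overline Q)$, so that the transport equation driven by the smooth Galerkin velocity admits classical solutions. The paper therefore uses $\rho^{in}_\de=\theta_\de\star\rho^{in}+\de$, i.e.\ mollification \emph{and} lifting. Your other choices (an $H^1_{0,\sigma}$-adapted smooth Galerkin basis rather than the eigenfunctions of the Navier-slip Stokes operator, the energy estimate absorbing the $2\alpha\|u_N\|^2_{L^2(\d Q)}$ boundary term, Aubin--Lions plus trace compactness to pass the boundary integral to the limit) are all consistent with the paper's argument and would work, so the remaining structure of your proposal is sound once the data regularisation is corrected.
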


For proving Proposition \ref{p:existence}, we need a further preliminary reduction. More precisely, we start by considering the problem of existence
of weak solutions for \emph{well-behaved} data, in the sense specified in the assumptions of the next lemma.

\begin{lemma} \label{l:Galerkin}
Let the couple $\big(\rho^{in},m^{in}\big)$ be as in Proposition \ref{p:existence}. Assume moreover that
\[ 
\rho^{in}\in C^1(Q)\,,\qquad\qquad \mbox{ with }\qquad \rho^{in} \geq \rho_* > 0\,,
\] 
for a suitable positive constant $\rho_*\in\R$, and that
\[
 u^{in}\,:=\,\frac{m^{in}}{\rho^{in}}\qquad\qquad \text{ verifies }\qquad\qquad
 \div\big(u^{in}\big) = 0 \quad \text{ and }\quad  \big(u^{in} \cdot n\big)_{|\d Q} = 0\,. 
\]

Then, there exists a global in time finite energy weak solution $(\rho, u)$ of the system \eqref{eps:sys}-\eqref{eq:bc}-\eqref{eq:fixed-param}
over $\R_+\times Q$, related to the initial condition $ ( \rho^{in}, \rho^{in} u^{in} ) $. 
\end{lemma}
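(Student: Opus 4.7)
The plan is to prove Lemma \ref{l:Galerkin} by a Galerkin approximation scheme, adapting the classical strategy of Simon \cite{Simon} and Lions \cite{lions:book} (originally devised for no-slip boundary conditions) to the Navier-slip setting \eqref{eq:bc}. The main changes concern the choice of the Galerkin basis, which must respect the non-penetration condition $u\cdot n=0$ on $\d Q$, and the integration by parts of the viscous term, which must produce the correct boundary dissipation $2\alpha\int_{\d Q}|u|^2$. Concretely, I would fix a Hilbert basis $(e_k)_{k\geq1}$ of $H^1_{0,\sigma}(Q;\R^3)$ made of smooth functions satisfying \eqref{eq:bc} pointwise (for instance, the eigenfunctions of the Stokes operator with Navier-slip boundary conditions on $Q$, smooth up to $\d Q$ by elliptic regularity). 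Setting $V_N:=\mathrm{span}(e_1,\dots,e_N)$ and looking for $u_N(t,x)=\sum_{k=1}^N c_{k,N}(t)\,e_k(x)$, at each $N$ the system decouples: the transport equation $\d_t\rho_N+\div(\rho_N u_N)=0$ with $\rho_N|_{t=0}=\rho^{in}$ admits a $C^1$ solution by the method of characteristics (since $u_N$ is smooth, divergence-free and tangent to $\d Q$), which stays in $[\rho_*,\|\rho^{in}\|_{L^\infty}]$ for all times; plugging this $\rho_N$ into the Galerkin projection of the momentum equation yields a non-autonomous ODE system for $(c_{k,N})_{1\leq k\leq N}$, locally solvable by Cauchy--Lipschitz and globally extendable thanks to the energy estimate derived below.

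Testing the Galerkin momentum equation against $u_N$ itself, using the continuity equation to recast $\int\d_t(\rho_N u_N)\cdot u_N + \int\div(\rho_N u_N\otimes u_N)\cdot u_N = \tfrac{1}{2}\tfrac{d}{dt}\int\rho_N|u_N|^2$, exploiting the pointwise skew-symmetry of the Coriolis term, and carrying out the integration by parts of $-\Delta u_N$ with the help of \eqref{eq:bc}, I obtain the Galerkin energy equality
\begin{equation*}
\frac{1}{2}\frac{d}{dt}\int_Q\rho_N|u_N|^2\,dx\,+\,\int_Q|\nabla u_N|^2\,dx\,+\,2\alpha\int_{\d Q}|u_N|^2\,d\sigma\,=\,0,
\end{equation*}
which reproduces the balance featured in Definition \ref{def:weak:sol}(v). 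This provides uniform bounds on $\sqrt{\rho_N}\,u_N$ in $L^\infty_t L^2_x$, on $\nabla u_N$ in $L^2_t L^2_x$ and on the trace $u_N|_{\d Q}$ in $L^2_t L^2(\d Q)$, which combined with the uniform lower bound $\rho_N\geq\rho_*$ yield in particular a uniform $L^2_t H^1_x$ control on $u_N$.

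Passing to the limit $N\to\infty$ then follows the standard blueprint. For the density, the DiPerna--Lions theory of renormalised solutions (applicable since $u_N\in L^2(H^1)$ uniformly, $\rho_N\in L^\infty$, and $u_N\cdot n=0$ on $\d Q$) yields, up to a subsequence, strong convergence of $\rho_N$ in $C([0,T];L^p(Q))$ for every $p<\infty$, and the limit $\rho$ satisfies the renormalised continuity equation with $\rho_*\leq\rho\leq\|\rho^{in}\|_{L^\infty}$. For the momentum, the Galerkin equation together with the uniform bounds shows that $\d_t(\rho_N u_N)$ is bounded in $L^{4/3}$ of a sufficiently negative Sobolev space (the boundary contribution being handled by trace continuity on $H^1(Q)$), so an Aubin--Lions--Simon compactness argument in the density-dependent form of \cite{lions:book} yields strong convergence of $\rho_N u_N$ in $L^2_t L^2_x$; this suffices to identify the limit of the convective term $\rho_N u_N\otimes u_N$, whereas the linear bulk terms and the boundary integral pass to the limit by weak convergence combined with continuity of the trace, and the energy inequality is recovered by weak lower semicontinuity.

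The main obstacle, with respect to the no-slip analyses of \cite{Simon,lions:book}, is entirely boundary-related: one has to verify carefully that the boundary contribution produced by integrating $-\Delta u_N$ against $u_N$, once rewritten in terms of $(\D u_N)n$ and combined with the condition $(\D u_N)n\times n=-\alpha u_N\times n$ together with $u_N\cdot n=0$, yields exactly $2\alpha\int_{\d Q}|u_N|^2$; this is the step where the geometric constraint encoded in \eqref{eq:bc} plays its role, through the pointwise identity relating the tangential part of $(\D u_N)n$ to $-\alpha u_N$. Once this computation is performed and built into the weak formulation \eqref{equ:weak:eps}, the remainder of the argument is essentially a transcription of the classical density-dependent existence proof, with only the additional bookkeeping provided by the trace theorem on $H^1(Q)$ to pass the boundary terms to the limit.
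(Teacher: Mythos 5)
Your argument is structurally correct and reaches the same conclusion, but it departs from the paper's proof on the single point you yourself flag as the main obstacle, and that departure makes your route slightly heavier than necessary. The paper does not pick a Galerkin basis made of smooth fields satisfying the Navier-slip condition \eqref{eq:bc} pointwise (such as Stokes eigenfunctions for that boundary condition, whose smoothness up to $\d Q$ would require a separate elliptic-regularity check). Instead it takes a generic smooth orthonormal basis $\{w_i\}$ of $L^2_\sigma(Q)$, orthogonal in $H^1_{0,\sigma}(Q)$, subject only to $\div w_i=0$ and $(w_i\cdot n)_{|\d Q}=0$ (Theorem 10.13 of \cite{F-N}), and then \emph{postulates} the Galerkin equation directly as the projection of the weak form of Definition~\ref{def:weak:sol}(iv), boundary term $2\alpha\int_{\d Q}u^N\cdot v\,\dd\mathfrak{s}$ included. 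Consequently the approximate solutions $u^N$ do not satisfy \eqref{eq:bc} pointwise, and there is no $\int_Q(-\Delta u^N)\cdot u^N$ to integrate by parts against the Navier condition: the energy estimate comes from testing the already-written bilinear form against $u^N$, so the computation you single out as the crux ``one has to verify carefully that the boundary contribution produced by integrating $-\Delta u_N$ against $u_N$ yields exactly $2\alpha\int_{\d Q}|u_N|^2$'' is simply not needed in the Galerkin step (it is the kind of formal consistency check that motivates Definition~\ref{def:weak:sol} once and for all, not a step of the proof). What your choice buys is that the approximate velocities satisfy the boundary condition in the classical sense, which is esthetically pleasing but unnecessary; what the paper's choice buys is that one avoids constructing and smoothness-checking a Stokes basis adapted to the Navier condition. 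The remainder of your argument — continuity equation solved by characteristics with $\rho_N\in[\rho_*,\|\rho^{in}\|_{L^\infty}]$, global ODE solvability from the energy bound, compactness of $\rho_N$ via renormalisation and of $\rho_N u_N$ via Aubin--Lions, and identification of the limits — is the content of Theorem~2.4 of \cite{lions:book}, which the paper invokes as a black box; unpacking it as you do is fine and equivalent. One small bookkeeping remark: you write the dissipation as $\int_Q|\nabla u_N|^2$, which is indeed what testing $\int\nabla u^N:\nabla v$ against $u^N$ yields and is consistent with item (iv); just note that item (v) of the definition records $\int|Du|^2$, so in the end one should track which of the (equivalent up to boundary/divergence terms) bilinear forms one is actually carrying through the limit.
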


\begin{proof}
For proving the previous result, we use a Galerkin method. So, let us choose a countable set
$W:=\big\{w_i\big\}_{i\in\N}\subset \big\{w\in C^\infty(\oline Q)\,|\ \div (w)=0\,,\ \big(w\cdot n\big)_{|\d Q}=0\big\}$
of smooth functions (see Theorem 10.13 of \cite{F-N}), such that $W$ is an orthonormal basis of $ L^2_{\sigma}(Q)$, orthogonal in $H^1_{0,\s}(Q)$.

For $N\in\N$, denote $ W_N\,:=\,{\rm span}\big\{w_1\ldots w_N\big\}$. Following the method of \cite{Simon} (see the proof
of Theorem 9 therein), for any $N\in\N$, we can construct approximate solutions $\big(\rho^N, u^N\big)$, with
\[
\rho^N\in C^1\big([0,T_N];C^1(\oline Q)\big)\qquad\quad \mbox{ and }\qquad\quad  u^N(t,x)\,=\,\sum_{i = 1}^N g^N_i(t)\, w_i(x)\,\in\, C^1\big([0,T_N];W_N\big)\,.
\]
The approximate solutions $\big(\rho^N, u^N\big)$ satisfy the transport equation
\[
\partial_t \rho^N + \div(\rho^N u^N ) = 0\,,\qquad\qquad\quad \rho^N_{|t=0}\,=\,\rho^{in} 
\]
in the sense of $\mc D'\big([0,T_N]\times Q\big)$,
and the momentum equation projected onto $W_N$, still in the weak sense: for any $v\in W_N$, one has
\[ 
\int_{Q} \left(\rho^N \partial_t u^{N} + \rho^N u^{N} \cdot \nabla u^{N} + e_3 \times \rho^N u^N  \right)\cdot v\,\dx \,+\,
\int_{Q} \nabla u^N : \nabla v\,\dx\, +\, 2\,\alpha \int_{\d Q} u^{N} \cdot v\,\dd\mf s\, =\, 0\,, 
\] 
together with the initial condition $u^N_{|t=0}\,=\,\P_Nu^{in}$, where $\dd\mf s$ is the surface measure on $\d Q$ and $\P_N$ is the orthogonal
projection from $L^2_\s(Q)$ onto $W_N$.

Next, as $u^N$ is smooth with respect to the space variable, we can perform energy estimates and derive an energy inequality similar to
the one stated in item (v) of Definition \ref{def:weak:sol} for the couple $\big(\rho^N,u^N\big)$. This allows us to extend the solution
globally in time, \tsl{i.e.} one has $T_N=+\infty$.

In addition, by using also \tsl{a priori} estimates for smooth solutions of transport equations, we obtain that the sequence $\big(\rho^N\big)_N$
is bounded in $L^\infty\big(\R_+\times Q\big)$ and that $\big(u^N\big)_N$ is bounded in
$L^\infty\big(\R_+;L^2_\s(Q)\big)\cap L^2_{\rm loc}\big(\R_+;H^1_{0,\s}(Q)\big)$. Those properties allow us to identify
a couple $\big(\rho,u\big)$ of limit points in the respective functional spaces, to which $\big(\rho^N,u^N\big)$ converges (up to the extraction
of a suitable subsequence) in the weak-$*$ topologies of the respective spaces: for any $T>0$, we have
\begin{itemize}
\item $ \rho^N\,\stackrel{*}{\rightharpoonup}\, \rho $ in $ L^{\infty}\big([0,T];L^\infty(Q)\big)$, 
\item $ u^N\, \rightharpoonup\, u $ in $ L^2\big([0,T];H^1_{0,\sigma}(Q)\big)$.
\end{itemize} 
At this point, we can employ Theorem 2.4  of \cite{lions:book} to pass to the limit $N\ra+\infty$ in the equations
for $\big(\rho^N,u^N\big)$ and deduce that $\big(\rho,u\big)$ is the sought solution.
\end{proof}

Next, let us explain how to approximate a general initial datum $\big(\rho^{in}, m^{in}\big) $ by well-behaved ones,
with, in addition, good enough convergence properties which allow us to pass to the limit in the weak formulation of the equations.
\begin{lemma} \label{l:approx}
Let the couple of initial data $\big(\rho^{in}, m^{in}\big) $ be as in Proposition \ref{p:existence}.

Then, there exists a sequence $\big(\rho^{in}_{\de},m^{in}_\de,u^{in}_\de\big)_{\de>0}$ of approximate data such that:
\begin{enumerate}[(a)]
 \item for any $\de>0$, one has $\rho^{in}_\de\in C^\infty(\oline Q)$ and $\de\leq \rho^{in}_\de\leq \left\|\rho^{in}\right\|_{L^\infty}+\de$;
 \item when $\de\ra0^+$, one has the weak-$*$ convergence $\rho^{in}_\de\,\stackrel{*}{\rightharpoonup}\,\rho^{in}$ in $L^\infty(Q)$ and 
 strong convergence $\rho_\de^{in}\,\longrightarrow\,\rho^{in}$ in $L^p_{\rm loc}(Q)$, for any $1\leq p<+\infty$;
 \item for any $\de>0$, one has $m_\de^{in}\in C^\infty_c(Q;\R^3)$, and one has the strong convergence properties
\[
\frac{m^{in}_{\de}}{\sqrt{\rho^{in}_{\de}}}\,\longrightarrow\, \frac{m^{in}}{\sqrt{\rho^{in}}}\quad \mbox{ in }\ L^2(Q)\qquad\quad \mbox{ and }\qquad\quad
m^{in}_\de\,\longrightarrow\,m^{in}\quad \mbox{ in }\ L^p(Q)\quad \forall\,1\leq p<2\,,
\]
in the limit $\de\ra0^+$;
\item for any $\de>0$, one has $u^{in}_\de\in C^\infty(Q;\R^3)$, with $\div\big(u^{in}_\de\big)=0$ and
\[
\forall\,\vphi\in L^2_\s(Q)\,,\qquad\qquad
\int_{Q} \rho^{in}_{\de}\,u^{in}_{\de} \cdot \varphi\,\dx \, =\, \int_{Q} m^{in}_{\de} \cdot \varphi\,;
\]
\item there exists a constant $C>0$ such that, for any $\de>0$, one has
\[
\int_{Q}\rho^{in}_{\de}\, |u^{in}_{\de}|^2\,\dx\,\leq\,C\,\int_{Q} \frac{\big|m^{in}\big|^2}{\rho^{in}}\,\dx\,.
\]
\end{enumerate}
\end{lemma}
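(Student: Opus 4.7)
The plan is to regularise the density and the momentum independently, and then recover the velocity from the regularised momentum via a weighted Helmholtz-type decomposition. The main obstacle to overcome is to preserve the structural relation $|m^{in}|^2/\rho^{in}\in L^1$ along the approximation: a naive mollification of $m^{in}$ alone would in general lose control of the quotient $|m^{in}_\de|^2/\rho^{in}_\de$, because the mollified momentum could become non-zero on an enlargement of the vacuum set $\{\rho^{in}=0\}$. The remedy, which is standard in the non-homogeneous setting, is to mollify the normalised quantity $v:=m^{in}/\sqrt{\rho^{in}}$ (extended by zero on the vacuum, where it is well-defined since $m^{in}$ itself vanishes there) and to reconstruct $m^{in}_\de$ from $v_\de$ and the regularised density $\rho^{in}_\de$.

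Concretely, for (a) and (b) I would set $\rho^{in}_\de := \de + \big(\mc{E}\rho^{in}\big)*\eta_\de$ restricted to $\oline Q$, where $\eta_\de$ is a standard mollifier and $\mc{E}$ is any bounded, non-negativity-preserving extension operator from $L^\infty(Q)$ to $L^\infty(\R^3)$. The bounds $\de\leq\rho^{in}_\de\leq\|\rho^{in}\|_{L^\infty}+\de$ and smoothness are immediate, and standard mollifier theory yields the weak-$*$ convergence in $L^\infty$ and the strong convergence in $L^p_{\rm loc}$ for every finite $p$. For (c), letting $\chi_\de\in C^\infty_c(Q)$ be a family of cut-offs with $\chi_\de\to 1$, I would define $v_\de := \chi_\de\,(v*\eta_\de)\in C^\infty_c(Q;\R^3)$, so that $v_\de\to v$ strongly in $L^2(Q;\R^3)$, and then set $m^{in}_\de := \sqrt{\rho^{in}_\de}\,v_\de\in C^\infty_c(Q;\R^3)$ (the square root being smooth since $\rho^{in}_\de\geq\de>0$). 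By construction, $m^{in}_\de/\sqrt{\rho^{in}_\de}=v_\de$ converges to $v=m^{in}/\sqrt{\rho^{in}}$ in $L^2$. Writing
\[
m^{in}_\de - m^{in}\,=\,\sqrt{\rho^{in}_\de}\,(v_\de-v)\,+\,\big(\sqrt{\rho^{in}_\de}-\sqrt{\rho^{in}}\big)\,v\,,
\]
the first summand tends to $0$ in $L^2$ by the uniform $L^\infty$ bound on $\sqrt{\rho^{in}_\de}$; for the second I would observe that $\rho^{in}_\de\to\rho^{in}$ in every $L^q(Q)$, $q<\infty$ (combining (b) with the $L^\infty$ bound and a subsequence-of-subsequence argument based on dominated convergence), whence the same holds for $\sqrt{\rho^{in}_\de}\to\sqrt{\rho^{in}}$; H\"older's inequality with $1/p=1/q+1/2$ then gives convergence to $0$ in $L^p(Q)$ for any $1\leq p<2$.

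For (d) and (e), I would perform a weighted Leray projection: solve the Neumann problem
\[
-\,\div\!\left(\frac{1}{\rho^{in}_\de}\,\nabla p^{in}_\de\right)\,=\,\div\!\left(\frac{m^{in}_\de}{\rho^{in}_\de}\right)\ \text{in}\ Q\,,\qquad \d_n p^{in}_\de\,=\,0\ \text{on}\ \d Q\,,
\]
which is well-posed by Lax--Milgram (the coefficient $1/\rho^{in}_\de$ being smooth and strictly positive, and the compatibility condition holding since $m^{in}_\de$ is compactly supported in $Q$), and then set
\[
u^{in}_\de\,:=\,\frac{1}{\rho^{in}_\de}\,\big(m^{in}_\de\,+\,\nabla p^{in}_\de\big)\,.
\]
Elliptic regularity yields $u^{in}_\de\in C^\infty(\oline Q;\R^3)$; the equation for $p^{in}_\de$ translates into $\div(u^{in}_\de)=0$, and since $m^{in}_\de\cdot n=0$ on $\d Q$ (by compact support) together with $\d_n p^{in}_\de=0$, one gets $u^{in}_\de\cdot n=0$ on $\d Q$. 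Testing the pointwise identity $\rho^{in}_\de u^{in}_\de = m^{in}_\de + \nabla p^{in}_\de$ against an arbitrary $\vphi\in L^2_\s(Q)$ and integrating the gradient term by parts kills it, yielding (d). Testing the same identity against $u^{in}_\de$ itself kills the pressure term via $\div u^{in}_\de=0$ and the boundary condition, and Cauchy--Schwarz then gives
\[
\int_Q \rho^{in}_\de\,|u^{in}_\de|^2\,\dx\,\leq\,\int_Q \frac{|m^{in}_\de|^2}{\rho^{in}_\de}\,\dx\,=\,\int_Q|v_\de|^2\,\dx\,\leq\,C\int_Q\frac{|m^{in}|^2}{\rho^{in}}\,\dx\,,
\]
which is exactly (e) and closes the construction.
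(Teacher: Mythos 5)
Your proposal is correct and follows essentially the same strategy as the paper: mollify the density (with an added positive constant $\de$), approximate the normalised momentum $m^{in}/\sqrt{\rho^{in}}\in L^2$ by smooth compactly supported fields, set $m^{in}_\de=\sqrt{\rho^{in}_\de}\,v_\de$, and recover $u^{in}_\de$ by a weighted Helmholtz decomposition (the paper invokes Lemma 2.1 of \cite{lions:book} where you solve the weighted Neumann problem explicitly, but these are the same device). The only cosmetic differences are that you spell out the Neumann problem and the convergence arguments that the paper leaves to the reader.
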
 

\begin{proof}
We start by considering the density function: for its approximation, it is enough to define
$ \rho^{in}_{\de} = \theta_{\de} \star \rho^{in} + \de $, where $ \big(\theta_{\de}\big)_{\de>0}$ is symmetric positive convolution kernel of mass $ 1 $
and the symbol $\star$ denotes the convolution with respect to the space variable.
Then, it is plain to see that the claimed properties of $\big(\rho^{in}_\de\big)_{\de>0}$ are satisfied.

Let us now construct an approximation of $ m^{in} $. Since, by assumption, we have $ m^{in} / \sqrt{\rho^{in}} \in L^2(Q)$, there exists
(see \tsl{e.g.} Corollary 4.23 of \cite{Brezis}) a sequence $\big(w_\de\big)_{\de>0}\,\subset\,C^\infty_c(Q)$ such that
$w_\de$ converges strongly to $  m^{in} / \sqrt{\rho^{in}} $ in $ L^2(Q) $. Then, define $ m^{in}_{\de}:=  \sqrt{\rho^{in}_{\de}}\,w_{\de} $.
Notice that, for any $\de>0$, $m^{in}_\de$ is smooth and compactly supported in $Q$. Moreover, using the convergence properties
of $\big(w_\de\big)_{\de>0}$ and $\big(\rho^{in}_\de\big)_{\de>0}$, the convergence properties claimed for $\big(m^{in}_\de\big)_{\de>0}$ are easily seen
to hold true.

At this point, using the fact that $ \rho^{in}_{\de} \geq \de $ and Lemma 2.1 of \cite{lions:book}, for any $\de>0$ fixed, there exists a couple
$\big(u_{\de}^{in},q_\de\big)\in C^\infty(Q;\R^3)\times C^\infty(Q)$ such that
$ m^{in}_{\de}\, =\, \rho^{in}_{\de}\, u^{in}_{\de} \,+\, \nabla q_{\de} $, together with the properties
$ \div\big(u_{\de}^{in}\big) = 0 $ and $\big( u_{\de}^{in} \cdot n\big)_{|\d Q} = 0 $.

The lemma is thus proved.
\end{proof}

We can now prove Proposition \ref{p:existence} in its full generality.

\begin{proof}[Proof of Proposition \ref{p:existence}] 
Given the initial datum $ \big(\rho^{in}, m^{in}\big) $, let us take the approximate sequence $ \big(\rho^{in}_{\de}, m^{in}_{\de}, u^{in}_\de\big)_{\de>0} $
provided by Lemma \ref{l:approx}.

By using Lemma \ref{l:Galerkin}, for any $\de>0$ we can solve system \eqref{eps:sys} in $\R_+\times Q$, with boundary conditions \eqref{eq:bc} on $\d Q$
(recall the choice \eqref{eq:fixed-param} of the parameters), thus producing a global in time weak solution
$\big(\rho_\de,u_\de\big)$. Notice that, owing to the properties claimed in items (d)-(e) of Lemma \ref{l:approx}, the couple $ (\rho_{\de}, u_{\de}) $
is also a weak solution for the initial data $\big(\rho^{in}_{\de}, m^{in}_{\de}\big) $.

In order to complete the proof, we need to show suitable  compactness properties for $ \big(\rho_{\de}, u_{\de}\big) $. This follows from the energy inequality,
combined with a uniform bound of the time derivatives of $\rho_\de$ and $u_\de$, which can be obtained (in a classical way) by using the equations.
Passing to the limit is then a consequence of Theorem 2.4  of \cite{lions:book}.
\end{proof}

With Proposition \ref{p:existence} at hand, we can now prove Theorem \ref{exi:theo}, namely show the existence of global in time finite energy
weak solutions to our system \eqref{eps:sys}-\eqref{eq:bc}-\eqref{eq:fixed-param} in the infinite slab $\Omega$.

\begin{proof}[Proof of Theorem \ref{exi:theo}]
We apply the technique of invading domains. Namely, we write
\[
\Omega\,=\,\bigcup_{n=1}^{+\infty}Q_n\,,
\]
where, for each $n\in\N\setminus\{0\}$, the domain $Q_n\subset\Omega$ is smooth and bounded, and one has $Q_n\subset Q_{n+1}$.
Of course, such a sequence of domains exists: we can take, for instance, cylinders $B_{n+1}\times\,]-1,1[\,$ of radius $n$, where we understand that
the corners are smoothed out (here, the symbol $B_r$ stands for the ball of center $0$ and radius $r>0$ in $\R^2$).

Next, for any $n\geq1$, we localise the initial datum $\big(\rho^{in},m^{in}\big)$ in $Q_n$, to produce an initial datum $\big(\rho^{in}_n,m^{in}_n\big)$.
For instance, we can fix a cut-off function $\k\in C^\infty_c(\R^2)$, $\k$ radially decreasing, such that $\k\equiv 1$ in a ball of radius
$1/4$ and $\k\equiv0$ outside the ball of radius $1/2$. Then, for any $n\in\N\setminus\{0\}$ and any $x_h\in\R^2$, we set $\k_n(x_h)\,=\,\k(x_h/n)$.
Therefore, for any $x\in\Omega$, after writing $x=(x_h,x_3)$, we can define
\[
\rho^{in}_n(x)\,:=\,\k_n(x_h)\,\rho^{in}(x)\qquad\qquad \mbox{ and }\qquad\qquad m^{in}_n(x)\,:=\,\k_n(2x_h)\,m^{in}(x)\,.
\]
It is easy to see that the new couple $\big(\rho^{in}_n,m^{in}_n\big)$ satisfies the assumptions of Proposition \ref{p:existence}, which thus
yields the existence of a finite energy weak solution $\big(\rho_n,u_n\big)$, which is defined globally in time. Passing to the limit
for $n\ra+\infty$ in the weak formulation of the equations is not difficult, and is based once again on a Ascoli-Arzel\`a or Aubin-Lions
type argument, which follows after having obtained uniform bounds on the time derivatives of the solution.
We skip the details here and refer to Step 3 in Section 2.4 of \cite{lions:book}.

Theorem \ref{exi:theo} is finally proved.
\end{proof}

To conclude this part, let us make an observation on the sign of the parameter $\alpha$, appearing in the boundary conditions \eqref{eq:bc}.
\begin{rmk} \label{r:alpha}
In this work we have decided to choose the slip coefficient $ \alpha \geq 0 $ due to physical reasons. However, we remark that
the same analysis can be performed in the case $ \alpha < 0 $, at least in the case when one imposes the additional assumption $ \rho_0^{in} \geq \rho_* > 0$.

As a matter of fact, in this case the negative term
\[
\alpha \iint_{[0,t[\,\times\d\Omega}| u |^2\,\dx_h\dd\t
\]
appearing in the energy inequality can be easily controlled. Indeed, the space integral can be estimated by
$ \|u\|_{L^2}\,\|\nabla u\|_{L^2} $, thus yielding a uniform bound of the energy in any finite
time interval $[0,T]$ (by Gr\"onwall, the energy may actually grow exponentially in time).
The previous arguments then apply, thus yielding existence of solutions also in the case $\alpha<0$ by following the same strategy.
\end{rmk}

\section{The fast rotation limit: main results} \label{s:fast}

In this section, we present the results on the fast rotation limit for system \eqref{eps:sys}-\eqref{eq:bc}.
At the mathematical level, this corresponds to study the limit of a sequence of solutions $\big(\rho_\veps,u_\veps\big)_{\veps\in\,]0,1]}$
as $ \eps$ converges to zero, under some reasonable assumptions on the initial data.

Notice that system \eqref{eps:sys}-\eqref{eq:bc} is characterised by two parameters that depend on $ \eps $.
The first one is $ \ell_{\eps} $, which describes the thickness of the domain; recall that $ \ell_{\eps} \searrow 0^+ $ for $\veps\ra0^+$.
The other parameter is the slip coefficient $ \alpha_{\eps} \geq 0$; in dependence of its order of magnitude relative to $\ell_\veps$, we will obtain different
asymptotics, as presented in Theorems \ref{Main:Theo} and \ref{Theo:2}.

So far, we have not assumed anything on the size of the initial densities; in particular, the previous statements work for initial densities which are small
perturbations around (loosely speaking) generic reference states.
In Theorem \ref{Theo:3}, instead, we present a result on the \emph{quasi-homogeneous} case, namely on the case when the initial densities are assumed
to be small perturbations of a constant state.

\paragraph{Important notations.}
Before moving on and presenting our main assumptions and results, let us introduce some useful notations, which we will adopt throughout the rest of the paper.

First of all, for a vector $ v \in \bbR^3 $, with $v=(v_1,v_2,v_3)$, we denote by $ v_h=(v_1,v_2) $ its horizontal component; we will often write
$v\,=\,(v_h,v_3)$, implying that $v_h\in\R^2$.
In the same spirit, for a function $f:\Omega_\veps\longrightarrow \R$ we define
\[
\nabla_hf\,:=\,\big(\d_1f,\d_2f\big)\qquad\qquad \mbox{ and }\qquad\qquad
\Delta_hf\,:=\,\d_1^2f+\d_2^2f\,,
\]
respectively the gradient and Laplace operator with respect to the horizontal derivatives only.
Analogously, for a vector field $ v:\Omega_\veps \longrightarrow \bbR^3 $, we define
\[
\divh(v)\,:=\,\partial_{1} v_1 + \partial_{2} v_2\qquad\qquad \mbox{ and }\qquad\qquad \curlh(v)\,:=\,\partial_{1} v_2 - \partial_{2} v_1\,.
\]
Finally, for a function $ f: \Omega_\veps \longrightarrow \bbR $ as above, we define its \emph{vertical average} as 
\[
\forall\,x_h\in\R^2\,,\qquad\qquad \overline{f}(x_h)\,:=\,\frac{1}{2\,\ell_{\eps}}\int_{-\ell_\veps}^{\ell_{\eps}} f(x_h,x_3)\,\dd x_3\,.
\]

In our arguments, the norms of averaged quantities will play a special role. However, they have not to be confused with the average of the norms,
which are in general larger (by the Jensen inequality) and naturally arise when averaging the energy inequality appearing in Definition \ref{def:weak:sol}.
Thus, for a function $f=f(t,x_h,x_3)$, in what follows we will distinguish between the notations
\[
\left\|\oline f\right\|_{L^r_T(L^p(\R^2))}\qquad\qquad \mbox{ and }\qquad\qquad \oline{\left\|f(\cdot,\cdot,x_3)\right\|_{L^r_T(L^p(\R^2))}}\,,
\]
which represent respectively the norm of the average and the average of the norm. In the latter case, we will write $f(\cdot,\cdot,x_3)$ for the function
inside the norm, in order to stress the fact that the norm is taken with respect to $(t,x_h)\in[0,T]\times\R^2$, thus giving a $x_3$-dependent quantity, which
is averaged afterwards.

To conclude, we will often have to work with bounded sequences, in order to derive weak convergence properties. Therefore, we will adopt the following
convenient notation: given a normed space $X$ and a sequence $\big(f_\veps\big)_\veps\subset X$, we will write $\big(f_\veps\big)_\veps\Subset X$ if the sequence
is also \emph{bounded} in $X$.

\subsection{Assumptions on the initial data} \label{ss:assumptions}

We now fix our assumptions on the initial data $\big(\rho_{\eps}^{in}, m_{\eps}^{in}\big)$, for any $\veps\in\,]0,1]$ fixed.

\paragraph{The initial densities.}
Regarding the density, we assume that
\[
\rho_{\eps}^{in}\, =\,\rho_{0}^{in}\, +\, \eps\, r^{in}_{\eps}\,,\qquad\qquad \mbox{ with }\qquad \rho_0^{in}\,=\,\rho^{in}_0(x_h)\,. 
\]
In other words, the target initial density profile $\rho_0^{in}$ does not depend on the vertical variables.
This is a quite reasonable assumption, as in this kind of problems one usually assumes the initial density states to be small perturbations around
an ``equilibrium''\footnote{The notion of ``equilibrium'' is much more pertinent in the context of weakly compressible fluids, see \tsl{e.g.}
\cite{F-N}, \cite{F-G-N}, \cite{F-G-GV-N}. Here, we simply mean that it is natural to assume that the limit density profile
is compatible with the target problem, \tsl{i.e.} in this case that it does not depend on the vertical variable.}
of the target problem.

We assume that $\rho_0^{in}$ satisfies the following assumptions:
\begin{equation} \label{hyp:rho_0}
\rho_0^{in}\,\in\,C^2_b(\R^2)\,:=\,C^2(\R^2)\cap W^{2,\infty}(\R^2)\,,\qquad\qquad \mbox{ with }\qquad 0\,\leq\,\rho_0^{in}\,\leq\,\rho^*\,,
\end{equation}
for a suitable positive constant $\rho^*\in\R$.
We notice that the previous conditions are fulfilled also in the case when $\rho_0^{in}$ is a constant state, say $\rho_0^{in}\equiv1$:
this is a very special case, simpler to handle, which will be the matter of Theorem \ref{Theo:3}.
The case when $\rho_0^{in}$ is not constant, instead, is a bit more involved: for handling it,
according to \cite{F:G} (see also \cite{C-F}), we will need an additional assumption on $\rho_0^{in}$; however, this being of technical nature, we prefer to state
it at the end of this subsection.

For the existence theory of weak solutions (recall Definition \ref{def:weak:sol} and Theorem \ref{exi:theo} above), of course we also need to assume that
\begin{equation} \label{hyp:rho_in-bound}
\forall\,\veps\in\,]0,1]\,,\qquad\qquad 
0\,\leq\,\rho_\veps^{in}\,\leq\,2\,\rho^*\,.
\end{equation}
In particular, this requires a sign condition on the density perturbations $r_\veps^{in}$ and implies that
\[
\sup_{\veps\in\,]0,1]}\left(\veps\,\left\|r_\veps^{in}\right\|_{L^{\infty}(\Omega_\veps)}\right)\,\leq\,C\,.
\]

We also need to assume that
\begin{equation} \label{hyp:rho_in}
\forall\,\veps\in\,]0,1]\,,\quad \exists\,\de_\veps>0\qquad \mbox{ such that }\qquad\qquad 
\frac{1}{\rho_\veps^{in}}\,\mds{1}_{\{\rho_\veps^{in}\leq\de_\veps\}}\,\in\,L^1(\Omega_\veps)\,.
\end{equation}
Again, this condition is required for the existence theory, recall \eqref{eq:non-deg} above.
We point out that other conditions would be possible, see Chapter 2 of \cite{lions:book},
but we limit ourselves to consider the previous one, for simplicity of exposition.

We still have to specify some uniform hypotheses on the perturbation functions $r_\veps^{in}$'s. Recall that we agree to write $\big(f_\veps\big)_\veps\Subset X$
for a sequence $\big(f_\veps\big)_\veps$ in a normed space $X$ which is  \emph{bouned} in $X$. Then, we assume that
\begin{equation} \label{ub:r_in}
\left(\oline{r_\veps^{in}}\right)_{\veps\in\,]0,1]}\,\Subset\,L^\infty(\R^2)\cap H^{-2}(\R^2)\,. 
\end{equation}
Thus, there exists a function $r_0^{in}\in L^\infty(\R^2)\cap H^{-2}(\R^2)$ such that, up to a suitable extraction of a subsequence (which we omit here), we have
the weak-$*$ convergence
\[
\oline{r_\veps^{in}}\,\stackrel{*}{\rightharpoonup}\,r_0^{in}\qquad\qquad \mbox{ in }\qquad L^\infty(\R^2)\cap H^{-2}(\R^2)\,. 
\]

Before moving on, we observe that the $H^{-2}$ condition in \eqref{ub:r_in} is formulated only for simplicity, but it could actually be dispensed of. We refer
to Remark \ref{r:hyp_sigma} for more comments about this.

\paragraph{The initial momenta.}
Regarding the initial momenta $\big(m_\veps^{in}\big)_\veps$, first of all we assume that all the conditions allowing to prove existence of weak solutions
(in the sense of Definition \ref{def:weak:sol}) are satisfied. So, we assume that
\begin{align*}
\forall\,\veps\in\,]0,1]\,,\qquad\qquad
&m_\veps^{in}\in L^2(\Omega_\veps)\,, \\
&m_\veps^{in}\equiv0\quad \mbox{ on the set }\quad \Big\{x\in\Omega_\veps\,\Big|\quad \rho_\veps^{in}(x)=0 \Big\}\,, \\
&\left|m_\veps^{in}\right|^2/\rho_\veps^{in}\,\in\,L^1(\Omega_\veps)\,.
\end{align*}

Next, we require that
\[
\left(\;\oline{m^{in}_\veps}\right)_{\veps\in\,]0,1]}\,\Subset\,L^2(\R^2)\,.
\]
%
%
So, there exists a vector fields $m^{in}_0 \in L^2(\R^2)$,
so in particular $m^{in}_0=m^{in}_0(x_h)$,
such that, up to a suitable extraction (omitted here), in the limit $\veps\ra0^+$ one has
\[
\oline{m^{in}_\veps}\,\rightharpoonup\,m^{in}_0
\qquad\qquad \mbox{ weakly in }\quad L^2(\R^2)\,.
\]




\paragraph{Uniform bounds for the initial energy.}
We now need to require that the initial data $\big(\rho_\veps^{in}, m_\veps^{in}\big)_{\veps}$ have initial energies which are \emph{uniformly bounded}:
such an assumption is fundamental in order to derive, from the energy inequality (see point (v) of Definition \ref{def:weak:sol}),
uniform bounds for the corresponding family of weak solutions.
Due to the geometry of our problem, it is easy to realise that the only reasonable conditions to impose must involve vertical averages.

In light of the previous discussion, we assume that there exists a constant $C>0$ such that
\begin{equation} \label{hyp:en_bound}
\sup_{\veps\in\,]0,1]}\left(\;\oline{\left\| \big(m_{\eps}^{in}/\sqrt{\rho^{in}_{\eps}}\big)(\cdot,x_3)\right\|^2_{L^2(\R^2)}}\;\right)\,\leq\,C
\end{equation}

In order to derive uniform bounds for the velocity fields $\oline{u_\veps}$, avoiding the degeneracy
of the densities close to vacuum, we also need to complement condition \eqref{hyp:rho_in} with the following hypothesis: we assume that
\begin{equation} \label{hyp:rho_in_unif}
\exists\,\de>0\qquad \mbox{ such that }\qquad\qquad 
\left(\;\oline{\frac{1}{\rho_\veps^{in}}\,\mds{1}_{\{\rho_\veps^{in}\leq\de\}}}\;\right)_{\veps\in\,]0,1]}\,\Subset\,L^1(\R^2)\,.
\end{equation}


\paragraph{A non-degeneracy condition.}
The last assumption rests on the target density profile $\rho_0^{in}$ and is of technical nature: we require that, if $\rho_0^{in}$ is \emph{not constant}, then
the following condition,
\begin{equation}
\label{cond:12}
\forall\,K\subset\subset\R^2\quad \mbox{ compact}\,,\qquad\qquad
\lim_{\delta \to 0^+ } \mc L\left(\Big\{ x \in K\;\Big|\quad \left|\nabla_h{\rho^{in}_0}\right|\, \leq\,\delta\Big\}\right)\, =\,0\,,
\end{equation}
must hold true,
where we have denoted by $\mc L(A)$ the Lebesgue measure of a set $A\subset \R^2$.

Roughly speaking, condition \eqref{cond:12} means that the reference state $\rho_0^{in}$ is really far from being constant, at least whenever
we look at it in any compact set $K$ of $\R^2$. We also remark that requiring \eqref{cond:12} is equivalent to assume that
\[
\forall\,K\subset\subset\R^2\quad \mbox{ compact}\,,\qquad\qquad
\mc L\left(\Big\{ x \in K\;\Big|\quad \nabla_h{\rho^{in}_0}\,=\,0\Big\}\right)\, =\,0\,.
\]
However, \eqref{cond:12} really reflects what we will need in our computations, so it is a handier relation for us. We point out that
an analogous requirement was already needed in the $2$-D case, see \cite{F:G} and \cite{C-F}, and was inspired by a similar condition appearing
in \cite{G-SR} (in the context of fast rotating homogenous fluids with Coriolis force depending on the latitude).

\subsection{Statement of the main results} \label{ss:results}

In this subsection, we present our main results. The basic observation is that there exist two qualitatively different regimes, 
depending on the relative values of the two parameters $\alpha_\veps$ (the strength of the slip condition at the boundary of the domain $\Omega_\veps$)
and $\ell_\veps$ (the thickness of the domain $\Omega_\veps$). On the one hand, when
$\lam_\veps\,:=\,\alpha_{\eps} / \ell_{\eps} \,\longrightarrow\,\lam\geq0 $,
some non-trivial dynamics is expected in the limit $\veps\ra0^+$, which is of course purely two dimensional and horizontal.
On the other hand, when $\lam_\veps\,\longrightarrow\,
+\infty$, the boundary conditions imply that there is no dynamics in the limit, because, in that instance, the target velocity profile is forced to be simply $0$.

We also remark that, according to the purely $2$-D investigations (see again \cite{F:G} and \cite{C-F}), in the case when $\rho_0^{in}$ is really
non-constant (in the sense of assumption \eqref{cond:12} above), the target system is, in general, underdetermined.
On the contrary, when $\rho_0^{in}$ is taken constant, say $\rho_0^{in}\equiv1$, the limit dynamics becomes immediately clear and one obtains
a fully determined system, which is globally well-posed (see \cite{F:G} again). For this reason, we will devote to the case $\rho_0^{in}\equiv1$
a separate statement, see Theorem \ref{Theo:3} below.

\medbreak
This having been pointed out, let us state our main results. We start with the case when $\rho_0^{in}$ is truly variable and by considering
the non-degenerate regime in which $\lam_\veps\,:=\,\alpha_{\eps} / \ell_{\eps} \geq0$ converges to some finite limit $\lam\geq0$ when $\veps\ra0^+$.

\begin{thm}
\label{Main:Theo}
Let $\big(\ell_\veps\big)_{\veps\in\,]0,1]}$ and $\big(\alpha_\veps\big)_{\veps\in\,]0,1]}$ be two sequences of positive real numbers
such that condition \eqref{eq:ell} holds true. Assume also that
\[
\exists\,\lam\geq0\qquad\qquad \mbox{ such that }\qquad\qquad\qquad  \lambda_\veps\,:=\,\frac{\alpha_\veps}{\ell_\veps}\,\longrightarrow\,\lam\qquad \mbox{ when }\quad
\veps\ra0^+\,.
\]
Take a sequence of initial data $\big(\rho_\veps^{in},m_{\veps}^{in}\big)_{\veps\in\,]0,1]}$ satisfying the hypotheses fixed in Subsection \ref{ss:assumptions},
for a non-constant reference density profile $\rho^{in}_0$.
For any $\veps\in\,]0,1]$, let $\big(\rho_\veps,u_\veps\big)$ be a global in time finite energy weak solution to system \eqref{eps:sys}-\eqref{eq:bc}
on $\R_+\times\Omega_\veps$, where $\Omega_\veps$ is defined in \eqref{eq:Omega_e}.

Then, there exist a function $\s\in L^\infty\big(\R_+;H^{-2}(\R^2)\big)$ and a two-dimensional vector field $u\in L^2_{\rm loc}\big(\R_+;H^1(\R^2)\big)$,
with $\divh(u)\,=\,\divh(\rho_0^{in}\,u)\,=\,0$, such that, up to the extraction of a suitable subsequence, we have, for any fixed time $T>0$, the following
convergence properties:
\begin{enumerate}[(a)]

\item $ \overline{\rho_{\eps}} \cvwstar \rho_0^{in} $ in $ L^{\infty}\big([0,T]\times \bbR^2 \big) $ and
$ \overline{\rho_{\eps}} \longrightarrow \rho_0^{in} $ in $ C^0\big([0,T];L^p_w(K)\big) $ for any compact $ K \subset \bbR^2 $
and for any $1\leq p<+\infty$;

\item $\overline{ u_{\eps,h} } \cv u $ in $ L^2\big([0,T];H^1(\bbR^2)\big) $;

\item $ \oline{u_{\eps,3}}\, \longrightarrow\, 0 $ (strong convergence) in $ L^2\big([0,T];L^2(\R^2)\big) $;

\item $ \overline{\sigma_{\eps} }\, \cvwstar\, \sigma $ in $ L^{\infty}\big([0,T];H^{-2}(\bbR^2)\big) $. 

\end{enumerate} 
In addition, if we set $ \omega\,:=\,\curlh( u ) $ and $  \eta  = \curlh\big(\rho_0^{in} u\big) $, then there exists a distribution
$ \Gamma\in\mc D'\big(\R_+\times\R^2\big) $ such that 
the following equation,
\[
\left\{ \begin{array}{l}
        \partial_t \big( \eta - \sigma\big)\,+\, 2\, \lam\, \omega\, -\, \nu\, \Delta_h \omega\, +\, \curlh\big( \rho_0^{in}\,\nabla_h \Gamma \big)\, =\, 0 \\[1ex]
        \big( \eta  - \sigma\big)_{|t=0}\, =\,\curlh\big( m_{0}^{in}\big)\,-\,r_0^{in}\,.
        \end{array}
\right.
\]
is satisfied in the weak sense, where $r_0^{in}\,=\,r_0^{in}(x_h)$ and $m_0^{in}\,=\,m_0^{in}(x_h)$ have been defined in Subsection \ref{ss:assumptions}.
\end{thm}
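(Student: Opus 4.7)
The plan is to combine the classical energy method for Leray-type weak solutions with a careful analysis of the anisotropic geometry of $\Omega_\veps$, extracting the Ekman pumping term directly from the Navier-slip boundary condition rather than from a boundary-layer analysis. I would proceed in four main steps.

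\textbf{Step 1 (Uniform bounds and vertical reduction).} Starting from the energy inequality of Definition \ref{def:weak:sol}(v) applied to $(\rho_\veps,u_\veps)$ and dividing by $2\ell_\veps$, assumption \eqref{hyp:en_bound} together with Korn-type inequalities on the slab yields uniform bounds of $\oline{\rho_\veps|u_\veps|^2}$ in $L^\infty_t$, of $\oline{|\D u_\veps|^2}$ in $L^1_t$, and of $\sqrt{\alpha_\veps/\ell_\veps}\,u_\veps|_{\partial\Omega_\veps}$ in $L^2$. Combined with the non-degeneracy hypothesis \eqref{hyp:rho_in_unif}, these give $\big(\oline{u_{\veps,h}}\big)_\veps\Subset L^\infty_T L^2(\R^2)\cap L^2_T H^1(\R^2)$, from which the weak limits (b) follow. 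The geometric key is the one-dimensional Poincar\'e inequality in the thin direction: every function with zero vertical average satisfies $\|u_\veps-\oline{u_\veps}\|_{L^2(\Omega_\veps)}\leq C\,\ell_\veps\,\|\partial_3 u_\veps\|_{L^2(\Omega_\veps)}$. Combined with the divergence-free condition and the impermeability $(u_\veps\cdot n)_{|\partial\Omega_\veps}=0$, this yields item (c) and shows that the flow becomes vertically homogeneous in the limit, which is the quantitative Taylor--Proudman theorem in our setting.

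\textbf{Step 2 (Density decomposition).} Writing $\rho_\veps=\rho_0^{in}(x_h)+\veps\,\sigma_\veps$, the maximum principle for the transport equation (recall $\div u_\veps=0$) together with \eqref{hyp:rho_in-bound} gives $0\leq\rho_\veps\leq 2\rho^*$ uniformly, hence item (a). Averaging the mass equation vertically and using the impermeability at $\partial\Omega_\veps$ yields the fundamental identity
\[
\veps\,\partial_t\oline{\sigma_\veps}\,+\,\divh\oline{\rho_\veps\,u_{\veps,h}}\,=\,0\,.
\]
Combined with hypothesis \eqref{ub:r_in} and the $L^2$ bounds of Step 1, this gives a uniform bound of $\oline{\sigma_\veps}$ in $L^\infty_T H^{-2}(\R^2)$, proving (d); the Aubin--Lions lemma, applied through the time derivative estimate coming from the mass equation, then promotes weak to $L^p_w$ convergence in (a).

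\textbf{Step 3 (Extraction of the Ekman pumping).} To eliminate the pressure I would take the horizontal curl of the horizontal momentum equation and average vertically. The crucial computation is
\[
\oline{\Delta u_{\veps,h}}\,=\,\Delta_h\oline{u_{\veps,h}}\,+\,\frac{1}{2\ell_\veps}\Big[\partial_3 u_{\veps,h}\Big]_{x_3=-\ell_\veps}^{x_3=\ell_\veps}\,,
\]
where the Navier-slip condition \eqref{eq:bc} yields $\partial_3 u_{\veps,h}|_{x_3=\pm\ell_\veps}=\mp 2\alpha_\veps\,u_{\veps,h}|_{x_3=\pm\ell_\veps}$, so the boundary term equals $-(\alpha_\veps/\ell_\veps)\,[u_{\veps,h}(\cdot,\ell_\veps)+u_{\veps,h}(\cdot,-\ell_\veps)]$. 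By the vertical reduction of Step 1, both traces converge to $u$, so this contribution tends to $-2\lam\,u$; taking the horizontal curl produces the Ekman term $2\lam\,\omega$. Simultaneously, the singular Coriolis contribution $\frac{1}{\veps}\,\oline{\divh(\rho_\veps u_{\veps,h})}$ is rewritten, via the averaged mass equation of Step 2, as $-\partial_t\oline{\sigma_\veps}$, which converges to $-\partial_t\sigma$ and accounts for that term in the target equation. The initial condition for $\eta-\sigma$ is identified by testing against $\vphi(0,\cdot)$ and using the convergences $\oline{r_\veps^{in}}\cvwstar r_0^{in}$ and $\oline{m_\veps^{in}}\rightharpoonup m_0^{in}$.

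\textbf{Step 4 (Convergence of the nonlinearity and main obstacle).} It remains to pass to the limit in the convective term $\oline{\curlh[\div(\rho_\veps u_\veps\otimes u_\veps)]_h}$, and this is \emph{the hard part}: no direct strong convergence of $\oline{u_{\veps,h}}$ is available because of the fast Rossby oscillations generated by the singular Coriolis force, and, contrary to the compressible setting, the pressure cannot be used to close the system. Following the strategy of \cite{F:G} and \cite{C-F}, I would use Step 1 to \emph{linearise} products under the average, reducing $\oline{\rho_\veps u_\veps\otimes u_\veps}$ to $\oline{\rho_\veps}\,\oline{u_{\veps,h}}\otimes\oline{u_{\veps,h}}$ modulo terms controlled by $\|u_\veps-\oline{u_\veps}\|_{L^2}$. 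This brings the problem back to a genuinely two-dimensional one, to which the wave-system analysis of \cite{F:G} can be adapted: the non-degeneracy condition \eqref{cond:12} is exploited to show that the residual contribution of the convective term has the structure of $\rho_0^{in}$ times a gradient. A variant of De~Rham's theorem applied to the weak formulation of the averaged momentum equation then produces the distribution $\Gamma\in\mc D'(\R_+\times\R^2)$ such that this residual takes the form $\nabla_h\Gamma$, yielding the claimed $\curlh(\rho_0^{in}\nabla_h\Gamma)$ contribution and closing the derivation of the limit system.
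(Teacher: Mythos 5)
Your overall architecture matches the paper's proof closely: uniform energy bounds and thin-domain Poincar\'e in Step 1, vertical averaging of the mass equation in Step 2, extraction of the Ekman term from the boundary condition and filtering of the singular Coriolis term in Step 3, and reduction of the convective term to a product of averages followed by a wave-system analysis \emph{\`a la} \cite{F:G} in Step 4. However, Step 2 contains a genuine gap, and Steps 3--4 have two inaccuracies worth flagging.

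The gap in Step 2: you claim that the averaged mass equation $\veps\,\partial_t\oline{\sigma_\veps}+\divh\oline{\rho_\veps u_{\veps,h}}=0$, together with the $L^2$ bound on $\oline{\rho_\veps u_{\veps,h}}$ and hypothesis \eqref{ub:r_in}, already yields $\big(\oline{\sigma_\veps}\big)_\veps\Subset L^\infty_T H^{-2}(\R^2)$. This cannot be right as written: integrating the mass equation in time gives $\oline{\sigma_\veps}(t)=\oline{r^{in}_\veps}-\veps^{-1}\int_0^t\divh\oline{\rho_\veps u_{\veps,h}}\,\dd\tau$, and the second term is only $O(\veps^{-1})$ in $H^{-1}$, which blows up. The mass equation tells you $\veps\,\partial_t\oline{\sigma_\veps}$ is bounded, not $\partial_t\oline{\sigma_\veps}$. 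To close this, one must bring in the momentum equation. The paper's observation is that the filtered quantity $\oline{\eta_\veps}-\oline{\sigma_\veps}$ (with $\eta_\veps=\curlh(\rho_\veps u_{\veps,h})$) satisfies an equation with no fast time oscillation and with forcing bounded in $L^1_TH^{-2}$; combined with the $L^\infty_TH^{-1}$ bound on $\oline{\eta_\veps}$ coming from the energy, this gives the uniform $L^\infty_TH^{-2}$ bound on $\oline{\sigma_\veps}$. This combination of mass and momentum equations is precisely the ``somehow surprising'' observation the paper singles out, and it is essential; without it you cannot even define the limit $\sigma$ of item (d).

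Two further points, less serious. In Step 3 you assert that ``both traces converge to $u$'': in fact the traces converge only weakly to a priori distinct profiles $u^\pm$, and the identification $u^+=u^-=u$ requires a separate argument (a triangular decomposition $u-u^-=(u-\oline{u_{\veps,h}})+(\oline{u_{\veps,h}}-u_{\veps,h}(\cdot,-\ell_\veps))+(u_{\veps,h}(\cdot,-\ell_\veps)-u^-)$, with the middle term controlled via the thin-domain Poincar\'e inequality). In Step 4 you attribute the construction of $\Gamma$ to De~Rham's theorem; this is not how the paper proceeds. Because the limit system is underdetermined, $\Gamma$ cannot be recovered a posteriori from the divergence-free constraint; it is built explicitly by decomposing $\divh(\rho_0^{in}\oline{u_{\veps,h}}\otimes\oline{u_{\veps,h}})$ along the local frame $\{\nabla_h\rho_0^{in},\nabla_h^\perp\rho_0^{in}\}$ (using the non-degeneracy condition \eqref{cond:12}), and by extracting weak-$*$ limits in spaces of Radon measures — a high-frequency cut-off $S_M$ is needed to justify this pointwise frame decomposition, and the wave system for $(\oline{\sigma_\veps}_M,\oline{\eta_\veps}_M)$ is then used to kill the ``bad'' component along $\nabla_h^\perp\rho_0^{in}$. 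Your outline acknowledges the wave-system analysis but should be explicit that the $\Gamma$-term is constructed, not invoked abstractly.
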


In the previous statement, given a Banach space $X$ and  its dual space $X^*$, the notation $C^0\big([0,T];X^*_w\big)$
denotes the space of functions $f:[0,T]\longrightarrow X^*$ which are continuous with respect to the weak topology of $X^*$: more precisely, one has that,
for any $\vphi\in X$, the map $t\mapsto \lan f(t),\vphi\ran_{X^*\times X}$ is continuous over $[0,T]$.

\medbreak
The case where $\lam_\veps\,:=\, \alpha_{\eps} / \ell_{\eps} $ diverges, instead, is not really relevant for the investigation of the asymptotic behaviour
of the system. This is explained by the next statement: in that case the velocity fields $u_\veps$'s converge to zero, so no dynamics can
be seen in the limit.

\begin{thm}
\label{Theo:2}
Let the assumptions of Theorem \ref{Main:Theo} be in force, but suppose this time that
\[
\lam_\veps\,:=\,\frac{\alpha_\veps}{\ell_\veps}\,\longrightarrow\,+\infty\qquad\qquad \mbox{ in the limit }\qquad \veps\ra0^+\,.
\]

Then, for any $T>0$ fixed, the convergence properties of the densities stated in item (a) of Theorem \ref{Main:Theo}
still holds true; in addition, one has the strong convergence 
$\overline{u_{\eps}}\, \longrightarrow\, 0$ in the space $L^2\big([0,T];L^2(\R^2)\big)$.
\end{thm}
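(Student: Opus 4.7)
The strategy is to bootstrap the dissipative boundary contribution in the energy inequality (which carries the slip coefficient $\alpha_\veps$) into a quantitative $L^2$-bound on $u_\veps$ throughout the thin slab, via a vertical Poincar\'e--trace inequality. The density convergence is not affected by the size of $\lam_\veps$ and follows verbatim from the arguments already used for item (a) of Theorem \ref{Main:Theo}, so I would only quote it.

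\textbf{Step 1 (energy bookkeeping).} Dividing the energy inequality of Definition \ref{def:weak:sol} by $2\ell_\veps$ and invoking the uniform initial-energy hypothesis \eqref{hyp:en_bound}, the right-hand side is bounded uniformly in $\veps$. Extracting the individual nonnegative contributions on the left gives
\[
\int_0^T\!\!\int_{\d\Omega_\veps}|u_\veps|^2\,\dx_h\,\dt\,\leq\,\frac{C}{\lam_\veps},\qquad\quad \|Du_\veps\|^2_{L^2([0,T];L^2(\Omega_\veps))}\,\leq\,C\,\ell_\veps.
\]
The first bound is the only quantitative new input relative to the regime $\lam<+\infty$: the boundary trace of $u_\veps$ vanishes at rate $1/\lam_\veps$. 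The one technical subtlety of the whole argument lies in upgrading the symmetric-gradient bound $\|Du_\veps\|^2_{L^2_T L^2}\lesssim\ell_\veps$ to an analogous bound on $\|\d_3u_\veps\|^2_{L^2_T L^2}$; this requires a Korn-type inequality on the thin slab compatible with the Navier-slip boundary conditions \eqref{eq:bc}, whose constants must not degenerate as $\ell_\veps\to0^+$, and in which the surface remainder is reabsorbed using the first inequality above. This is the main obstacle.

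\textbf{Step 2 (Poincar\'e--trace inequality).} From the fundamental theorem of calculus $u_\veps(x_h,x_3)=u_\veps(x_h,\ell_\veps)-\int_{x_3}^{\ell_\veps}\d_3u_\veps(x_h,s)\,\dd s$, squaring, integrating in $x_3\in\,]-\ell_\veps,\ell_\veps[\,$ and then in $x_h\in\R^2$ and in $t\in[0,T]$, one obtains
\[
\int_0^T\!\!\int_{\Omega_\veps}|u_\veps|^2\,\dx\,\dt\,\leq\,4\ell_\veps\int_0^T\!\!\int_{\d\Omega_\veps}|u_\veps|^2\,\dx_h\,\dt\,+\,4\ell_\veps^2\,\|\d_3u_\veps\|^2_{L^2([0,T];L^2(\Omega_\veps))}.
\]
Plugging in the estimates of Step 1, dividing by $2\ell_\veps$ and using Jensen's inequality $\big|\oline{u_\veps}\big|^2\leq \overline{|u_\veps|^2}$, one concludes
\[
\left\|\oline{u_\veps}\right\|^2_{L^2([0,T];L^2(\R^2))}\,\leq\,\frac{C}{\lam_\veps}\,+\,C\,\ell_\veps^2\,\longrightarrow\,0\qquad \text{as}\quad \veps\to0^+,
\]
which is precisely the claimed strong convergence.

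\textbf{Step 3 (density).} The convergence $\oline{\rho_\veps}\cvwstar\rho_0^{in}$ is unchanged from Theorem \ref{Main:Theo}: the uniform bound \eqref{hyp:rho_in-bound} yields weak-$*$ compactness in $L^\infty([0,T]\times\R^2)$, while the vertically averaged continuity equation $\d_t\oline{\rho_\veps}+\divh\!\big(\oline{\rho_\veps\,u_{\veps,h}}\big)=0$ (the vertical flux vanishing because $u_\veps\cdot n=0$ on $\d\Omega_\veps$) provides the time-equicontinuity needed to upgrade weak-$*$ compactness to convergence in $C^0\big([0,T];L^p_w(K)\big)$ via Ascoli--Arzel\`a. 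Moreover, Step 2 combined with $\|\rho_\veps\|_{L^\infty}\leq C$ gives $\oline{\rho_\veps\,u_{\veps,h}}\longrightarrow 0$ in $L^2([0,T];L^2(\R^2))$, so the limiting density is stationary in time and coincides with the initial profile $\rho_0^{in}$ thanks to the convergence $\oline{\rho_\veps^{in}}\longrightarrow\rho_0^{in}$ ensured by \eqref{ub:r_in}.
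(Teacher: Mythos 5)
Your proof follows essentially the same route as the paper's: express $\oline{u_\veps}$ as the boundary trace plus a vertical integral of $\d_3 u_\veps$, and control both pieces by the uniform energy estimate. The slight differences --- you anchor at the upper boundary and use squaring plus Cauchy--Schwarz, while the paper anchors at the lower boundary and uses Minkowski plus Jensen --- are purely cosmetic; the resulting bound $\|\oline{u_\veps}\|^2_{L^2_T(L^2(\R^2))}\lesssim 1/\lam_\veps+\ell_\veps^2$ is the paper's.

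The only substantive issue is the paragraph in Step 1 about Korn's inequality, which identifies an obstacle that is not actually there. In this paper $Du$ denotes the \emph{full} Jacobian matrix (the symmetric part is written $\D u=(Du+\nabla u)/2$), and the weak formulation of Definition \ref{def:weak:sol}(iv) carries the bilinear form $\int\nabla u:\nabla\psi$ together with the surface term $2\alpha\int_{\d\Omega}u\cdot\psi$. Testing formally with $\psi=u$ therefore produces $\int|\nabla u|^2=\int|Du|^2$, which is exactly what the energy inequality (v) records; in particular, the uniform bound $\overline{\left\|\d_3 u_\veps(\cdot,\cdot,x_3)\right\|^2_{L^2_T(L^2(\R^2))}}\leq C$ is an immediate consequence of \eqref{est:unif-en}, and no Korn-type inequality is needed. (That this form of the viscous term is compatible with the Navier-slip condition \eqref{eq:bc} on a flat boundary is elementary: since $u_3=0$ on $\d\Omega_\veps$, the tangential derivatives $\d_1 u_3,\d_2 u_3$ vanish there, so $(Du\,n)_\tau=2(\D u\,n)_\tau=-2\alpha\,u_\tau$, matching the factor $2\alpha$ in the weak formulation.) Once you delete that paragraph, your proof is complete and coincides with the one in the paper.
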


As already remarked at the beginning of this subsection, the result of Theorem \ref{Main:Theo} is not completely satisfactory,
inasmuch as the limit dynamics is not identified in a clear way. As a matter of fact, the limit system is underdetermined, because
only one equation is derived for the two quantities $\eta$ (or, equivalently, $\omega$) and $\s$ which encode the asymptotic dynamics. We also
notice the presence of the ``Lagrangian multiplier'' $\rho_0^{in}\,\nabla_h\Gamma$ associated to the divergence-free constraint
$\divh\big(\rho_0^{in}\,u\big)\,=\,0$; however, the distribution $\Gamma$ is not better identified either.

In the case when $\rho_0^{in}$ is constant, say $\rho_0^{in}\equiv1$, however, much more precise information on the limit dynamics can be derived.
Of course, this is not surprising: indeed, in the instance $\rho_\veps\,=\,1\,+\,\veps\,r_\veps$, the (singular) Coriolis operator can be decomposed
into
\[
 \frac{1}{\veps}\,e_3\times\rho_\veps\,u_\veps\,=\,\frac{1}{\veps}\,e_3\times u_\veps\,+\,\,e_3\times r_\veps\,u_\veps\,,
\]
which is a merely $O(1)$ perturbation of the classical homogeneous case (which is treatable in a $3$-D geometry, see \tsl{e.g.} \cite{C-D-G-G}, \cite{G-SR}).

The result for $\rho_0^{in}\equiv1$ reads as follows. Notice that we limit ourselves to state the result
in the case $\lam_\veps\longrightarrow\lam<+\infty$, because, when $\lam=+\infty$, then the limit dynamics becomes trivial (namely one has $u\equiv0$).

\begin{thm}
\label{Theo:3}
Let $\big(\ell_\veps\big)_{\veps\in\,]0,1]}$ and $\big(\alpha_\veps\big)_{\veps\in\,]0,1]}$ be two sequences of positive real numbers
such that condition \eqref{eq:ell} holds true and such that
\[
\exists\,\lam\geq0\qquad\qquad \mbox{ such that }\qquad\qquad\qquad  \lambda_\veps\,:=\,\frac{\alpha_\veps}{\ell_\veps}\,\longrightarrow\,\lam\qquad \mbox{ when }\quad
\veps\ra0^+\,.
\]
Take a sequence of initial data $\big(\rho_\veps^{in},m_{\veps}^{in}\big)_{\veps\in\,]0,1]}$ such that, for any $\veps\in\,]0,1]$, one has
$\big(\rho^{in}_\veps,m_{\veps}^{in}\big)=\big(\rho_\veps^{in},\rho_{\veps}^{in}u_{\eps}^{in} \big)$, with $ \div\big(u_{\eps}^{in}\big) = 0 $,
and such that they satisfy the hypotheses fixed in Subsection \ref{ss:assumptions},
where we assume that $\rho^{in}_0\equiv 1$ and, correspondingly, we dismiss condition \eqref{cond:12}.
Assume also that there exists a positive constant $C>0$ such that
\begin{equation*}
\forall\,\veps>0\,,\qquad\qquad \overline{\left\|r^{in}_{\eps}(\cdot,x_3)\right\|^2_{L^2(\R^2)}}\,\leq\, C \quad \text{ and } \quad \|r_{\eps}^{in}\|_{L^{\infty}(\Omega_{\eps})} \leq C.
\end{equation*}
For any $\veps\in\,]0,1]$, let $\big(\rho_\veps,u_\veps\big)$ be a global in time finite energy weak solution to system \eqref{eps:sys}-\eqref{eq:bc}
on $\R_+\times\Omega_\veps$, where $\Omega_\veps$ is defined in \eqref{eq:Omega_e}, related to the initial datum
$\big(\rho_\veps^{in},m_\veps^{in}\big)$.

Then, for any $\veps\in\,]0,1]$, one can write
\[
\rho_\veps\,=\,1\,+\,\veps\,r_\veps\,,\qquad\qquad \mbox{ with }\qquad
r_\veps\,\in\,C^0_b\big(\R_+;L^2(\Omega_\veps)\big)\cap L^\infty\big(\R_+\times\Omega_\veps\big)\,,
\]
where the notation $C^0_b$ stands for the intersection $C^0\cap L^\infty$ and $r_\veps$
solves
\[
\d_tr_\veps\,+\,\div\big(r_\veps\,u_\veps\big)\,=\,0\,,\qquad\qquad \mbox{ with }\qquad \big(r_\veps)_{|t=0}\,=\,r_0^{in}\,,
\]
in the sense of $\mc D'\big(\R_+\times\Omega_\veps\big)$. In addition,
there exist $r_0\in C^0_b\big(\R_+;L^2(\R^2)\big)\cap L^\infty\big(\R_+\times\R^2\big)$ and a two-dimensional vector field
$u\in L^\infty\big(\R_+;L^2(\R^2)\big)$, with $D_hu\in L^2\big(\R_+;L^2(\R^2)\big)$ and $\divh(u)=0$, such that,
up to the extraction of a suitable subsequence (not relabelled here), we have the following convergence properties, for any time $T>0$ fixed:
\begin{itemize}

\item $ \overline{r_{\eps}} \cvwstar r_0 $ in $ L^{\infty}\big([0,T]\times \bbR^2 \big) $ and
$ \overline{r_{\eps}} \longrightarrow r_0 $ in $ C^0\big([0,T];L^p_w(K)\big) $ for any compact $ K \subset \bbR^2 $
and for any $1\leq p<+\infty$;

\item $\overline{ u_{\eps,h} } \cv u $ in the space $ L^2\big([0,T];H^1(\mathbb{R}^2)\big)$;

\item $ \overline{u_{\eps,3}}  \longrightarrow 0 $ in $ L^2\big([0,T];L^2(\mathbb{R}^2)\big)$.

\end{itemize} 
Finally, the couple $\big(r_0, u\big) $ satisfies (in the weak sense) the system
\begin{equation} \label{final:sys}
\left\{\begin{array}{l}
\partial_t r_0\, +\, \divh\big( r_0\, u\big)\, =\, 0 \\[1ex]
\partial_t u\, +\, \divh\big( u \otimes u\big)\, -\, \Delta_h u\, +\, \nabla_h \Pi \,+\, r_0\,u^{\perp}\,+\, 2\,\lam\, u\, = \, 0 \\[1ex]
\divh(u) \,= \, 0
       \end{array}
\right.
\end{equation}
over $\R_+\times\R^2$, with respect to the initial condition $\big(r_0,u\big)_{|t=0}\,=\,\big( r_0^{in}, u_{0}^{in}\big)$
and for a suitable pressure function $\Pi$.
\end{thm}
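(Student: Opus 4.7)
The strategy is fourfold: (1) exploit uniform bounds from the energy inequality; (2) perform a Taylor-Proudman reduction to vertical averages; (3) establish strong compactness of the averaged horizontal velocities by filtering the singular Coriolis term; (4) pass to the limit in the averaged momentum equation, where the Ekman pumping term arises from the Navier-slip boundary condition. The key simplification in the quasi-homogeneous regime $\rho_\veps = 1 + \veps r_\veps$ is that the uniform $L^\infty$ bound on $r_\veps^{in}$, transported by the divergence-free $u_\veps$, propagates by the maximum principle; hence $\rho_\veps \in [1/2, 3/2]$ for small $\veps$, and the energy inequality yields the uniform bounds $(u_\veps)_\veps \Subset L^\infty_t L^2_x \cap L^2_t H^1_x$ together with $\sqrt{\alpha_\veps}\, u_\veps|_{\partial\Omega_\veps} \Subset L^2_{t,x}$. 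After rescaling by the slab thickness $2\ell_\veps$, these bounds transfer to vertical averages, giving $(\overline{u_\veps})_\veps \Subset L^\infty_t L^2(\R^2) \cap L^2_t H^1(\R^2)$ and $(\overline{r_\veps})_\veps \Subset L^\infty_{t,x}$.

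Next I would establish the Taylor-Proudman reduction. The Poincar\'e inequality across the thin slab gives $\|u_\veps - \overline{u_\veps}\|_{L^2_{t,x}} \lesssim \ell_\veps \|\partial_3 u_\veps\|_{L^2_{t,x}} \to 0$. Integrating $\divh u_{\veps,h} + \partial_3 u_{\veps,3} = 0$ in $x_3$ across the slab and using $u_{\veps,3}|_{\partial\Omega_\veps} = 0$ yields the \emph{algebraic} identity $\divh \overline{u_{\veps,h}} \equiv 0$. Writing $u_{\veps,3}(x_h, x_3) = -\int_{-\ell_\veps}^{x_3} \divh u_{\veps,h}(x_h, y_3)\, \dd y_3$ and estimating in $L^2$ shows $\overline{u_{\veps,3}} = O(\ell_\veps^{1/2})$ in $L^2_{t,x}(\R^2)$, hence strong convergence to zero. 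Items (b) and (c) of the convergence statement follow by weak compactness and this bound; for (a), the averaged transport equation $\partial_t \overline{r_\veps} + \divh \overline{r_\veps u_{\veps,h}} = 0$ (the boundary contribution vanishing thanks to $u_{\veps,3}|_{\partial} = 0$) gives time-regularity enough to invoke the Aubin-Lions lemma.

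The crux of the proof is strong $L^2_{loc}$ compactness of $\overline{u_{\veps,h}}$. Since $\divh \overline{u_{\veps,h}} = 0$ exactly and $\curlh(v^\perp) = \divh v$ in two dimensions, the field $\overline{u_{\veps,h}}^\perp$ is curl-free on $\R^2$, hence a gradient: $\overline{u_{\veps,h}}^\perp = \nabla_h \phi_\veps$ for some scalar $\phi_\veps$. Decomposing the Coriolis term $(1/\veps)\, e_3 \times \rho_\veps u_\veps = (1/\veps)\, u_{\veps,h}^\perp + r_\veps\, u_{\veps,h}^\perp$ (plus a vertical component) and averaging, the singular part $(1/\veps)\, \overline{u_{\veps,h}}^\perp = (1/\veps)\, \nabla_h \phi_\veps$ is a pure gradient, absorbed into the pressure. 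The vertically averaged horizontal momentum equation modulo gradients then has time derivative bounded uniformly in $\veps$ in $H^{-k}_{loc}$ when tested against horizontally divergence-free smooth functions; combined with the uniform $L^2_t H^1_x$ bound, the Aubin-Lions lemma yields the strong convergence $\overline{u_{\veps,h}} \to u$ in $L^2_{loc}$.

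With strong compactness in hand, the passage to the limit is mostly routine. The convective term $\overline{\divh(\rho_\veps\, u_{\veps,h} \otimes u_{\veps,h})}$ converges to $\divh(u \otimes u)$: the Taylor-Proudman reduction allows one to replace vertical averages of products by products of averages, and strong convergence of $\overline{u_{\veps,h}}$ together with $\rho_\veps \to 1$ handles the rest. Analogously, the $O(1)$ Coriolis term $\overline{e_3 \times r_\veps\, u_{\veps,h}}$ converges to $r_0\, u^\perp$ by strong compactness of $\overline{r_\veps}$. The Ekman pumping term emerges from the vertical dissipation by direct computation: $-\overline{\partial_3^2 u_{\veps,h}} = -(1/(2\ell_\veps))\bigl[\partial_3 u_{\veps,h}\bigr]_{-\ell_\veps}^{\ell_\veps}$, and the Navier-slip condition gives $\partial_3 u_{\veps,h}|_{\pm\ell_\veps} = \mp 2\alpha_\veps\, u_{\veps,h}|_{\pm\ell_\veps}$, so this contribution equals $(\alpha_\veps/\ell_\veps)\bigl(u_{\veps,h}|_{\ell_\veps} + u_{\veps,h}|_{-\ell_\veps}\bigr) \to 2\lambda\, u$ by Taylor-Proudman and trace continuity. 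The principal obstacle throughout is the strong compactness step for $\overline{u_{\veps,h}}$: one must carefully filter the gradient part of the singular Coriolis term before Aubin-Lions can be invoked, as all other difficulties reduce to known manipulations in the thin-domain framework.
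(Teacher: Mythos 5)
Your sketch captures the overall architecture of the proof — reduce to vertical averages via the thin-slab Poincar\'e inequality, kill the singular Coriolis term by exploiting $\divh\overline{u_{\eps,h}}=0$, read the Ekman term off the boundary contribution — and this is indeed the paper's strategy. However, the crucial step of strong $L^2_{\rm loc}$ compactness for $\overline{u_{\eps,h}}$, which you correctly flag as ``the principal obstacle,'' is not handled correctly as written, and there are some smaller points worth flagging.

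The problem is that the Aubin--Lions lemma requires both the spatial-regularity bound and the time-derivative bound to concern the \emph{same} quantity. What the averaged momentum equation gives you, after projecting onto divergence-free fields, is a bound on $\partial_t\,\mathcal{P}\big(\overline{\rho_\eps u_{\eps,h}}\big)$; what lives in $L^2_T H^1$ is $\overline{u_{\eps,h}}$. These differ by $\eps\,\mathcal{P}\big(\overline{r_\eps u_{\eps,h}}\big)$, and although that quantity is $O(\eps)$ in $L^\infty_T L^2$, its \emph{time derivative} is not controlled: writing $\partial_t(r_\eps u_\eps) = -\div(r_\eps u_\eps)u_\eps + r_\eps\partial_t u_\eps$ and using the momentum equation for $\partial_t u_\eps$ brings back a factor $\eps^{-1}(\nabla\pi_\eps + e_3\times\rho_\eps u_\eps)$ that the prefactor $\eps$ only partially cancels, and in any case $\nabla\pi_\eps$ is not available in the weak setting. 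So a direct Aubin--Lions application to $\overline{u_{\eps,h}}$ does not close. The paper instead runs the P.-L.\ Lions mollification-and-pairing argument (Lemma \ref{l:goal}): it establishes $\mathcal{P}\big(\overline{\rho_\eps u_{\eps,h}}\big)\to u$ strongly only in $C^0_T(L^2_w)$, then upgrades to strong $L^2_{\rm loc}$ convergence of $\overline{u_{\eps,h}}$ by convolving with a kernel $\eta_n$, pairing, and showing norm convergence via a careful term-by-term estimate (where the quasi-homogeneity $\rho_\eps = 1+\eps r_\eps$ and Corollary \ref{cor:1} are used to show the commutator-type error $A_1,\dots,A_4$ vanishes). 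If you want a more elementary route in the quasi-homogeneous regime, you would at least need to first get $C^0_T H^{-\delta}_{\rm loc}$ compactness of $\mathcal{P}\big(\overline{\rho_\eps u_{\eps,h}}\big)$ and then interpolate against the $L^2_T H^1$ bound on $\overline{u_{\eps,h}}$; but as stated, Aubin--Lions does not apply.

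Two smaller points. First, the derivation of the Ekman term via $-\overline{\partial_3^2 u_{\eps,h}} = -(1/2\ell_\eps)[\partial_3 u_{\eps,h}]_{-\ell_\eps}^{\ell_\eps}$ is a formal computation that presupposes enough regularity to integrate by parts pointwise; in the framework of finite-energy weak solutions the Ekman term arises directly from the boundary integral $2\alpha_\eps\iint_{\partial\Omega_\eps}u_\eps\cdot\psi$ already present in the weak formulation, rescaled by $1/(2\ell_\eps)$, and converges by the weak compactness of traces (Lemma \ref{lemma:cons:bound:data}). Second, identifying the boundary-trace limits $u^\pm$ with the interior limit $u$ is not merely ``trace continuity''; the paper proves it via the explicit decomposition $u - u^\pm = (u-\overline{u_{\eps,h}}) + (\overline{u_{\eps,h}} - u_{\eps,h}(\cdot,\cdot,\mp\ell_\eps)) + (u_{\eps,h}(\cdot,\cdot,\mp\ell_\eps) - u^\pm)$ together with the $O(\ell_\eps)$ Poincar\'e estimate for the middle term. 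Both of these are fixable but should be spelled out.
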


The next sections are devoted to the proof of the previous statements. In Section \ref{s:bounds} we will derive uniform
bounds for the sequence of weak solutions we consider. As
Theorems \ref{Theo:2} and \ref{Theo:3} are direct consequences of those uniform bounds, their proofs will be
carried out in the next section. The proof of Theorem \ref{Main:Theo}, instead, is much more involved, and it will be the matter of Section \ref{s:proof-main}.

\section{Uniform bounds and consequences} \label{s:bounds}

This section is devoted to some preliminaries, which are necessary for proving our asymptotic results.
First of all, from the finite energy condition on each weak solution $\big(\rho_\veps,u_\veps\big)$ and the assumptions on the initial data,
we derive \emph{uniform bounds} in suitable norms for the whole family  $\big(\rho_\veps,u_\veps\big)_\veps$. This is the matter of Subsection
\ref{ss:finite-en}. Thanks to those uniform bounds, we can extract weak limit points: in Subsection \ref{ss:constraints} we state
some static constraints those limit points have to satisfy.
Finally, in Subsection \ref{ss:easy}, we complete the proof of Theorems \ref{Theo:2} and \ref{Theo:3}, as they are fairly direct
consequences of the uniform bounds. 

\subsection{Consequences of the finite energy condition} \label{ss:finite-en}

Directly from the definition of weak solution, recall Definition \ref{def:weak:sol}, together
with the assumptions imposed on the initial data in Subsection \ref{ss:assumptions}, we can deduce the following \tsl{a priori} estimates
for the family of weak solutions $\big(\rho_\veps,u_\veps\big)_\veps$.

\begin{lemma}
\label{Lemma:merc}
Let $\big(\rho_\veps^{in},m_{\veps}^{in}\big)_{\veps\in\,]0,1]}$ be a sequence of initial data satisfying the hypotheses fixed in Subsection
\ref{ss:assumptions}, where $\rho_0^{in}$ may be either constant, say $\rho_0^{in}\equiv1$, or non-constant.
For any $\veps\in\,]0,1]$, let $\big(\rho_\veps,u_\veps\big)$ be a global in time finite energy weak solution to system \eqref{eps:sys}-\eqref{eq:bc}
on $\R_+\times\Omega_\veps$, in the sense of Definition \ref{def:weak:sol}.

Then the following uniform estimates hold true: there exists a constant $C>0$ such that, for any $\veps\in\,]0,1]$ and almost any time $t\geq0$, one has
\begin{align*}
\left\| \overline{\rho_{\eps}}(t)\right\|_{L^\infty(\R^2)}\,\leq\,
\overline{\left\|\rho_{\eps}(t,\cdot,x_3)\right\|_{L^\infty(\R^2)}}\,&\leq\,
\left\| \rho_{\eps}^{in}(\cdot,x_3)\right\|_{L^\infty(\Omega_{\eps})}\,\leq\,C \\[1ex]
\left\|\overline{\sqrt{\rho_\veps}\,u_{\eps}}(t)\right\|_{L^2(\R^2)}^2\,\leq\,
\overline{\left\|\big(\sqrt{\rho_\veps}\,u_{\eps}\big)(t,\cdot,x_3)\right\|_{L^2(\R^2)}^2}\,&\leq\,
\oline{\left\|\big(m_\veps^{in} / \sqrt{\rho_\veps^{in}}\big)(\cdot,x_3)\right\|_{L^2(\R^2)}^2}\,\leq\,C  \\[1ex]
\left\|D_h\overline{u_{\eps}}\right\|_{L^2_t(L^2(\R^2))}^2\,\leq\,
\overline{ \left\|D u_{\eps}(\cdot,\cdot,x_3)\right\|_{L^2_t(L^2(\R^2))}^2}\,&\leq\,
\oline{\left\|\big(m_\veps^{in} / \sqrt{\rho_\veps^{in}}\big)(\cdot,x_3)\right\|_{L^2(\R^2)}^2}\,\leq\,C\, \\[1ex]
\frac{\alpha_{\eps}}{\ell_{\eps}}\,  \Big( \|u_{\eps}(\cdot,\cdot,-\ell_{\eps})\|_{L^2_t(L^2(\R^2))}^2\,+\,
\|u_{\eps}(\cdot,\cdot,\ell_{\eps})\|_{L^2_t(L^2(\R^2))}^2 \Big) \, &\leq \,
\oline{\left\|\big(m_\veps^{in} / \sqrt{\rho_\veps^{in}}\big)(\cdot,x_3)\right\|_{L^2(\R^2)}^2}\,\leq\,C\, .
\end{align*}
In addition, we have that
\[
\left(\oline{u_\veps}\right)_{\veps\in\,]0,1]}\,\Subset\,L^2_{\rm loc}\big(\R_+;H^1(\R^2)\big)\,.
\]

\end{lemma}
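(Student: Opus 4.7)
My plan is to derive all the stated bounds as consequences of the energy inequality of Definition \ref{def:weak:sol}, of the maximum principle for the transport equation satisfied by $\rho_\veps$, and of Jensen/Minkowski-type inequalities for the vertical averaging operator $\oline{\,\cdot\,}$, combined with the uniform assumptions on the initial data fixed in Subsection \ref{ss:assumptions}.

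I would begin with the density chain. Since $u_\veps$ is divergence-free and $\rho_\veps$ solves the continuity equation in the renormalised sense (see item (iii) of Definition \ref{def:weak:sol}), the standard maximum principle yields, for almost every $t\ge0$, $\|\rho_\veps(t)\|_{L^\infty(\Omega_\veps)}\le\|\rho_\veps^{in}\|_{L^\infty(\Omega_\veps)}\le 2\rho^*$, using hypothesis \eqref{hyp:rho_in-bound} in the last step. The middle inequality of the chain is the trivial observation that each slice-wise $L^\infty(\R^2)$ norm is bounded by the $L^\infty(\Omega_\veps)$ norm of the full function, whereas the leftmost one, $\|\oline f\|_{L^\infty(\R^2)}\le\oline{\|f(\cdot,\cdot,x_3)\|_{L^\infty(\R^2)}}$, expresses the elementary fact that the vertical average cannot exceed the average of its slice-wise suprema.

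The three subsequent chains are obtained simultaneously from the energy inequality of item (v) of Definition \ref{def:weak:sol}. Two useful simplifications are in order: the Coriolis contribution drops out of the energy balance thanks to the skew-symmetry $e_3\times v\cdot v=0$, while the pressure gradient contributes nothing either, owing to $\div u_\veps=0$ combined with the boundary condition $u_\veps\cdot n_{|\d\Omega_\veps}=0$. Dividing the resulting inequality by $2\ell_\veps$ then turns every volume integral over $\Omega_\veps$ into the vertical average of the slice-wise $L^2$ norm squared: this immediately produces the three central estimates, while the right-hand side coincides (up to an absorbed multiplicative constant) with the quantity controlled by hypothesis \eqref{hyp:en_bound}, yielding the uniform bound $C$. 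The leftmost inequality in each of these chains is the Jensen-type bound $|\oline f(x_h)|^2\le\oline{|f|^2}(x_h)$ (a direct consequence of Cauchy-Schwarz); for the third chain I would moreover use that horizontal derivatives commute with vertical integration, so that $D_h\oline{u_\veps}=\oline{D_hu_\veps}$.

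The most delicate point is the final assertion, $(\oline{u_\veps})_\veps\Subset L^2_{\rm loc}(\R_+;H^1(\R^2))$. The $L^2_{\rm loc}(\R_+;L^2(\R^2))$ bound on $D_h\oline{u_\veps}$ is already in hand, so the missing ingredient is an $L^2$ bound on $\oline{u_\veps}$ itself. The energy inequality controls $\sqrt{\rho_\veps}u_\veps$ but not $u_\veps$ as soon as $\rho_\veps$ may vanish, and this is precisely where the non-degeneracy assumption \eqref{hyp:rho_in_unif} becomes crucial. My strategy is, for $\delta>0$ fixed as in \eqref{hyp:rho_in_unif}, to split
\[
\int_0^T\!\!\int_{\Omega_\veps}|u_\veps|^2\,\dx\dt\;\le\;\frac{1}{\delta}\int_0^T\!\!\int_{\Omega_\veps}\rho_\veps|u_\veps|^2\,\dx\dt\;+\;\int_0^T\!\!\int_{\{\rho_\veps\le\delta\}}|u_\veps|^2\,\dx\dt\,.
\]
The first term is of order $T\ell_\veps$ by the energy bound already established. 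For the second term, I would combine H\"older's inequality with a three-dimensional Sobolev-type embedding of the form $\dot H^1(\Omega_\veps)\hookrightarrow L^6(\Omega_\veps)$, reducing matters to controlling the Lebesgue measure of $\{\rho_\veps(t)\le\delta\}$. Since $\rho_\veps$ is transported by the divergence-free field $u_\veps$, the renormalisation of the continuity equation applied to smooth approximations of $\mds{1}_{\{\rho\le\delta\}}$ shows that $|\{\rho_\veps(t)\le\delta\}|=|\{\rho_\veps^{in}\le\delta\}|$ for all $t$, and this last quantity is at most of order $\delta\,\ell_\veps$ by a Markov-type argument that exploits hypothesis \eqref{hyp:rho_in_unif}. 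Summing both contributions and dividing by $2\ell_\veps$ delivers the missing $L^2_TL^2(\R^2)$ bound on $\oline{u_\veps}$. This vacuum-handling argument is where I expect the main technical effort to lie; everywhere else, the proof is essentially a careful bookkeeping of the energy inequality combined with the Jensen-type inequalities for the vertical average.
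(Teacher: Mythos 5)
Your treatment of the density bounds and of the three chains coming from the normalised energy inequality (dividing by $2\ell_\veps$, cancellation of the pressure and Coriolis terms, Jensen, commutation of $D_h$ with vertical averaging) matches the paper's proof and is correct.

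The final assertion, the uniform bound $(\oline{u_\veps})_\veps\Subset L^2_{\rm loc}(\R_+;H^1(\R^2))$, is where you diverge from the paper, and your route has a genuine gap. You invoke a homogeneous Sobolev embedding $\dot H^1(\Omega_\veps)\hookrightarrow L^6(\Omega_\veps)$. This is false on the slab $\Omega_\veps=\R^2\times\,]-\ell_\veps,\ell_\veps[\,$ with the Navier-slip boundary condition: the nearly-constant horizontal field $u(x)=\phi(x_h/R)\,e$, with $\phi$ a fixed bump and $e$ a horizontal unit vector, is tangential on $\d\Omega_\veps$ and satisfies $\|\nabla u\|_{L^2(\Omega_\veps)}\sim\ell_\veps^{1/2}$ while $\|u\|_{L^6(\Omega_\veps)}\sim(R^2\ell_\veps)^{1/6}$, so no constant works uniformly as $R\to+\infty$. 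The embedding that does hold (Lemma~\ref{l:Poincare}) involves the full $H^1(\Omega_\veps)$ norm after $\ell_\veps$-normalisation, and this reintroduces $\|u_\veps\|_{L^2(\Omega_\veps)}$ on the right-hand side, i.e.\ exactly the quantity you are trying to bound. The argument can in principle be rescued by an absorption: one checks that the $\ell_\veps$ factors cancel, so that the contribution of $\|u_\veps\|_{L^2}^2$ on the right carries a prefactor $C\,\de^{2/3}$; since \eqref{hyp:rho_in_unif} is monotone in $\de$, one may shrink $\de$ until $C\,\de^{2/3}<1$ and absorb, using the a priori finiteness of $\int_0^T\|u_\veps\|_{L^2(\Omega_\veps)}^2\,\dd t$ for each fixed $\veps$. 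None of this is in your write-up, and without it the step does not close.

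For comparison, the paper avoids the issue entirely: it decomposes $u_\veps=u_\veps\mds{1}_{\{\rho_\veps\geq\de\}}+u_\veps\mds{1}_{\{\rho_\veps\leq\de\}}$, bounds the vertical average of the first piece in $L^\infty_T(L^2(\R^2))$ using $|u_\veps|\leq\de^{-1/2}\sqrt{\rho_\veps}\,|u_\veps|$ on that set, bounds the second piece in $L^\infty_T(L^1(\R^2))$ via Cauchy--Schwarz against $\rho_\veps^{-1/2}\mds{1}_{\{\rho_\veps\leq\de\}}$ and the conservation of $\oline{\rho_\veps^{-1}\mds{1}_{\{\rho_\veps\leq\de\}}}$ in $L^1(\R^2)$ (a renormalised quantity), and then concludes from the $L^1+L^2$ control together with $D_h\oline{u_\veps}\in L^2_T(L^2)$ via the functional-analytic lemma in Appendix~B of \cite{lions:book}. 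That route stays entirely in two dimensions and sidesteps both the Sobolev-constant issue in thin domains and the circularity. If you prefer your Sobolev route, you must replace $\dot H^1$ by $H^1$, make the $\de$-smallness and absorption explicit, and verify the measure-of-level-set bound using a $C^1_b$ approximation of the indicator (the renormalisation in item~(iii) of Definition~\ref{def:weak:sol} is only stated for $\bt\in C^1_b$).
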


\begin{proof}
To begin with, we consider the finite energy inequality satisfied by the family of weak solutions, see item (v) of Definition \ref{def:weak:sol}: we have
\begin{align*}
& \frac{1}{2\,\ell_{\eps}} \int_{-\ell_\veps}^{\ell_{\eps}}\int_{\R^2}\rho_{\eps}(t)\,\left|u_{\eps}(t)\right|^2\,\dd x_h\,\dd x_3\, +\,
\frac{1}{2\,\ell_{\eps}}\int_{-\ell_\veps}^{\ell_{\eps}}\int_{0}^{t} \int_{\R^2}\left|Du_{\eps}\right|^2\,\dd x_h\,\dd\t\,\dd x_3 \\
&\qquad\qquad\qquad\qquad\qquad \,+\,
\frac{\alpha_{\eps}}{\ell_{\eps}}\int_{0}^t\int_{\R^2\times\{-\ell_\veps,\ell_\veps\}}\big|u_{\eps}\big|^2\,\dd x_h\,\dd\t\;\leq\;
\frac{1}{2\ell_{\eps}}\int_{-\ell_\veps}^{\ell_{\eps}}\int_{\R^2}\frac{\big|m_{\eps}^{in}|^2}{\rho_{\eps}^{in}}\,\dd x_h\,\dd x_3\,.
\end{align*}
Owing to our assumptions on the initial data, see in particular \eqref{hyp:en_bound},
the right-hand side of the previous inequality is uniformly bounded for $\veps\in\,]0,1]$. Hence, we deduce
that there exists a constant $C>0$ such that
\begin{align}
&\sup_{t\in\R_+}\overline{\left\|\big(\sqrt{\rho_\veps}\,u_{\eps}\big)(t,\cdot,x_3)\right\|_{L^2(\R^2)}^2}\, +\,
\overline{ \left\|D u_{\eps}(\cdot,\cdot,x_3)\right\|_{L^2_t(L^2(\R^2))}^2}\,  \label{est:unif-en} \\
&\qquad\qquad\qquad\qquad
+ \, \frac{\alpha_{\eps}}{\ell_{\eps}} \| u_{\eps}(\cdot,\cdot,-\ell_{\eps})  \|^2_{L^2_t(L^2(\bbR^2))} \, + \,
\frac{\alpha_{\eps}}{\ell_{\eps}} \| u_{\eps}(\cdot,\cdot,+\ell_{\eps})  \|^2_{L^2_t(L^2(\bbR^2))} \, \leq\,C\,. \nonumber
\end{align}
Of course, the norm of the average is controlled (for instance, by the Jensen inequality) by the average of the norm, so the second and third inequalities
are proved. Observe that the Navier-slip boundary conditions imply that
$\oline{\d_3u_{\veps,3}}\equiv0$, but we cannot infer the vanishing of the mean of the vertical derivatives of the horizontal components.

Let us now switch to consider the density functions. We notice that $\rho_\veps$ is transported by the divergence-free velocity field $u_\veps$,
so, for any $p\in[1,+\infty]$, its $L^p$ norm is (formally) preserved for all times.
By taking $p=+\infty$ and averaging with respect to the vertical variable, for almost all $t\geq0$ we immediately get
\[
\forall\,\veps\in\,]0,1]\,,\qquad\qquad
\left\|\oline{\rho_\veps}(t)\right\|_{L^\infty(\R^2)}\,\leq\,\left\|\rho_\veps(t)\right\|_{L^\infty(\Omega_\veps)}\,=\,
\left\| \rho_{\eps}^{in}\right\|_{L^\infty(\Omega_\veps)}\,\leq\,2\,\rho^*\,,
\]
where we have also used \eqref{hyp:rho_in-bound}.
By the same token, employing \eqref{hyp:rho_in_unif}, we see that
\begin{equation} \label{est:rho_unif}
\sup_{t\in\R_+}\oline{\left\|\Big(\big(1 / \rho_\veps\big)\;\mds{1}_{\{\rho_\veps\leq\de\}}\Big)(t,\cdot,x_3)\right\|_{L^1(\R^2)}}\,=\,
\sup_{t\in\R_+}\left\|\Big(\oline{\big(1 / \rho_\veps\big)\;\mds{1}_{\{\rho_\veps\leq\de\}}}\Big)(t)\right\|_{L^1(\R^2)}\,\leq\,C\,.
\end{equation}

To conclude, we have to derive the claimed uniform boundedness of the sequence $\big(\oline{u_\veps}\big)_\veps$. For this, we are going to adapt
the arguments of \cite{lions:book}; estimate \eqref{est:rho_unif} will play a key role.

We start by writing
\[
u_\veps\,=\,u_\veps^1\,+\,u_\veps^2\,,\qquad\qquad \mbox{ with }\qquad u_\veps^1\,:=\,u_\veps\,\mds{1}_{\{\rho_\veps\geq\de\}}\,,\quad
u_\veps^2\,:=\,u_\veps\,\mds{1}_{\{\rho_\veps\leq\de\}}\,.
\]
Owing to \eqref{est:unif-en}, we easily see that $\big(\,\oline{u_\veps^1}\,\big)_\veps$ is uniformly bounded in $L^\infty\big(\R_+;L^2(\R^2)\big)$.
As for the other term, for almost any time $t\geq0$ (which we omit from the notation) we can bound
\begin{align*}
\left\|\oline{u_\veps^2}\right\|_{L^1(\R^2)}\,&=\,\frac{1}{2\,\ell_\veps}\int^{\ell_\veps}_{-\ell_\veps}\int_{\R^2}
\frac{1}{\sqrt{\rho_\veps}}\,\mds{1}_{\{\rho_\veps\leq\de\}}\,\sqrt{\rho_\veps}\,\big|u_\veps\big|\,\dd x_h\,\dd x_3 \\
&\leq\,\frac{1}{2\,\ell_\veps}\int^{\ell_\veps}_{-\ell_\veps}
\left\|\left(\frac{1}{\sqrt{\rho_\veps}}\,\mds{1}_{\{\rho_\veps\leq\de\}}\right)(\cdot,x_3)\right\|_{L^2(\R^2)}\,
\left\|\big(\sqrt{\rho_\veps}\,u_\veps\big)(\cdot,x_3)\right\|_{L^2(\R^2)}\,\dd x_3\,.
\end{align*}
Observing that
\[
\left\|\left(\frac{1}{\sqrt{\rho_\veps}}\,\mds{1}_{\{\rho_\veps\leq\de\}}\right)(\cdot,x_3)\right\|_{L^2(\R^2)}\,=\,
\left\|\left(\frac{1}{\rho_\veps}\,\mds{1}_{\{\rho_\veps\leq\de\}}\right)(\cdot,x_3)\right\|^{1/2}_{L^1(\R^2)}\,,
\]
an application of the Cauchy-Schwarz inequality with respect to the integral in the $x_3$-variable finally yields
\begin{align*}
\left\|\oline{u_\veps^2}\right\|_{L^1(\R^2)}\,&\leq\,\frac{1}{2\,\ell_\veps}
\left(\int^{\ell_\veps}_{-\ell_\veps}\left\|\left(\frac{1}{\rho_\veps}\,\mds{1}_{\{\rho_\veps\leq\de\}}\right)(\cdot,x_3)\right\|_{L^1(\R^2)}\right)^{1/2}\,
\left(\int^{\ell_\veps}_{-\ell_\veps}\left\|\big(\sqrt{\rho_\veps}\,u_\veps\big)(\cdot,x_3)\right\|_{L^2(\R^2)}^2\right)^{1/2} \\
&=\,\left(\;\oline{\left\|\left(\frac{1}{\rho_\veps}\,\mds{1}_{\{\rho_\veps\leq\de\}}\right)(\cdot,x_3)\right\|_{L^1(\R^2)}}\;\right)^{1/2}\,
\left(\;\oline{\left\|\big(\sqrt{\rho_\veps}\,u_\veps\big)(\cdot,x_3)\right\|_{L^2(\R^2)}^2}\;\right)^{1/2}\,.
\end{align*}
In view of \eqref{est:rho_unif} and \eqref{est:unif-en} again, from the previous inequality we infer that, for almost any $t\geq0$,
one has $\big(\,\oline{u_\veps^2}(t)\,\big)_\veps\,\Subset\,L^1(\R^2)$.
This fact, together with the uniform bound on the gradients $\big(D_h\oline u_\veps\big)_\veps\,\Subset\,L^2\big(\R_+;L^2(\R^2)\big)$,
finally implies that $\big(\oline{u_\veps}\big)_\veps$ is uniformly bounded in $L^2_{\rm loc}\big(\R_+;H^1(\R^2)\big)$ (see Appendix B
of \cite{lions:book} for the precise argument).

The lemma is thus completely proved.
\end{proof}

Let us now derive some consequences of the uniform bounds stated in Lemma \ref{Lemma:merc}. First of all,
we see that there exist a density function $\rho_0\,=\,\rho_0(t,x_h)\,\in\,L^\infty\big(\R_+\times\R^2\big)$ and
a vector field $u\,=\,u(t,x_h)\,\in\,L^2_{\rm loc}\big(\R_+;H^1(\R^2)\big)$, with $u\,=\,\big(u_1,u_2\big)$ two-dimensional,
such that, up to an extraction (as usual, omitted here), one has
\begin{equation}
\label{Conv:u}
\overline{\rho_{\eps}}\,\stackrel{*}{\rightharpoonup}\, \rho_0\quad \mbox{ in }\ L^{\infty}\big(\R_+\times\R^2\big) \qquad\quad \mbox{ and } \qquad\quad \overline{u_{\eps,h}}\,\rightharpoonup\,u \quad \mbox{ in }\ L^2_{\rm loc}\big(\R_+;H^1(\R^2)\big) 
\end{equation}  
in the lmit $\veps\ra0^+$. By taking the average of the divergence-free condition $\div(u_\veps)=0$, which holds in the sense
of distributions, one also discovers that
\begin{equation} \label{eq:u-div-free}
\divh(u)\,=\,0\,.
\end{equation}
As for the vertical components $\big(\oline{u_{\veps,3}}\big)_\veps$, for any $\veps\in\,]0,1]$, almost any $t\geq0$ and almost any
$x=(x_h,x_3)\,\in\,\Omega_\veps$ we can compute
\begin{align*}
u_{\veps,3}(t,x_h,x_3)\,&=\,u_{\veps,3}(t,x_h,x_3)\,-\,u_{\veps,3}(t,x_h,-\ell_\veps)\,=\,\int_{-\ell_\veps}^{x_3}\d_3u_{\veps,3}(t,x_h,z)\,\dd z\,,
\end{align*}
where we have used the boundary conditions \eqref{eq:bc}.
From the previous relation we deduce that
\begin{align*}
\left\|u_{\veps,3}(t,\cdot,x_3)\right\|_{L^2(\R^2)}^2\,\leq\,4\,\ell^2_\veps\,\left\|\oline{\d_3u_{\veps,3}}\right\|^2_{L^2(\R^2)}\,,
\end{align*}
which finally implies, owing to the bounds stated in Lemma \ref{Lemma:merc}, that
\begin{equation} \label{est:u_3}
\left\|\oline{u_{\veps,3}}\right\|^2_{L^2_T(L^2(\R^2))}\,\leq\,\oline{\left\|u(\cdot,\cdot,x_3)\right\|_{L^2_T(L^2(\R^2))}^2}\,\leq\,
C\,\ell_\veps\,\overline{ \left\|D u_{\eps}(\cdot,\cdot,x_3)\right\|_{L^2_T(L^2(\R^2))}^2}\,\longrightarrow\,0
\end{equation}
when $\veps\ra0^+$, for any $T\geq0$ fixed.

Moreover, after taking the vertical average of the mass equation, we get
\begin{equation*}
\partial_t \overline{\rho}_{\eps}\, +\, \divh\big(\overline{\rho_{\eps}\,u_{\eps,h}}\big)\, =\, 0\,,
\end{equation*} 
which holds true in the distributional sense on $\R_+\times\R^2$. At this point, it is worth mentioning that, as each $\rho_\veps$ is transported by
the divergence-free vector field $u_\veps$ and the initial datum $\rho_\veps^{in}$ satisfies \eqref{hyp:rho_in-bound},
we get
\begin{equation} \label{est:rho_inf}
\forall\,\veps>0\,,\qquad\qquad 0\,\leq\,\rho_\veps\,\leq\,2\,\rho^*\,.
\end{equation}
Thus, from the equation above, we deduce that the sequence $\big(\partial_{t}\overline{\rho_{\eps}}\big)_\veps$
is uniformly bounded in the space $ L^\infty\big([0,T];H^{-1}(\R^2)\big) $, where the time $T>0$ can be taken arbitrarily large. Using classical
arguments (see \tsl{e.g.} Appendix C of \cite{lions:book} for details), we then find the strong convergence property
\begin{equation}
\label{Cweak:conv}
\overline{\rho_{\eps}}\, \longrightarrow\, \rho_0\qquad \mbox{ in }\qquad  C^0\big([0,T];L^p_w(K)\big)\,, 
\end{equation} 
for any fixed time $T>0$, any compact set $K\subset\subset\R^2$ and any finite $p\in[1,+\infty[\,$.

We conclude this part by noticing that, in the case $ \alpha_{\eps}/\ell_{\eps} \to \lambda $ with $ \lambda \in \,]0,+\infty[\,$,
then the \tsl{a priori} estimates imply the weak convergence of the boundary values of the velocities $\big(u_{\eps}\big)_\veps $.
\begin{lemma}
	\label{lemma:cons:bound:data}
	Assume that $ \alpha_{\eps}/\ell_{\eps} \to \lambda $, for some $0<\lam<+\infty$. Then there exist two two-dimensional vector fields
	$u^\pm\,=\,u^\pm(t,x_h)$, belonging to $L^2_{\rm loc}\big(\R_+;L^2(\R^2)\big)$, such that, for any $T>0$ fixed, one has
\begin{equation*}
u_{\eps}(\cdot,\cdot, -\ell_{\eps})\,\cv\,u^- \quad \text{ and } \quad u_{\eps}(\cdot,\cdot,\ell_{\eps}) \cv u^+ \qquad \text{ in }\quad
L^{2}\big([0,T]\times \bbR^2\big)\,.
\end{equation*}
	
\end{lemma}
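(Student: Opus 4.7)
The plan is to derive the result directly from the uniform bounds in Lemma \ref{Lemma:merc} combined with weak compactness in Hilbert space, without invoking any further structure of the equations. From the last estimate in Lemma \ref{Lemma:merc}, we already have the uniform control
\[
\frac{\alpha_\veps}{\ell_\veps}\Big(\|u_\veps(\cdot,\cdot,-\ell_\veps)\|^2_{L^2_T(L^2(\R^2))}+\|u_\veps(\cdot,\cdot,\ell_\veps)\|^2_{L^2_T(L^2(\R^2))}\Big)\,\leq\,C
\]
for a constant $C>0$ which does not depend on $\veps\in\,]0,1]$ nor on $T>0$. This bound is the whole content of the lemma, up to extracting a subsequence.

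The first step is to use the assumption $\alpha_\veps/\ell_\veps\to\lam\in\,]0,+\infty[\,$ to convert this into a genuine bound on the traces. Since $\lam>0$, there exists $\veps_0>0$ such that $\alpha_\veps/\ell_\veps\geq \lam/2$ for all $\veps\leq\veps_0$. Consequently, for any $T>0$, the two sequences $\big(u_\veps(\cdot,\cdot,\pm\ell_\veps)\big)_{\veps\leq\veps_0}$ are uniformly bounded in $L^2([0,T]\times\R^2;\R^3)$ by $\sqrt{2C/\lam}$.

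The second step is then a standard weak compactness argument. For each fixed $T>0$, the space $L^2([0,T]\times\R^2;\R^3)$ is a separable Hilbert space, so its closed balls are weakly sequentially compact. Extracting a weakly convergent subsequence for $T=1$, then a sub-subsequence for $T=2$, and so on, and finally applying a diagonal extraction, we obtain a single subsequence (not relabelled) and two vector fields $u^\pm\in L^2_{\rm loc}\big(\R_+;L^2(\R^2)\big)$ such that $u_\veps(\cdot,\cdot,\pm\ell_\veps)\rightharpoonup u^\pm$ weakly in $L^2([0,T]\times\R^2)$ for every $T>0$, which is exactly the claimed convergence. Since the horizontal boundary structure does not vary with $\veps$ (the boundary is $\R^2\times\{\pm\ell_\veps\}$, which we identify with $\R^2$), there is no difficulty in interpreting the trace functions as living on a common space. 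There is no real obstacle in this proof, the only subtle point being the harmless identification of $\d\Omega_\veps$ with $\R^2$ for every $\veps$, which allows a $\veps$-independent functional setting for the traces.
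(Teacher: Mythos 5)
Your proof is correct and follows the same route as the paper: use the boundary-trace term in the energy inequality of Lemma \ref{Lemma:merc}, absorb the prefactor $\alpha_\veps/\ell_\veps$ using $\alpha_\veps/\ell_\veps\geq\lam/2$ for small $\veps$, and extract a weakly convergent subsequence. The paper compresses the extraction into a single sentence, while you spell out the diagonal argument; the content is identical (and in fact, since your bound is $T$-independent, a single weak limit in $L^2(\R_+\times\R^2)$ would already suffice).
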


\begin{proof}
	If $0<\lambda<+\infty $, for $ \eps>0 $ small enough one has $ \alpha_{\eps}/\ell_{\eps} > \lambda / 2 $. Then,
	fixed any $T>0$, inequality \eqref{est:unif-en} implies 
\begin{equation*}
\left\|u_{\eps}(\cdot,\cdot,-\ell_{\eps})\right\|^2_{L^2([0,T]\times \bbR^2) }\,+\,
\left\|u_{\eps}(\cdot,\cdot,\ell_{\eps})\right\|^2_{L^2([0,T]\times \bbR^2) }\, \leq\,\frac{2\,C}{\lambda}\,.
\end{equation*}
	Up to extracting a suitable subsequence, we then derive the desired convergence.
\end{proof}

\subsection{Constraints on the limit points} \label{ss:constraints}

In this subsection we will use the convergences \eqref{Conv:u} and \eqref{Cweak:conv} to obtain some constrains on the limit points
$\rho_0$ and $u$ identified above. We start
by recalling the right scaling for the Poincar\'e and Sobolev inequalities in thin domains. 

\begin{lemma} \label{l:Poincare} 
There exists a positive constant $C>0$ such that,
for $ u_{\eps} \in H^1(\Omega_{\eps}) $, one has the following inequalities:
\begin{align*}
\frac{1}{2 \ell_{\eps}} \int_{-\ell_{\eps}}^{\ell_{\eps}} \int_{\bbR^2} \left| u_{\eps} -\overline{u_{\eps}}\right|^2\,\dd x_h\,\dd x_3\,&\leq\,
C\, \ell_{\eps}\, \int_{-\ell_{\eps}}^{\ell_{\eps}} \int_{\bbR^2} |D u_{\eps}|^2\,\dd x_h\,\dd x_3 \\
\left(\frac{1}{2 \ell_{\eps}} \int_{-\ell_{\eps}}^{\ell_{\eps}} \int_{\bbR^2} \left| u_{\eps} \right|^6\,\dd x_h\,\dd x_3\right)^{1/6}\,&\leq\,
C\, \left(\frac{1}{2\ell_{\eps}}\, \int_{-\ell_{\eps}}^{\ell_{\eps}} \int_{\bbR^2}\Big(|u_{\eps}|^2 + |D u_{\eps}|^2\Big)\,\dd x_h\,\dd x_3\,\right)^{1/2}.
\end{align*}
\end{lemma}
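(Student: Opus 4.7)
The plan is to prove the two inequalities separately, both by reduction to standard results on fixed domains. The first inequality (a vertical Poincaré estimate) follows from the usual Wirtinger (zero-mean Poincaré) inequality on an interval, applied slicewise in $x_h$. The second one (a uniform Sobolev embedding) is obtained by anisotropically rescaling the vertical variable so as to bring the problem back to a fixed reference slab where the classical $H^1 \hookrightarrow L^6$ embedding in dimension three applies.

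For the first inequality, I would fix $x_h \in \R^2$: since $u_\veps(x_h,\cdot) - \oline{u_\veps}(x_h)$ has zero vertical mean on the interval $(-\ell_\veps,\ell_\veps)$, the standard Wirtinger inequality (whose constant scales as the square of the length of the interval) gives
$$
\int_{-\ell_\veps}^{\ell_\veps}\big|u_\veps(x_h,x_3) - \oline{u_\veps}(x_h)\big|^2\,dx_3\,\leq\,C\,\ell_\veps^2\int_{-\ell_\veps}^{\ell_\veps}|\partial_3 u_\veps(x_h,x_3)|^2\,dx_3,
$$
with a constant $C$ independent of $\veps$. Integrating in $x_h$, dividing by $2\ell_\veps$ and bounding $|\partial_3 u_\veps|\leq|Du_\veps|$ yields exactly the first claim.

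For the second inequality, I would introduce the rescaled variable $y_3 := x_3/\ell_\veps$ and define $v(x_h,y_3) := u_\veps(x_h,\ell_\veps y_3)$ on the fixed slab $\Omega_\star := \R^2\times(-1,1)$. The Sobolev embedding $H^1(\Omega_\star) \hookrightarrow L^6(\Omega_\star)$, obtained for instance by even reflection across $\{y_3=\pm 1\}$ and the $\R^3$ Sobolev inequality, supplies a constant $C$ independent of $\veps$. A direct change of variables yields
$$
\|v\|_{L^6(\Omega_\star)}^6 = \frac{1}{\ell_\veps}\|u_\veps\|_{L^6(\Omega_\veps)}^6,\qquad \|v\|_{L^2}^2 = \frac{1}{\ell_\veps}\|u_\veps\|_{L^2}^2,\qquad \|\nabla_h v\|_{L^2}^2 = \frac{1}{\ell_\veps}\|\nabla_h u_\veps\|_{L^2}^2,
$$
together with $\|\partial_{y_3}v\|_{L^2}^2 = \ell_\veps\|\partial_3 u_\veps\|_{L^2}^2$. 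Inserting these identities into $\|v\|_{L^6}^2 \leq C\|v\|_{H^1}^2$, dividing through by $(2\ell_\veps)^{1/3}$ and using $\ell_\veps\leq 1$ (so that $\ell_\veps^{4/3}\leq \ell_\veps^{-2/3}$) to absorb the anisotropic $\partial_{y_3}$ term into the $1/(2\ell_\veps)$ weight on the right-hand side, one recovers precisely the stated inequality.

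Neither step is technically delicate: the only point requiring genuine care is the bookkeeping of the $\ell_\veps$-powers in the change of variables to ensure that the final constant is uniform in $\veps\in\,]0,1]$. The mild fact $\ell_\veps\leq 1$ (which is true up to discarding finitely many $\veps$, since $\ell_\veps\searrow 0^+$) is precisely what allows the vertical-derivative term, which picks up a factor $\ell_\veps$ rather than $1/\ell_\veps$ under rescaling, to be reabsorbed on the right-hand side without spoiling the uniformity of the constant.
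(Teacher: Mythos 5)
Your proof is correct and follows essentially the same route as the paper: both inequalities are obtained by pulling back standard estimates on the fixed reference slab $\R^2\times\,]-1,1[\,$ via the anisotropic rescaling $x_3\mapsto x_3/\ell_\veps$, and your careful tracking of the $\ell_\veps$-powers (in particular the observation that the vertical-derivative term acquires a factor $\ell_\veps$ under rescaling, which is benign because the $\ell_\veps$'s are uniformly bounded) makes explicit what the paper leaves implicit in ``performing the change of variables backwards.'' The only cosmetic difference is that for the first inequality you apply the one-dimensional Wirtinger estimate slicewise in $x_h$ rather than rescaling the whole slab, but since the scaling of the Wirtinger constant is itself proved by this same rescaling, the two arguments are interchangeable.
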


\begin{proof}
It is enough to use the change of variables $ U(x,y,z)\,:=\,u(x,y,\ell_{\eps} z ) $ and recall that, in the set $ \bbR^2 \times \,]-1,1[\,$,
the classical Poincar\'e and Sobolev inequalities read as follows:
\begin{align*}
\int_{-1}^{1}\int_{\bbR^2}\left|U-\overline{U}\right|^2\,\dd x_h\,\dd z\,&\leq\,C\,\int_{-1}^{1} \int_{\bbR^2} \left| \partial_z U \right|^2\,\dd x_h\,\dd z \\
\left( \int_{-1}^{1}\int_{\bbR^2}\left|U\right|^6\,\dd\ x_h\,\dd z\,\right)^{1/6}\,&\leq\,C\,\left(\int_{-1}^{1} \int_{\bbR^2}
\Big(\left| U \right|^2 +\left| D U \right|^2\Big)\,\dd x_h\,\dd z\, \right)^{1/2}\,.
\end{align*}
Performing the change of variables backwards yields the claimed inequalities.
\end{proof}

From the previous Lemma \ref{l:Poincare}, we derive the following direct but fundamental consequence.
\begin{cor}
\label{cor:1}
There exists an absolute constant $C>0$ such that the following property holds true. Take $T>0$ and a couple of real numbers
$ (s, p) \in [2,+\infty]^2 $. For any couple of functions $ f_{\eps} \in L^{s}\big([0,T];L^p(\Omega_{\eps})\big) $ and
$ u_{\eps} \in L^2\big([0,T];H^1(\Omega_{\eps})\big) $, one has
\[ 
\left\|\overline{f_{\eps} u_{\eps}}\, -\, \overline{f_{\eps}} \, \overline{u_{\eps}}\right\|_{L^{\frac{2s}{s+2}}_T\left(L^{\frac{2p}{p+2}}(\bbR^2)\right)}\,\leq\,
C\,\ell_{\eps}\,\left\| \left(\overline{\left\|f(t,\cdot,x_3)\right\|^p_{L^p(\R^2)}}\right)^{\frac{1}{p}} \right\|_{L^s_T}\,
\left(\overline{\left\|Du_{\eps}(\cdot,\cdot,x_3)\right\|^2_{L^2_T(L^2(\bbR^2))}}\right)^{\frac12}
\] 
if $p<+\infty$, and if $p=+\infty$ one has instead
\[
\left\|\overline{f_{\eps} u_{\eps}}\, -\, \overline{f_{\eps}} \, \overline{u_{\eps}}\right\|_{L^{\frac{2s}{s+2}}_T\left(L^{2}(\bbR^2)\right)}\,\leq\,
C\,\ell_{\eps}\,\left\|f\right\|_{L^s_T(L^\infty(\Omega_\veps))}\,
\left(\overline{\left\|Du_{\eps}(\cdot,\cdot,x_3)\right\|^2_{L^2_T(L^2(\bbR^2))}}\right)^{\frac12}\,.
\]

In particular, when $s=p\in[2,+\infty]$, one gets
\[
\left\|\overline{f_{\eps} u_{\eps}}\, -\, \overline{f_{\eps}} \, \overline{u_{\eps}}\right\|_{L^{\frac{2p}{p+2}}_T\left(L^{\frac{2p}{p+2}}(\bbR^2)\right)}\,\leq\,
C\,\ell_{\eps}\,\left(\overline{\left\|f(t,\cdot,x_3)\right\|^p_{L^p_T(L^p(\R^2))}}\right)^{\frac1p}\,
\left(\overline{\left\|Du_{\eps}(\cdot,\cdot,x_3)\right\|_{L^2_T(L^2(\bbR^2))}}\right)^{\frac12}\,,
\]
with the same modification as before in the case $p=+\infty$.
\end{cor}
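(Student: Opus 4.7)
\medbreak

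\noindent\textbf{Proof plan.}
The starting point is the algebraic identity
\[
\overline{f_\veps\,u_\veps}\,-\,\overline{f_\veps}\;\overline{u_\veps}\;=\;\overline{f_\veps\,g_\veps}\,,\qquad\qquad g_\veps\,:=\,u_\veps\,-\,\overline{u_\veps}\,,
\]
which holds because $\overline{u_\veps}$ is independent of $x_3$, so it can be pulled into the vertical average. Observe that $\overline{g_\veps}\equiv 0$, and that $\partial_3 g_\veps = \partial_3 u_\veps$, so Lemma \ref{l:Poincare} applied to $g_\veps$ at any fixed time (integrated on $[0,T]$) gives the quantitative Poincaré-type bound
\[
\overline{\left\|g_\veps(\cdot,\cdot,x_3)\right\|^2_{L^2_T(L^2(\R^2))}}\;\leq\; C\,\ell_\veps^2\;\overline{\left\|Du_\veps(\cdot,\cdot,x_3)\right\|^2_{L^2_T(L^2(\R^2))}}\,,
\]
which is exactly the source of the $\ell_\veps$ gain appearing in the statement.

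The estimate is then obtained by a layered H\"older--Jensen argument. First, for $p<+\infty$, we set $p'=p/(p-1)$ and apply H\"older in $x_3$ to get, pointwise in $(t,x_h)$,
\[
\big|\overline{f_\veps g_\veps}(t,x_h)\big|\;\leq\;\bigl(\overline{|f_\veps|^p}\bigr)^{1/p}(t,x_h)\cdot\bigl(\overline{|g_\veps|^{p'}}\bigr)^{1/p'}(t,x_h)\,.
\]
Since $1/\bigl(2p/(p+2)\bigr)=1/p+1/2$, a H\"older inequality in $x_h$ with exponents $p$ and $2$ yields
\[
\left\|\overline{f_\veps g_\veps}(t,\cdot)\right\|_{L^{2p/(p+2)}(\R^2)}\;\leq\;\bigl(\overline{\|f_\veps(t,\cdot,x_3)\|^p_{L^p(\R^2)}}\bigr)^{1/p}\cdot \left\|\bigl(\overline{|g_\veps|^{p'}}\bigr)^{1/p'}\right\|_{L^2(\R^2)}.
\]
At this point I invoke Jensen's inequality on the inner average: since $p\geq 2$, we have $2/p'\geq1$, whence $\bigl(\overline{|g_\veps|^{p'}}\bigr)^{2/p'}\leq \overline{|g_\veps|^2}$ pointwise in $(t,x_h)$; integrating in $x_h$ shows that the last factor is bounded by $\bigl(\overline{\|g_\veps(t,\cdot,x_3)\|^2_{L^2(\R^2)}}\bigr)^{1/2}$. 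A final H\"older in time, with exponents $s$ and $2$ satisfying $1/s+1/2=(s+2)/(2s)$, produces
\[
\left\|\overline{f_\veps g_\veps}\right\|_{L^{2s/(s+2)}_T(L^{2p/(p+2)}(\R^2))}\;\leq\;\left\|\bigl(\overline{\|f_\veps\|^p_{L^p(\R^2)}}\bigr)^{1/p}\right\|_{L^s_T}\cdot \bigl(\overline{\|g_\veps(\cdot,\cdot,x_3)\|^2_{L^2_T(L^2(\R^2))}}\bigr)^{1/2},
\]
and the Poincar\'e bound recalled above converts the last factor into the claimed quantity, producing the factor $\ell_\veps$.

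For the endpoint case $p=+\infty$, the same strategy works, but is simpler: pulling out $\|f_\veps(t)\|_{L^\infty(\Omega_\veps)}$ in the vertical average and using Jensen $\overline{|g_\veps|}\leq(\overline{|g_\veps|^2})^{1/2}$ directly gives the $L^2$-in-$x_h$ control, and then H\"older in time closes the argument. The diagonal case $s=p$ is an immediate specialization (noticing that the first H\"older in time is no longer needed because both exponents coincide with $2p/(p+2)$). I do not expect any genuine obstacle in this proof: the only delicate point is to apply Jensen in the correct direction so that the exponent $p'$ disappears in favour of the $L^2$ norm of $g_\veps$ (for which we have the Poincar\'e control), and to choose consistently the H\"older pairings $(p,2)$ in $x_h$ and $(s,2)$ in $t$ so that the target Lebesgue exponents $2p/(p+2)$ and $2s/(s+2)$ come out correctly.
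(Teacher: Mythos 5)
Your argument is correct and follows the same route as the paper's: the identity $\overline{f_\veps u_\veps}-\overline{f_\veps}\,\overline{u_\veps}=\overline{f_\veps(u_\veps-\overline{u_\veps})}$, a H\"older/Jensen step separating $f_\veps$ from $g_\veps=u_\veps-\overline{u_\veps}$ at exponents $(p,2)$ in $x_h$ and $(s,2)$ in $t$, and finally the thin-domain Poincar\'e inequality of Lemma \ref{l:Poincare} to convert $\overline{\|g_\veps\|^2_{L^2_T(L^2)}}$ into $C\ell_\veps^2\,\overline{\|Du_\veps\|^2_{L^2_T(L^2)}}$. You merely make explicit the Jensen step that the paper absorbs into a single ``H\"older'' application; one tiny imprecision is your parenthetical remark that the time H\"older is ``no longer needed'' when $s=p$ --- it is still applied with exponents $(p,2)$, the only simplification being that the resulting time and space integrability indices coincide.
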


\begin{proof}
We use the definition of average to write 
\[
\overline{f_{\eps} u_{\eps}}\,  -\, \overline{f_{\eps}} \, \overline{u_{\eps}}\,=\, \overline{f_{\eps} u_{\eps}\,  -\, f_{\eps} \, \overline{u_{\eps}}} \,.
\]
Therefore, an application of the H\"older inequality yields
\[
\left\|\overline{f_{\eps} u_{\eps}}\,-\,\overline{f_{\eps}} \, \overline{u_{\eps}}\right\|_{L^{\frac{2p}{p+2}}(\R^2)}\,\leq\,
\left(\oline{\left\|f_\veps(t,\cdot,x_3)\right\|^p_{L^p(\R^2)}}\right)^{\frac1p}\,
\left(\oline{\left\|u(t,\cdot,x_3)\,-\,\oline{u}(t,\cdot)\right\|^2_{L^2(\R^2)}}\right)^{\frac12}
\]
in the case $p<+\infty$, whereas for $p=+\infty$ one simply has the bound
\[
\left\|\overline{f_{\eps} u_{\eps}}\,-\,\overline{f_{\eps}} \, \overline{u_{\eps}}\right\|_{L^{2}(\R^2)}\,\leq\,
\left\|f\right\|_{L^\infty(\Omega_\veps)}\,
\left(\oline{\left\|u(t,\cdot,x_3)\,-\,\oline{u}(t,\cdot)\right\|^2_{L^2(\R^2)}}\right)^{\frac12}\,.
\]
At this point, we conclude by taking the $L^{\frac{2s}{s+2}}$ norm in time and by an application of the Poincar\'e inequality in thin domains, see
Lemma \ref{l:Poincare} above.
\end{proof}

We are now able to deduce some information on the limit points $\rho_0$ and $u$ identified in Subsection \ref{ss:finite-en}.

\begin{prop}
\label{con:all:var}
Let $\big(\rho_{\eps}, u_{\eps}\big)_{\veps\in\,]0,1]} $ a sequence of finite energy weak solutions to system \eqref{eps:sys}-\eqref{eq:bc},
each one associated with the initial datum $\big(\rho^{in}_{\eps}, m_{\eps}^{in}\big)$ that satisfies the hypotheses fixed in Theorem \ref{Main:Theo}.
Let $\rho_0$ and $u$ be the two limit points identified in \eqref{Conv:u} above.

Then, up to the extraction of a subsequence, the following convergence properties hold true: for any $T>0$, one has

\begin{itemize}

\item $ \overline{\rho_{\eps}} \cvwstar \rho_0 $ in $ L^{\infty}\big([0,T]\times \bbR^2 \big) $ and
$ \overline{\rho_{\eps}} \longrightarrow \rho_0 $ in $ C^0\big([0,T];L^p_w(K)\big) $ for any compact $ K \subset \bbR^2 $
and any finite value of $p\in[1,+\infty[\,$;

\item $\overline{ u_{\eps,h} } \cv u $ in $ L^2\big([0,T];H^1(\bbR^2)\big) $;

\item $ \oline{u_{\eps,3}}\, \longrightarrow\, 0 $ strongly in $ L^2\big([0,T];L^2(\R^2)\big) $;
 
\item $ \overline{\rho_{\eps} u_{\eps,h}} \cvwstar \rho_{0} u $ in $ L^{\infty}\big([0,T];L^2(\mathbb{R}^2)\big)$. 

\end{itemize}
Moreover $ \rho_0 $ and $  u  $ satisfy the following properties:
\begin{equation}
\label{con:u:rho}
\divh(u)\,=\,\divh(\rho_0 \, u )\,=\,0 \qquad\qquad \mbox{ and } \qquad\qquad  \forall\,t\geq0\,,\quad \rho_0(t)\,=\,\rho_0^{in}\,,
\end{equation}
where $\rho^{in}_0$ is the reference density state introduced in \eqref{hyp:rho_0}.
\end{prop}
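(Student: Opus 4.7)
The first three convergence properties are essentially a repackaging of the work carried out in Subsection \ref{ss:finite-en}: item (a) reproduces \eqref{Conv:u} and \eqref{Cweak:conv}; item (b) combines \eqref{Conv:u} with the uniform $L^2_T(H^1)$ bound of Lemma \ref{Lemma:merc}; item (c) is \eqref{est:u_3}. Likewise, the identity $\divh(u)=0$ has already been noted in \eqref{eq:u-div-free}. The genuinely new content therefore consists of item (d), the constraint $\divh(\rho_0 u)=0$, and the time-independence identity $\rho_0(t)\equiv\rho_0^{in}$.

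To prove item (d), I would decompose
\[
\overline{\rho_\eps u_{\eps,h}}\,=\,\overline{\rho_\eps}\,\overline{u_{\eps,h}}\,+\,\Big(\overline{\rho_\eps u_{\eps,h}}\,-\,\overline{\rho_\eps}\,\overline{u_{\eps,h}}\Big)\,.
\]
The commutator on the right vanishes strongly (in $L^2_T(L^2)$, say) by Corollary \ref{cor:1} applied with $f_\eps=\rho_\eps$, exploiting $\ell_\eps\to 0^+$ together with the uniform $L^\infty$ control on $\rho_\eps$ from \eqref{est:rho_inf} and the $L^2_T(L^2)$ bound on $Du_\eps$ from Lemma \ref{Lemma:merc}. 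For the principal term $\overline{\rho_\eps}\,\overline{u_{\eps,h}}$, the strong convergence of $\overline{\rho_\eps}$ in $C^0([0,T];L^p_w(K))$ (which, via standard Aubin--Lions-type compactness and the uniform $L^\infty$ bound, is enough to pass to the limit against a weakly converging $L^2$ sequence) combined with $\overline{u_{\eps,h}}\cv u$ in $L^2_T(L^2_{\rm loc})$ gives convergence to $\rho_0 u$ in $\mc D'$. The uniform bound of $\overline{\rho_\eps u_{\eps,h}}$ in $L^\infty_T(L^2)$, inherited from Lemma \ref{Lemma:merc} via Cauchy--Schwarz applied to $\sqrt{\rho_\eps}\cdot\sqrt{\rho_\eps}\,u_\eps$, then upgrades the convergence to weak-$*$ in $L^\infty_T(L^2)$.

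To derive the constraint $\divh(\rho_0 u)=0$ I would exploit the singular character of the Coriolis operator. I take a purely horizontal, $x_3$-independent test function $\psi(t,x)=(\phi(t,x_h),0)$ with $\phi\in C^\infty_c(\R_+\times\R^2;\R^2)$ satisfying $\divh\phi=0$, which is admissible in the weak formulation of the $\eps$-scaled momentum equation. Multiplying that formulation by $\eps$ and letting $\eps\to 0^+$, the time-derivative, viscous, convective and boundary contributions are all $O(\eps)$ by the \tsl{a priori} estimates of Lemma \ref{Lemma:merc} and by Corollary \ref{cor:1} for the convective term, while the pressure contribution vanishes because $\divh\phi=0$. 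Only the Coriolis term survives and, using item (d) on the averaged Coriolis term, yields
\[
\iint_{\R_+\times\R^2} \big(\rho_0 u\big)^\perp\cdot\phi\,\dx_h\,\dt\,=\,0
\]
for every such $\phi$. Since this holds for all horizontal divergence-free $\phi$, $(\rho_0 u)^\perp$ is a horizontal gradient; applying $\curl_h$ (which annihilates gradients) and using the identity $\curl_h(v^\perp)=\divh(v)$ for planar $v$ delivers $\divh(\rho_0 u)=0$.

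Finally, the identity $\rho_0(t)\equiv\rho_0^{in}$ is obtained by passing to the limit in the averaged mass equation $\partial_t\overline{\rho_\eps}+\divh(\overline{\rho_\eps u_{\eps,h}})=0$ via items (a) and (d): in the limit one finds $\partial_t\rho_0+\divh(\rho_0 u)=0$ in $\mc D'$, and combining with the freshly proved $\divh(\rho_0 u)=0$ forces $\partial_t\rho_0=0$; the weak-in-time continuity expressed in \eqref{Cweak:conv}, together with $\overline{\rho_\eps^{in}}\cvwstar\rho_0^{in}$ coming from the assumptions of Subsection \ref{ss:assumptions}, then pins down $\rho_0(t)\equiv\rho_0^{in}$. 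I expect the most delicate step to be the justification of distributional convergence of the various bilinear quantities appearing in the procedure (above all the convective term once multiplied by $\eps$, and the product $\overline{\rho_\eps}\,\overline{u_{\eps,h}}$ discussed above): the quantitative estimate afforded by Corollary \ref{cor:1}, which effectively ``linearises'' products of averages thanks to the narrowness of the domain, is the crucial ingredient that drives the whole argument.
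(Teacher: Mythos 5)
Your proposal is correct and follows essentially the same route as the paper: Corollary \ref{cor:1} to reduce $\overline{\rho_\eps u_{\eps,h}}$ to the product of the averages, an Aubin--Lions type compactness upgrade on $\overline{\rho_\eps}$ (the paper makes this precise by obtaining strong convergence in $L^q_T(H^{-1}_{\rm loc})$, which pairs with the weak $L^2_T(H^1)$ convergence of $\overline{u_{\eps,h}}$), the $\eps$-weighted momentum equation tested against horizontal divergence-free fields to isolate the Coriolis term and extract $\divh(\rho_0 u)=0$ (the paper simply specialises to $\phi=\nabla_h^\perp\varphi$), and finally the averaged mass equation combined with the constraint to conclude $\rho_0(t)\equiv\rho_0^{in}$. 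The only minor caveat is that convergence in $C^0([0,T];L^p_w(K))$ alone does not immediately justify passing to the limit in the product; the compactness argument you invoke (as in the paper's $L^q_T(H^{-1}_{\rm loc})$ upgrade) is the step that actually does the work and should be spelled out.
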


\begin{proof}

The first three convergence properties are quite direct consequences of the energy inequality and have already been proved,
see \eqref{Conv:u}, \eqref{Cweak:conv} and \eqref{est:u_3}.

Regarding the fourth convergence property, from the energy inequality we immediately deduce that,
up to an extraction, $\overline{\rho_{\eps} u_{\eps,h}} \cvwstar k $ in $ L^{\infty}\big([0,T];L^2(\mathbb{R}^2)\big)$,
for some two-dimensional vector field $k$ belonging to that space. So, it remains to show that $ k = \rho_{0} u $. For this,
applying Corollary \ref{cor:1} with $ f_{\eps} = \rho_{\eps} $ and $s = p = +\infty $, we deduce that 
\begin{align*}
\|\overline{\rho_{\eps} u_{\eps,h}}\, -\, \overline{\rho_{\eps}} \;\overline{u_{\eps,h}}\|_{L^2_T(L^2(\bbR^2))}\, \leq\,C\,
\|\rho_{\eps} \|_{L^{\infty}_T(L^\infty(\Omega_\veps))}\,\ell_{\eps}\,\left(\overline{ \|D u_{\eps} \|^2_{L^2_T(L^2(\bbR^2))}}\right)^{\frac12}\,\leq\,C\,\ell_\veps\,,
\end{align*}
where we have also used inequality \eqref{est:rho_inf} and the bounds of Lemma \ref{Lemma:merc}. Hence,
it is enough to show that 
\[
\overline{\rho_{\eps}} \; \overline{u_{\eps,h}} \,\longrightarrow \, \rho_{0} \, u \qquad\qquad\quad \mbox{ in }\qquad \mathcal{D}'\big(\R_+\times\R^2\big)\,.
\]
Recall that, for any ball $ B_{R}\subset\R^2 $ of center $0$ and radius $R>0$, the inclusion $ H^1_0(B_{R})\,\hookrightarrow \,L^2(B_{R})$
is compact, in particular  $   L^2(B_{R})\,\hookrightarrow\, H^{-1}(B_{R}) $ is also compact.
Thus, arguing similarly as done to get \eqref{Cweak:conv}, we infer the strong convergence of $ \overline{\rho_{\eps}}$ towards $\rho_0$ in the space
$ L^q_T\big(H^{-1}(B_{R})\big)$, for any $1\leq q < + \infty $ and $ R>0 $.
On the other hand, for all test functions $\phi=\phi(t,x_h)$ belonging to $C^\infty_c\big(\R_+\times\R^2\big)$ and such that $\Supp\phi\subset [0,T[\,\times B_{R/2}$,
for some $T>0$ and $R>0$, one has that $\left(\oline{u_{\veps,h}}\,\phi\right)_\veps\Subset L^2_T\big(H^1_0(B_R)\big)$ and it weakly
converges in that space to $u\,\phi$. Combining these facts together, we finally deduce that
$ \overline{\rho_{\eps}} \; \overline{u_{\eps,h}}\, \longrightarrow \,\rho_{0} \, u $ in $\mathcal{D}'\big(\R_+ \times \bbR^2\big)$,
which in turn implies that $\overline{\rho_{\eps} u_{\eps,h}} \cvwstar \rho_{0} u  $ in $ L^{\infty}\big([0,T];L^2(\mathbb{R}^2)\big)$, as claimed.   

We now move to the proof of \eqref{con:u:rho}.
Notice that the property $\divh(u)=0$ has already been proved, keep in mind relation \eqref{eq:u-div-free} above.
Next, we recall that, for any $\veps>0$, the couple $ \big(\rho_{\eps}, u_{\eps}\big) $ satisfies equation \eqref{eps:sys} in a weak sense:
for any test function $\psi =\psi_\veps\in C^\infty_c\big(\R_+\times\oline{\Omega_\veps};\R^3\big)$ such that $\div(\psi)=0$ and $\psi\cdot n=0$
on $\d\Omega_\veps$, we have
\begin{align}
&\iint_{\R_+\times\Omega_\veps}\Big(-\rho_\veps\,u_\veps\cdot\d_t\psi\,-\,\rho_\veps\,u_\veps\otimes u_\veps:\nabla\psi\,+\,
\frac{1}{\veps}\,e_3\times\rho_\veps\,u_\veps\cdot\psi\Big)\dx\dt \label{eq:weak_eps} \\
&\qquad\quad
\,+\,\iint_{\R_+\times\Omega_\veps}\nabla u_\veps:\nabla\psi\,\dx\dt +\,2\,\alpha_\veps\iint_{\R_+\times\d\Omega_\veps}u_\veps\cdot\psi\,\dx_h\dt\,=\,
\int_{\Omega_\veps} m^{in}_\veps\cdot\psi(0,\cdot)\,\dx\,. \nonumber
\end{align}
Now, given a scalar function $ \vphi \in C^{\infty}_c\big(\R_+ \times \bbR^2\big) $, set
$ \psi \,=\,\big(\nabla^{\perp}_h \vphi ,0\big)$ and use $\veps\psi $ in \eqref{eq:weak_eps}: we get
\begin{align*}
\frac{1}{2\ell_{\eps}}\iint_{\R_+\times\Omega_\veps}\!\!\!e_3\times\rho_\veps u_\veps\cdot\psi\dx\dt&=
\frac{\eps}{2\ell_{\eps}}\iint_{\R_+\times\Omega_\veps}\!\!\!\Big(\rho_\veps u_\veps\cdot\d_t\psi+\rho_\veps u_\veps\otimes u_\veps:\nabla\psi
-\nabla u_\veps:\nabla\psi\Big)\dx\dt \\
&\qquad
-\,\alpha_\veps\frac{\eps}{\ell_{\eps}}\iint_{\R_+\times\d\Omega_\veps}u_\veps\cdot\psi\,\dx_h\dt\,+\,\frac{\eps}{2\ell_{\eps}}
\int_{\Omega_\veps} m^{in}_\veps\cdot\psi(0,\cdot)\,\dx\,.
\end{align*}  
Notice that, in view of the uniform bounds of Subsection \ref{ss:finite-en}, the right-hand side of the previous relation converges to $0$
when $\veps\ra0^+$. On the other hand, using the special structure of the test function $\psi= \big(\nabla_h^\perp\vphi,0\big)$, we gather that
\begin{align}
\label{eq:Coriolis}
\frac{1}{2\ell_{\eps}}\iint_{\R_+\times\Omega_\veps}e_3\times\rho_\veps u_\veps\cdot\psi\,\dx\dt\,&=\,
\iint_{\bbR^+\times\R^2}\overline{\rho_{\eps} u_{\eps,h}^\perp} \cdot \nabla^{\perp}_h \vphi\, \dd x_h\dt \\
&=\,\iint_{\bbR^+\times\R^2}\overline{\rho_{\eps} u_{\eps,h}} \cdot \nabla_h \vphi\,\dd x_h\dt\,. \nonumber
%
\end{align}
Taking the limit $\veps\ra0^+$ and employing the previous convergence properties, in the end we find
\begin{equation} \label{eq:div-weak}
\iint_{\bbR^+\times\R^2} \rho_{0}\,  u_h \cdot \nabla_h \vphi\,\dd x_h\dt\, =\, 0\,.
\end{equation}
By arbitrariness of the function $\vphi$, we have thus proved the first condition appearing in \eqref{con:u:rho}.

To show that also the second condition holds true, we start by recalling that $ \rho_{\eps} $ solves the first equation in \eqref{eps:sys} in the weak
sense: for any $\vphi=\vphi_\veps\in C^\infty_c\big(\R_+\times\oline{\Omega_\veps}\big)$, we have
\[ 
-\iint_{\R_+\times\Omega_\veps} \rho_\veps\,\big(\d_t\vphi\,+\,u_\veps\cdot\nabla\vphi\big)\,\dx\dt\,=\,
\int_{\Omega_\veps}\rho^{in}_\veps\,\varphi(0,\cdot)\,\dx \,.
\]
Choosing now $ \varphi= \vphi(t,x_h)  \in C^{\infty}_{c}\big(\R_+\times \bbR^2\big) $ and using again the previous convergence properties
to pass to the limit for $\veps\ra0^+$, from the previous relation we deduce
\begin{equation*}
-\iint_{\R_+\times\R^2} \Big(\rho_0\,\d_t\vphi\,+\,\rho_0\,u\cdot\nabla_h\vphi\Big)\,\dd x_h\dt\,=\,
\int_{\R^2}\rho^{in}_0\,\varphi(0,\cdot)\,\dd x_h \,.
\end{equation*} 
At this point, we use the previous property \eqref{eq:div-weak} to complete the proof.
\end{proof}

With Lemma \ref{lemma:cons:bound:data} and Proposition \ref{con:all:var} at hand, we can already complete the asymptotic study in
the situations considered in Theorems \ref{Theo:2} and \ref{Theo:3}. This is the matter of the next subsection.

\subsection{Asymptotics in some special cases} \label{ss:easy}

Here we take advantage of the analysis of the previous section to prove the convergence in some special situations, namely when the quotient
$\alpha_\veps/\ell_\veps$ diverges to $+\infty$ on the one hand, and on the other hand when the initial reference density $\rho_0^{in}$
is constant (say equal to $1$).

Those cases are the one treated in Theorems \ref{Theo:2} and \ref{Theo:3},
respectively; correspondingly, the proofs of those statements will be carried out in Paragraphs \ref{sss:teo2} and \ref{sss:teo3} below.

\subsubsection{Proof of Theorem \ref{Theo:2}} \label{sss:teo2}

We start by proving Theorem \ref{Theo:2}. This can be done with the only help of the uniform estimates of Subsection \ref{ss:finite-en}.

\begin{proof}[Proof of Theorem \ref{Theo:2}]
The claimed convergences for the density functions follow from Proposition \ref{con:all:var}. So, it remains only to show the convergence to
zero of the vertical average of the velocity fields. We start by noticing that
\begin{align}
\overline{u_{\eps}}(t,x_h)\, &=  \, \overline{u_{\eps}}(t,x_h) - u_{\eps}(t,x_h,-\ell_\veps) + u_{\eps}(t,x_h,-\ell_\veps) \label{eq:u_bar} \\
&=\, \frac{1}{2 \ell_{\eps}} \int_{-\ell_{\eps}}^{\ell_{\eps}} \int_{-\ell_{\eps}}^z \partial_3 u_{\eps}(t,x_h,\z)\,\dd\z\,\dd z +
u_{\eps}(t,x_h,-\ell_{\eps})\,.  \nonumber
\end{align}
Observe that, by the Minkowski inequality (see \tsl{e.g.} Proposition 1.3 of \cite{B-C-D}), we can estimate
\begin{align*}
\left\|\frac{1}{2\ell_{\eps}} \int_{-\ell_{\eps}}^{\ell_{\eps}} \int_{-\ell_{\eps}}^z \partial_3 u_{\eps}(t,x_h,\z)\,\dd\z\,\dd z\right\|_{L^2(\bbR^2)}
\,&\leq\,\frac{1}{2\ell_{\eps}} \int_{-\ell_{\eps}}^{\ell_{\eps}}
\left\|\int_{-\ell_{\eps}}^z \left|\partial_3 u_{\eps}(t,x_h,\z)\right|\,\dd\z\right\|_{L^2(\bbR^2)}\,\dd z \\
&\leq2\,\ell_\veps\,
\left\|\frac{1}{2\ell_{\eps}}\int_{-\ell_{\eps}}^{\ell_\veps} \left|\partial_3 u_{\eps}(t,x_h,\z)\right|\,\dd\z\right\|_{L^2(\bbR^2)}\,.
\end{align*}
Therefore, applying Jensen's inequality to this last integral, from \eqref{eq:u_bar} we deduce
\begin{align*}
\|\overline{u_{\eps}}(t) \|_{L^2(\bbR^2)}\, 
&\leq\, 2\,\ell_{\eps}\,\left(\overline{\left\|\partial_3 u_{\eps}(t,\cdot,x_3)\right\|_{L^2(\bbR^2)}^2}\right)^{1/2}\,+\,
\left\| u_{\eps}(t,\cdot,-\ell_{\eps})\right\|_{L^2(\bbR^2)}\,.
\end{align*}	
After taking the $ L^2$-norm in time over the interval $[0,T]$, the energy inequality \eqref{est:unif-en} implies 
\begin{align*}
\|\overline{u_{\eps}} \|_{L^2_T(L^2(\bbR^2))}
	&\lesssim \, \ell_{\eps}\, \left(\overline{\left\| \partial_3 u_{\eps}\right\|_{L^2_T(L^2(\bbR^2))}^2}\right)^{1/2}\, +\,
	\left\| u_{\eps}(\cdot,\cdot,-\ell_{\eps})\right\|_{L^2_T(L^2(\bbR^2))}\;\lesssim\; \ell_{\eps}  + \sqrt{\frac{\ell_{\eps}}{\alpha_{\eps}}}\,,
\end{align*}	
which converges to zero, because both $ \ell_{\eps} $ and $ \ell_{\eps}/ \alpha_{\eps}$ converge to zero by hypothesis. 
\end{proof}

\subsubsection{Proof of Theorem \ref{Theo:3}} \label{sss:teo3}

We now turn our attention to the proof of Theorem \ref{Theo:3}, devoted to the quasi-homogeneous case, \tsl{i.e.} the case when the reference
density profile $\rho_0^{in}$ equals a constant, say $1$ for simplicity.

\begin{proof}[Proof of Theorem \ref{Theo:3}]
An advantage of assuming that $ \rho_0^{in} = 1 $ is that the quantity
\[
 r_\veps\,:=\,\frac{1}{\veps}\,\big(\rho_\veps-1\big)
\]
is also transported by the velocity field $ u_{\eps} $, namely
$ r_{\eps} $ satisfies $ \partial_t r_{\eps} + \div(r_{\eps} u_{\eps}) = 0  $, related to the initial datum $ r_{\eps}^{in} = \eps^{-1}(\rho^{in}_{\eps}-1)$.
By assumption, $\big(\,\oline{r^{in}_\veps}\,\big)_\veps\,\Subset\, L^2(\R^2)\cap L^\infty(\R^2)$.
Following the proof of Proposition \ref{con:all:var}, we thus obtain
the following convergence properties, for any $T>0$ fixed:
\begin{itemize}
\item $ \overline{r_{\eps}} \cvwstar r_0 $ in $ L^{\infty}\big([0,T]\times \bbR^2 \big)\,\cap\,L^\infty\big([0,T];L^2(\R^2)\big) $ and
$ \overline{r_{\eps}} \longrightarrow r_0 $ in $ C^0\big([0,T];L^p_w(K)\big) $ for any compact $ K \subset \bbR^2 $
and for any $1\leq p<+\infty$;



\item $ \overline{r_{\eps}\, u_{\eps,h}} \cvwstar r_0\, u $ in the space $ L^{2}\big([0,T] \times \mathbb{R}^2\big) $, 
\end{itemize}
for a suitable function $r_0$ belonging to $L^\infty\big(\R_+\times\R^2\big)\cap C^0_b\big(\R_+;L^2_w(\R^2)\big)$.
It goes without saying that the convergence properties for $u_\veps$'s stated in Lemma \ref{lemma:cons:bound:data} and Proposition \ref{con:all:var}
still hold true.

It remains to show that the couple $ \big(r_0, u \big) $  solves system \eqref{final:sys} in a weak sense. To begin with, recall that
each $ r_{\eps} $ satisfies the equation 
\begin{equation*}
\int_{\Omega_{\eps}} r_{\eps}^{in}\, \varphi(0,\cdot) \, \dx + \int_{0}^T \int_{\Omega_{\eps}} r_{\eps} \partial_t \varphi \, \dx \dt  + \int_0^T \int_{\Omega_{\eps}} r_{\eps} u_{\eps} \cdot 	\nabla \varphi \, \dx \dt = 0 
\end{equation*}
for any $ \varphi = \varphi_{\eps} \in C^{\infty}_c\big([0,T[\,\times \oline{\Omega_{\eps}}\big) $, for arbitrary $T>0$.
Notice that $ \varphi = \phi/(2\ell_{\eps}) $, for some $ \phi \in C^{\infty}_c\big([0,T[\,\times \bbR^2\big) $,
is an admissible test function for the above weak fomulation; using it in the previous equality, in particular we easily deduce 
\begin{equation}
\label{equ:r:esp:bar}
\int_{\bbR^2} \overline{r_{\eps}^{in}}\, \phi(0,\cdot) \, \dx_h\, +\,\int_{0}^T \int_{\bbR^2} \overline{r_{\eps}}\,\partial_t \phi \, \dx_h\, \dt\,+\,
\int_0^T \int_{\bbR^2} \overline{r_{\eps} u_{\eps,h}} \cdot 	\nabla_h \phi \, \dx_h\, \dt\, =\, 0\,.
\end{equation} 
%
%
%
Therefore, using the convergence properties of listed at the beginning of the proof and the ones of Lemma \ref{lemma:cons:bound:data} and Proposition
\ref{con:all:var}, one deduces, in the limit for $\veps\ra0^+$, the equality
\begin{equation*}
\int_{\bbR^2} r^{in}_0\,\phi(0,\cdot) \, \dx_h \,+\,
\int_{0}^T \int_{\bbR^2} r_0 \partial_t \phi \, \dx_h\, \dt\,  +\,\int_0^T \int_{\bbR^2} r_0\, u \cdot \nabla_h \phi \, \dx_h\, \dt\, =\, 0\,. 
\end{equation*} 
This is exactly the weak formulation associated to the first equation in \eqref{final:sys}.

We will now pass to the limit in the momentum equation appearing in \eqref{eps:sys}. To do that, we rewrite its weak formulation
\eqref{eq:weak_eps} in an convenient way. 
Recall that, in \eqref{eq:weak_eps}, one has $\psi=\psi_\veps\in C^\infty_c\big(\R_+\times\oline{\Omega_\veps};\R^3\big)$,
with $\div(\psi)=0$ and $\big(\psi\cdot n\big)_{|\d\Omega_\veps}=0$.
Using now the equality $\rho_\veps\,=\,1+\veps r_\veps$,
%
we can rewrite the momentum equation as follows:
\begin{align}
&\iint_{[0,T]\times\Omega_\veps}\left(-u_\veps\cdot\d_t\psi\,-\,u_\veps\otimes u_\veps:\nabla\psi\,+\,
	\frac{1}{\veps}\,e_3\times u_\veps\cdot\psi\,+\, e_3 \times r_{\eps} u_{\eps} \cdot \psi \right)\dx\dt  \label{equ:to:lim:laal} \\
&\qquad\qquad\qquad\qquad\qquad  +\,\iint_{[0,T]\times\Omega_\veps}\nabla u_\veps:\nabla\psi\,\dx\dt\,+\,
2\,\alpha_\veps\iint_{[0,T]\times\d\Omega_\veps}u_\veps\cdot\psi\,\dx_h\dt  \nonumber \\
&\qquad\qquad \qquad\qquad\qquad\qquad \qquad\qquad \qquad\qquad  \qquad\qquad =\,
	\int_{\Omega_\veps} u^{in}_\veps\cdot\psi(0,\cdot)\,\dx\,+\,\eps\,\langle R_{\eps}, \psi \rangle\,, \nonumber
\end{align}
where the time $T>0$ is such that $\Supp\psi\subset[0,T]\times\oline{\Omega_\veps}$ and the brackets $\lan\cdot,\cdot\ran$ denote the duality product
of $\mc D'\times C^\infty_c$. The term $R_\veps$ is a small remainder, which will be treated later on and whose explicit expression reads
\begin{equation*}
\langle R_{\eps}, \psi \rangle \,=\,\iint_{[0,T]\times\Omega_\veps}\Big(r_\veps\,u_\veps\cdot\d_t\psi \, +\,r_\veps\,u_\veps\otimes u_\veps:\nabla\psi\, \Big)
\dx\dt\, +\, \int_{\Omega_{\eps}} r_{\eps}^{in}\, u_{\eps}^{in}\cdot \psi(0,\cdot)\, \dx\,. 
\end{equation*}

As done in Subsection \ref{ss:constraints}, see relation \eqref{eq:weak_eps} and below, we are going to use special test functions in \eqref{equ:to:lim:laal}.
More precisely, fix a smooth $\vphi\in C^\infty_c\big([0,T[\,\times\R^2\big)$, for some $T>0$,
and define $\psi\,=\,\big(\nabla_h^\perp\vphi,0\big)/(2\ell_\veps)$.
Remark that these special test functions make the singular part of the Coriolis term vanishing. Indeed,
using that $ e_{3} \times u_{\eps} = \big(u_{\eps,h}^{\perp},0\big)$ and $ \psi $ does not depend on the vertical variable,  we have
\begin{align*}
\frac{1}{\eps} \iint_{[0,T]\times \Omega_{\eps}}  e_{3}\times u_{\eps}\cdot \frac{\psi}{2\ell_{\eps}} \, \dx \dt
&= \, \frac{1}{\eps} \iint_{[0,T]\times \bbR^2}   \overline{u_{\eps, h}^{\perp }}\cdot \nabla^{\perp}_h \vphi \, \dx_h \dt \\
&=\,\frac{1}{\eps} \iint_{[0,T]\times \bbR^2}   \overline{u_{\eps, h}}\cdot \nabla_h \vphi \, \dx_h \dt\,,
 \end{align*}
 which of course vanishes, because
\[
\div\big(u_\veps\big)\,=\,0\qquad\qquad \Longrightarrow\qquad\qquad \divh\left(\oline{u_{\veps,h}}\right)\,=\,0\,,
\]
%
%
%
where we averaged the divergence-free condition with respect ot the vertical variable and we used that the vertical component $ u_{\eps,3} $
of the velocity field is zero on $ \partial \Omega_{\eps} $. We have thus proved that
\begin{equation}
\label{sing:term:cor:zero}
\frac{1}{\eps} \iint_{[0,T]\times \Omega_{\eps}}  e_{3}\times u_{\eps}\cdot \frac{\psi}{2\ell_{\eps}} \, \dx \dt = \frac{1}{\eps} \iint_{[0,T]\times \bbR^2}   \overline{u_{\eps, h}^{\perp }}\cdot \nabla_h^\perp\vphi \, \dx_h \dt\, =\, 0\,.
\end{equation}

Therefore, with this choice of the test function $\psi$, equation \eqref{equ:to:lim:laal} becomes
\begin{align}
&\iint_{[0,T]\times\bbR^2}\Big(-\overline{u_\veps}\cdot\d_t\psi\,-\,\overline{u_{\veps, h}\otimes u_{\veps, h}}:\nabla_h\psi_h\,+\,
\overline{r_{\eps}\, u_{\eps, h}^{\perp}} \cdot \psi_h \Big) \dx_h \dt \label{equ:to:limit:aka}  \\
&\qquad\qquad\quad
\,+\,\iint_{[0,T]\times\bbR^2}\nabla_h \overline{u_{\veps, h}}:\nabla_h \psi_h\,\dx_h\dt\,+\
\frac{\alpha_\veps}{\ell_{\eps}}\iint_{[0,T]\times\d\Omega_\veps}u_{\veps, h}\cdot\psi_h\,\dx_h\dt \nonumber \\
&\qquad\qquad\qquad\qquad\qquad\qquad\qquad\qquad\qquad\qquad=\,
\int_{\bbR^2} \overline{u^{in}_{\veps h}}\cdot\psi_h(0,\cdot)\,\dx_h \, + \eps \left\langle R_{\eps}, \frac{\psi}{2\ell_{\eps}} \right\rangle.  \nonumber 
\end{align}
With the convergences stated in the beginning of the proof, we can pass to the limit in any term appearing in \eqref{equ:to:limit:aka},
except the following three terms:
\begin{gather}
\iint_{[0,T]\times\bbR^2} \overline{u_{\veps, h}\otimes u_{\veps, h}}:\nabla_h\psi_h \, \dx_h \dt \label{term:nonlin} \\
\frac{\alpha_\veps}{\ell_{\eps}}\iint_{[0,T]\times\d\Omega_\veps}u_{\veps, h}\cdot\psi_h\,\dx_h\dt \label{term:bound}\\
\eps \left\langle R_{\eps}, \frac{\psi}{2\ell_{\eps}} \right\rangle. \label{term:rem}
\end{gather} 
We will show how to pass to the limit in the above terms separately.

Let start with \eqref{term:bound}.
Recall that, by assumption, $\lam_\veps\,:=\, \alpha_{\eps}/\ell_{\eps} \to \lambda\geq0 $. If $ \lambda > 0 $, Lemma \ref{lemma:cons:bound:data}
ensures that 
%
%
%
\begin{equation}
	\label{conv:bound:terms:aka}
	\frac{\alpha_\veps}{\ell_{\eps}}\iint_{[0,T]\times\d\Omega_\veps}u_{\veps, h}\cdot\psi_h\,\dx_h\dt\, \longrightarrow\,
	\lambda \iint_{[0,T]\times \bbR^2}\left(u^+ \,+\, u^-\right)\cdot \psi_h \, \dx_h \dt\,.
\end{equation}	
If $ \lambda = 0 $, instead, the H\"older inequality and Lemma \ref{Lemma:merc} impliy 
\begin{align*}
\left| \frac{\alpha_\veps}{\ell_{\eps}}\iint_{[0,T]\times\d\Omega_\veps}u_{\veps, h}\cdot\psi_h\,\dx_h\dt \right| &\leq
\, \sqrt{\frac{\alpha_\veps}{\ell_{\eps}}}\sqrt{\frac{\alpha_\veps}{\ell_{\eps}}}\left\|u_{\eps}\right\|_{L^2([0,T]\times \partial \Omega_{\eps})}\,
\|\psi\|_{L^2([0,T]\times \bbR^2)} \\
&\leq \, C\,\sqrt{\lam_\veps}\,\|\psi\|_{L^2([0,T]\times \bbR^2)}  \ \longrightarrow \ 0\,.
\end{align*} 

Next, we show that $ \eqref{term:rem} $ is a remainder, in the sense that it converges to zero when $\veps\ra0^+$.
More precisely, we are going to prove the following estimate:
\begin{equation}
\label{convergence:remainder}
\eps \left| \left\langle R_{\eps}, \frac{\psi}{2\ell_{\eps}} \right\rangle \right|\,\lesssim\, \eps\,
\left(\|\d_t\psi\|_{L^2([0,T]\times \bbR^2)} + \|\psi(0,\cdot)\|_{L^2( \bbR^2)} + \|\nabla \psi\|_{L^{\infty}([0,T]\times \bbR^2)} \right)\,.
\end{equation}
To prove \eqref{convergence:remainder}, we recall that $ \psi $ does not depend on the third variable to write
\begin{equation*}                                  
\left\langle R_{\eps}, \frac{\psi}{2\ell_{\eps}} \right\rangle = \iint_{[0,T]\times\bbR^2}\Big(\overline{r_\veps\,u_{\veps, h}}\cdot\d_t\psi_h \, +\,
\overline{r_\veps\,u_{\veps, h}\otimes u_{\veps, h}}:\nabla_h \psi_h\,\Big)\dx\dt+\int_{\bbR^2} \overline{r_{\eps}^{in} u_{\eps, h}^{in}}\cdot\psi_h\dx_h\,. 
\end{equation*}
First of all, it is easy to see that
\begin{align}
\left\|\overline{r_{\eps} u_{\eps}} \right\|_{L^{2}([0,T]\times \bbR^2)}^2\,
%
\leq\, \overline{\|r_\veps\, u_{\veps, h}\|_{L^2([0,T] \times \bbR^2)}^2}\,\leq\, \| r_{\eps} \|_{L^{\infty}([0,T]\times \Omega_{\eps})}^2 \overline{\|u_{\eps}\|_{L^2([0,T]\times \Omega_{\eps})}^2}\,\leq\,C\,,   \label{la:1}
\end{align}
owing to the fact that $ \| r_{\eps} \|_{L^{\infty}([0,T]\times \Omega_{\eps})}= \| r_{\eps}^{in}\|_{L^{\infty }(\Omega_{\eps})} \leq C $ and to
the bounds of Lemma \ref{Lemma:merc}. Similarly, we have
\begin{equation}
\label{la:2}
\left\|\overline{r_{\eps}^{in} u_{\eps}^{in}}\right\|_{L^2(\bbR^2)}^2 \leq \|r_{\eps}^{in}\|_{L^{\infty}(\Omega_{\eps})}\,
\overline{\left\|u_{\eps}^{in}\right\|_{L^2(\Omega_{\eps})}^2} \leq C\,.
\end{equation}
Regarding the last term appearing in the definition of $R_\veps$, we can write
\begin{equation*}
\overline{r_\veps\, u_{\veps, h}\otimes u_{\veps, h}} = \overline{r_\veps\, u_{\veps, h}}\otimes \overline{u_{\veps, h}} + 
\overline{r_\veps\, u_{\veps, h}\otimes (u_{\veps, h}-\overline{u_{\eps, h}})}\,
\end{equation*}
from which we deduce that 
\begin{align*}
\left\|\overline{r_\veps\, u_{\veps, h}\otimes u_{\veps, h}} \right\|_{L^1([0,T] \times \bbR^2)}\, &\leq \,
\left\| \overline{r_\veps\, u_{\veps, h}}\otimes \overline{u_{\veps, h}}\right\|_{L^1([0,T] \times \bbR^2)} +
\left\| \overline{r_\veps\, u_{\veps, h}\otimes \big(u_{\veps, h}-\overline{u_{\eps, h}}\big)}\right\|_{L^1([0,T] \times \bbR^2)} \\
&\lesssim\,\left\| \overline{r_\veps\, u_{\veps, h}}\right\|_{L^2([0,T] \times \bbR^2)}\,
\left\| \overline{u_{\veps,h}} \right\|_{L^2([0,T] \times \bbR^2)}  \\
&\qquad\qquad\qquad +\,\ell_{\eps} \left(\overline{\left\|r_\veps\, u_{\veps, h}\right\|_{L^2([0,T] \times \bbR^2)}^2}\right)^{1/2}\,
\left(\overline{\left\|Du_{\eps}\right\|_{L^2([0,T] \times \bbR^2)}^2}\right)^{1/2}\,,
\end{align*}
thanks to the application of the H\"older inequality and Corollary \ref{cor:1}.
Using \eqref{la:1} and Lemma \ref{Lemma:merc} finally yields the bound
\begin{equation}
\label{la:3}
\left\| \overline{r_\veps\, u_{\veps, h}\otimes u_{\veps, h}} \right\|_{L^1([0,T] \times \bbR^2)}\, \leq\, C\,. 	
\end{equation}
independent of $ \eps $. Putting \eqref{la:1}, \eqref{la:2} and \eqref{la:3} together, in turn we deduce \eqref{convergence:remainder}, as claimed.

It remains us to study the convergence of the integral in \eqref{term:nonlin}. As before, we write
\begin{equation*}
	\overline{u_{\eps, h} \otimes u_{\eps, h}}\, =\, \overline{u_{\eps, h}} \otimes \overline{u_{\eps, h}}\, +\,
	\overline{u_{\eps, h} \otimes \big(u_{\eps, h} - \overline{u_{\eps\,  h} }\big)}\,.
\end{equation*}
Corollary \ref{cor:1} (where we take $ s = p = 2 $) and the bounds of Lemma \ref{Lemma:merc} imply that 
\begin{equation*}
\left\|\overline{u_{\eps, h} \otimes \big(u_{\eps, h} - \overline{u_{\eps  h} }\big)}\right\|_{L^1([0,T]\times \bbR^2)}\,\lesssim\,\ell_{\eps}
\left(\overline{\left\| u_{\veps, h}\right\|_{L^2([0,T] \times \bbR^2)}^2}\right)^{1/2}\,
\left(\overline{\|D u_{\eps}\|_{L^2([0,T] \times \bbR^2)}^2}\right)^{1/2}\,\longrightarrow\,0
\end{equation*}
in the limit $\veps\ra0^+$. Therefore, we infer that passing to the limit in \eqref{term:rem} is equivalent to passing to the limit in 
\begin{equation*}
\iint_{[0,T] \times \bbR^2} \overline{u_{\eps, h}} \otimes \overline{u_{\eps, h}} : \nabla_h \psi_h \, \dx_h \dt\,.
\end{equation*}
We now claim that
\begin{equation}
	\label{Goal}
\iint_{[0,T] \times \bbR^2} \overline{u_{\eps, h}} \otimes \overline{u_{\eps, h}} : \nabla_h \psi_h \, \dx_h \dt\,
\longrightarrow\, \iint_{[0,T] \times \bbR^2} u \otimes u : \nabla_h \psi_h \, \dx_h \dt\,.
\end{equation}
We postpone the proof of the previous convergence to the end of the argument, see Lemma \ref{l:goal} below. For the time being,
let us assume that \eqref{Goal} holds true and let us resume the proof of Theorem \ref{Theo:3}.

What is left is to pass to the limit for  $ \eps\ra0^+ $ in system \eqref{equ:to:limit:aka}. Recall that $\psi=\big(\nabla_h^\perp\vphi,0\big)$,
for some smooth compactly supported function $\vphi$ depending only on the time and horizontal variables.
From the convergences presented at the beginning of the proof and the ones shown in \eqref{Goal}, \eqref{conv:bound:terms:aka} and
\eqref{convergence:remainder}, we deduce that, for $ \eps\ra0^+ $, equation \eqref{equ:to:limit:aka} converges to 
\begin{align*}
&\iint_{[0,T]\times\bbR^2}\Big(-u\cdot\d_t\psi_h\,-\, u \otimes u :\nabla_h\psi_h\, + \, r_0\, u^{\perp} \cdot \psi_h\,+\,\nabla_hu:\nabla_h \psi_h
\Big) \dx_h \dt \\
&\qquad\qquad\qquad\qquad\qquad\qquad\quad \,+\,
\lambda \iint_{[0,T]\times \bbR^2}\left(u^+\,+\,u^-\right)\cdot\psi_h\,\dx_h\dt\,=\,\int_{\bbR^2} u^{in} \cdot\psi_h(0,\cdot)\,\dx_h\,.
\end{align*}

Hence, to conclude the proof it remains only to show that $ u^+ = u^- = u$. For this, we write
\begin{equation}
	\label{identification:1}
u - u^{-} \, =\, u - \overline{u_{\eps, h}} \,+\, \overline{u_{\eps, h}}\, -\, u_{\eps, h}(\cdot,\cdot,-\ell_{\eps})\, +\,
u_{\eps,h}(\cdot,\cdot,-\ell_{\eps})\, -\, u^{-}\,.
\end{equation} 
The convergence \eqref{Conv:u} and Lemma \ref{lemma:cons:bound:data} ensure that
\begin{equation*}
	u - \overline{u_{\eps, h}} 	\cv 0 \qquad \text{ and }  \qquad u_{\eps, h}(\cdot,\cdot, -\ell_{\eps}) - u^{-}	\cv 0 \qquad\qquad\quad
	\text{ in } \qquad L^2\big([0,T];L^2(\bbR^2)\big)\,.
\end{equation*}
For the second term appearing in the right-hand side of \eqref{identification:1}, instead, we argue as in \eqref{eq:u_bar} and get
\begin{equation*}
\overline{u_{\eps }}-u_{\eps}(\cdot,\cdot,-\ell_{\eps})\, =\,
\frac{1}{2 \ell_{\eps}} \int_{-\ell_{\eps}}^{\ell_{\eps}} \int_{-\ell_{\eps}}^{z} \, \partial_3 u_{\eps}(\cdot,\cdot,z)\,\dd z\, \dx_3\,,
\end{equation*}
which implies (see the computation in the proof of Theorem \ref{Theo:2}) the bound
\begin{align*}
\left\|\overline{u}_{\eps} - u_{\eps}(\cdot,\cdot, -\ell_{\eps})\right\|_{L^2([0,T]\times \bbR^2)}^2\,&\lesssim\,
\ell_{\eps}^2\, \overline{\left\| \partial_3 u_{\eps} \right\|_{L^2([0,T] \times \bbR^2)}^2}\,.
\end{align*}
The uniform estimates of Lemma \ref{Lemma:merc} then guarantee the convergence to $0$ of the previous term, \tsl{i.e.} the strong
convergence of $ \overline{u}_{\eps} - u_{\eps}(\cdot,\cdot, -\ell_{\eps})\,\longrightarrow\,0 $ in $ L^{2}([0,T]\times \bbR^2) $.

In the end, we have shown that the right-hand side of \eqref{identification:1} weakly converges
to zero in $ L^2([0,T]\times \bbR^2) $. This implies that $ u = u^{-} $. The equality $ u = u^{+} $ follows similarly.
This concludes the proof of Theorem \ref{Theo:3}, provided we show the convergence property \eqref{Goal}.
\end{proof}

We now show the proof of \eqref{Goal}: this is the matter of the next lemma.

\begin{lemma} \label{l:goal}
Under the assumptions of Theorem \ref{Theo:3}, we have
\begin{equation}
	\label{L2:strong:conv}
\overline{u_{\eps, h}}\, \longrightarrow\, u \qquad\qquad \text{ strongly \ in }\quad  L^2_{\rm loc}\big(\R_+\times \bbR^2\big)\,.  
\end{equation}
In particular, the convergence \eqref{Goal} holds true: for any scalar function $\vphi\in C^\infty_c\big(\R_+\times\R^2\big)$,
after setting $\psi\,:=\,\big(\nabla_h^\perp\vphi,0\big)$, one has
\[
 \iint_{[0,T] \times \bbR^2} \overline{u_{\eps, h}} \otimes \overline{u_{\eps, h}} : \nabla_h \psi_h \, \dx_h \dt\,
\longrightarrow\, \iint_{[0,T] \times \bbR^2} u \otimes u : \nabla_h \psi_h \, \dx_h \dt
\]
in the limit $\veps\ra0^+$,
where the time $T>0$ is such that $\Supp\vphi\subset[0,T]\times\R^2$.
\end{lemma}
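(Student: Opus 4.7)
The plan is to first establish the strong convergence \eqref{L2:strong:conv}; then \eqref{Goal} follows by writing
\[
\oline{u_{\veps,h}}\otimes\oline{u_{\veps,h}}\,-\,u\otimes u\,=\,\big(\oline{u_{\veps,h}}-u\big)\otimes\oline{u_{\veps,h}}\,+\,u\otimes\big(\oline{u_{\veps,h}}-u\big)\,,
\]
and combining local strong convergence on the compact support of $\nabla_h\psi_h$ with the uniform bound of $\oline{u_{\veps,h}}$ and $u$ in $L^\infty\big([0,T];L^2(\R^2)\big)$ given by Lemma \ref{Lemma:merc} and Proposition \ref{con:all:var}.

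For \eqref{L2:strong:conv}, I would apply an Aubin-Lions type compactness argument to $v_\veps:=\oline{u_{\veps,h}}$ on any ball $B_R\subset\R^2$. The spatial regularity is already available from Lemma \ref{Lemma:merc}, namely $\big(v_\veps\big)_\veps\,\Subset\,L^2_{\rm loc}\big(\R_+;H^1(\R^2)\big)$; moreover $\divh(v_\veps)=0$, by averaging the constraint $\div(u_\veps)=0$ and using the no-penetration boundary condition. What remains is to establish a uniform bound for $\partial_t v_\veps$ in some space of the form $L^q_{\rm loc}\big(\R_+;W^{-k,p}_{\rm loc}(\R^2)\big)$.

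This time regularity will be extracted from the weak identity \eqref{equ:to:limit:aka}, which is precisely designed so that both the pressure gradient and the singular Coriolis contribution $\veps^{-1}\,e_3\times u_\veps$ have already been eliminated: the former because the test field $\psi=(\nabla_h^\perp\vphi,0)$ is divergence-free, the latter via the cancellation \eqref{sing:term:cor:zero}, which ultimately rests on $\divh(v_\veps)=0$. The remaining terms in \eqref{equ:to:limit:aka} are then uniformly bounded as linear functionals of $\psi_h=\nabla_h^\perp\vphi$: the viscous piece via $v_\veps\in L^2_t H^1$; the convective piece $\oline{u_{\veps,h}\otimes u_{\veps,h}}:\nabla_h\psi_h$ via the splitting $\oline{u_{\veps,h}\otimes u_{\veps,h}}=\oline{u_{\veps,h}}\otimes\oline{u_{\veps,h}}+\oline{u_{\veps,h}\otimes(u_{\veps,h}-\oline{u_{\veps,h}})}$, with the first summand controlled in $L^2_{t,x_h}$ by $2$-D Sobolev interpolation between $L^\infty_t L^2$ and $L^2_t H^1$ and the second summand in $L^1_{t,x_h}$ of size $O(\ell_\veps)$ via Corollary \ref{cor:1}; the Coriolis perturbation $\oline{r_\veps u_{\veps,h}^\perp}$ via the uniform bounds $r_\veps\in L^\infty(\R_+\times\Omega_\veps)$ and $u_\veps\in L^\infty_t L^2$; the boundary contribution via Lemma \ref{Lemma:merc}, together with Lemma \ref{lemma:cons:bound:data} when $\lam>0$; and the remainder term $\veps\,\langle R_\veps,\cdot\rangle$ via the estimate \eqref{convergence:remainder}. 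Reading \eqref{equ:to:limit:aka} distributionally and applying the two-dimensional Leray projector $P$ on $\R^2$ (which fixes $v_\veps$ since $\divh(v_\veps)=0$) yields the desired uniform bound for $\partial_t v_\veps$ in a suitable negative Sobolev norm.

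With spatial and time regularities in hand, Aubin-Lions applied on $[0,T]\times B_R$ gives subsequential strong convergence in $L^2\big([0,T];L^2(B_R)\big)$, and uniqueness of the weak limit identified in Proposition \ref{con:all:var} upgrades this to the full convergence $v_\veps\to u$ in $L^2_{\rm loc}(\R_+\times\R^2)$. The main obstacle lies in the third paragraph: one has to check that every source term on the right-hand side of \eqref{equ:to:limit:aka}, interpreted as a distribution in $(t,x_h)$, admits a $\veps$-uniform negative-Sobolev bound, and here the most delicate contribution is the convective term, where the thin-domain Poincar\'e inequality of Lemma \ref{l:Poincare} plays an essential role in controlling the oscillatory part $\oline{u_{\veps,h}\otimes(u_{\veps,h}-\oline{u_{\veps,h}})}$ uniformly in $\veps$.
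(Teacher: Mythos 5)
The high-level plan is reasonable, but the central step fails as written: one cannot extract a $\veps$-uniform bound on $\partial_t\,\overline{u_{\veps,h}}$ from \eqref{equ:to:limit:aka}. Look again at the remainder $\eps\left\langle R_{\eps}, \psi/(2\ell_{\eps}) \right\rangle$: it contains the term $\veps\iint \overline{r_\veps u_{\veps,h}}\cdot\partial_t\psi_h$, and the estimate \eqref{convergence:remainder} you cite controls it only by $\veps\,\|\partial_t\psi\|_{L^2}$ (plus lower-order terms). So this remainder is \emph{not} ``uniformly bounded as a linear functional of $\psi_h$'' — it is itself another pairing with $\partial_t\psi_h$. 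Moving it to the left of the equation restores the full $\iint\overline{\rho_\veps u_{\veps,h}}\cdot\partial_t\psi_h$, i.e.\ the natural quantity whose time derivative is under control after Leray projection is $\mathcal P\big(\overline{\rho_\veps u_{\veps,h}}\big)$, not $\overline{u_{\veps,h}}$. This is structurally inevitable: the momentum equation propagates $\rho_\veps u_\veps$, and we have no equation (hence no uniform bound on $\partial_t$) for $r_\veps u_\veps$ alone. A second, related issue is that $\mathcal P\big(\overline{\rho_\veps u_{\veps,h}}\big)$ is \emph{not} uniformly bounded in $L^2_t H^1_{\rm loc}$ (only its $\overline{u_{\veps,h}}$ component is), so Aubin--Lions with spatial scale $H^1$ does not apply to the projected momentum either.

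The paper navigates precisely this mismatch. It first shows $\mathcal P\big(\overline{\rho_\veps u_{\veps,h}}\big)\to u$ strongly in $C^0\big([0,T];L^2_w\big)$ (from the $L^\infty_t L^2$ bound and the uniform time-derivative bound), and then runs a Lions-style (\cite{lions:book}, Theorem 2.5, Part 2) mollification argument: it decomposes $\overline{\rho_\veps u_{\veps,h}}$ into the convolved piece $\{\overline{\rho_\veps u_{\veps,h}}\}\star\eta_n$ plus the error terms $A_1,A_2,A_3$ (commutators of vertical averaging, spatial mollification and multiplication by $\overline{\rho_\veps}$), controlling $A_1,A_3$ by $O(\ell_\veps)$ via Corollary \ref{cor:1} and $A_2$ by the Friedrichs-type estimate \eqref{111} which exploits the $L^2_t H^1$ bound on $\overline{u_{\veps,h}}$. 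Your strategy can actually be salvaged along equivalent lines, but it needs the key modification: take the time derivative bound on $w_\veps:=\mathcal P\big(\overline{\rho_\veps u_{\veps,h}}\big)$, deduce by Aubin--Lions (or Ascoli--Arzel\`a) strong compactness of $\chi_R w_\veps$ in $L^2_t H^{-\delta}$; then write $\overline{u_{\veps,h}} = w_\veps - \veps\,\mathcal P\big(\overline{r_\veps u_{\veps,h}}\big)$, where the correction tends to $0$ strongly in $L^2_{t,x}$ by \eqref{la:1}; finally interpolate the resulting strong $L^2_t H^{-\delta}_{\rm loc}$ convergence of $\overline{u_{\veps,h}}$ with its uniform $L^2_t H^1$ bound to land in $L^2_{\rm loc}$. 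The step you are missing is precisely this transfer of compactness from the momentum to the velocity, whether done by mollification (as in the paper) or by interpolation.
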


\begin{proof}
To prove \eqref{L2:strong:conv}, we apply the same strategy used to prove the $ L^2_{loc} $ strong convergence of the Galerkin approximate sequence in
the existence proof.
In the following, we only sketch the argument, and refer to the proof of Theorem 2.5 (part 2) in \cite{lions:book}.

Let start by introducing the Leray projector in two dimensions,
$ \mathcal{P}: L^2(\bbR^2;\R^2) \longrightarrow L^2_{\sigma}(\bbR^2;\R^2) $, which can be define as a Fourier multiplier by the formula
\begin{equation*}
\mathcal{F}\left(\mathcal{P} w \right)(\xi) = \mathcal{F}(w)(\xi) - \frac{\xi}{|\xi|^2} \xi \cdot \mathcal{F}(w)(\xi)
\end{equation*} 
for any two-dimensional vector field $ w $ in the Schwartz space $ \mathcal{S}(\bbR^2) $, where $ \mathcal{F} $ denotes the Fourier transform.

Next, we notice that \eqref{L2:strong:conv} is a consequence of the following fact:
\begin{equation}
	\label{conv:Progectjion}
\forall\,R > 0\,,\qquad \text{ it holds } \quad \iint_{[0,T] \times B_R} \left| \mathcal{P}(\overline{\rho_{\eps}u_{\eps, h}})\right|^2 \, \dx_h \dt \,
\longrightarrow\, \iint_{[0,T]\times B_R} |u|^2 \, \dx_h \dt\,,
\end{equation}
where $B_R$ is the ball in $\R^2$ of center $0$ and radius $R>0$ and the convergence is taken for $\veps\ra0^+$.
To see that \eqref{conv:Progectjion} implies \eqref{L2:strong:conv}, recall that $ \divh\left(\overline{u_{\eps, h}}\right) = 0 $, so
$\overline{u_{\eps, h}}\,=\,\mc P\big(\overline{u_{\eps, h}}\big)$. Hence, one can bound
\begin{align}
\iint_{[0,T]\times B_R} \left|\overline{u_{\eps, h}} \right|^2 \, \dx_h \dt\,&\leq\,
\iint_{[0,T]\times B_R} \left|\mathcal{P}\big(\overline{\rho_{\eps} u_{\eps, h}}\big) \right|^2 \, \dx_h \dt \label{useless} \\
%
&\qquad +\,
2\eps\iint_{[0,T]\times B_R}  \mathcal{P}\big(\overline{\rho_{\eps} u_{\eps, h}}\big)\cdot \mathcal{P}\big(\overline{r_{\eps} u_{\eps, h}}\big)\,\dx_h \dt
\nonumber \\
&\qquad\qquad\qquad\qquad + \eps^2 \iint_{[0,T]\times B_R} \left|\mathcal{P}(\overline{r_{\eps} u_{\eps, h}}) \right|^2 \, \dx_h \dt\,. \nonumber 
%
\end{align}
As $ \mathcal{P} $ is a bounded operator over $ L^2 $, from Proposition \ref{con:all:var} and \eqref{la:1} we infer that
\begin{align*}
\left\|\mathcal{P}(\overline{\rho_{\eps} u_{\eps, h}})\right\|_{L^2([0,T]\times \bbR^2)}\,&\leq\,
\left\|\overline{\rho_{\eps} u_{\eps, h}}\right\|_{L^2([0,T]\times \bbR^2)}\,  \leq\, C \\
\left\|\mathcal{P}(\overline{r_{\eps} u_{\eps, h}})\right\|_{L^2([0,T]\times \bbR^2)}\,&\leq\,
\left\|\overline{r_{\eps} u_{\eps, h}}\right\|_{L^2([0,T]\times \bbR^2)}\,  \leq\, C\,.
\end{align*}
Then, convergence \eqref{conv:Progectjion} and \eqref{useless} immediately imply that
\[
\forall\,R>0\,,\qquad\quad
\iint_{[0,T]\times B_R} \left|\overline{u_{\eps, h}} \right|^2 \, \dx_h \dt\,\longrightarrow\,
\iint_{[0,T]\times B_R} \left|u \right|^2 \, \dx_h \dt\qquad \mbox{ for }\quad \veps\ra0^+\,.
\]
The claimed property \eqref{L2:strong:conv} is then a consequence of the previous fact and the weak convergence $ \overline{u_{\eps h }} \cv u $ in
$ L^2([0,T]\times \bbR^2) $. 

So, let us prove \eqref{conv:Progectjion}. The starting point is again the weak fomulation of the momentum equation in
\eqref{eps:sys}, see relation \eqref{eq:weak_eps} above.
As in the previous proof, take $\psi\,=\,\big(\nabla_h^\perp\vphi,0\big)/(2\ell_\veps)$, for some smooth
$\vphi\in C^\infty_c\big([0,T[\,\times\R^2\big)$, with $T>0$.
Using the fact that such $ \psi $ does not depends on the vertical variable and taking advantage of the cancellation \eqref{sing:term:cor:zero},
we infer
\begin{align*}
&\iint_{[0,T]\times\bbR^2}\Big(-\overline{\rho_\veps\,u_{\veps, h}}\cdot\d_t\psi_h\,-\,
\overline{\rho_\veps\,u_{\veps, h}\otimes u_{\veps, h}}:\nabla_h\psi_h\,+\,
	\overline{r_\veps\,u^{\perp}_{\veps,h}}\cdot\psi_h \Big)\dx_h\dt \\
	&\quad
	\,+\,\iint_{[0,T]\times\bbR^2}\nabla_h \overline{u_{\veps, h}}:\nabla_h\psi_h\,\dx_h\dt\,+\,
\frac{\alpha_\veps}{\ell_{\eps}}\iint_{[0,T]\times\d\Omega_\veps}u_\veps\cdot\psi\,\dx_h\dt\,=\,
	\int_{\bbR^2}\overline{ m^{in}_{\veps, h}}\cdot\psi_h(0,\cdot)\,\dx_h\,.
\end{align*}
Notice that in the above equation only divergence-free functions are allowed. We deduce that
$\Big(\partial_t\mathcal{P}\big(\overline{\rho_{\eps} u_{\eps, h }}\big)\Big)_{\veps}$ is uniformly bounded in $ L^2\big([0,T]; H^{-s}(\bbR^2)\big) $,
for some $ s>0 $ large enough. Moreover $ \mathcal{P} $ is a bounded operator in $ L^2 $, from which we gather in particular that
$\Big(\mathcal{P}\big(\overline{\rho_{\eps} u_{\eps, h }}\big)\Big)_\veps$ is uniformly bounded in $ L^\infty\big([0,T]; L^2(\bbR^2)\big) $.
These two uniform boundedness properties together imply that, up to the extraction of a suitable subsequence, one has
\begin{equation}
	\label{strong:conv:pro:laal}
	\mathcal{P}\big(\overline{\rho_{\eps} u_{\eps, h}}\big)\, \longrightarrow\, u \qquad\qquad \text{ strongly \ in }\quad C^0\big([0,T];L^2_w(\bbR^2)\big)\,.
\end{equation}

Fix now $ R > 0 $ and denote by $ \chi_R $ the characteristic function of the ball $ B_R $, \tsl{i.e.} the function such that
$ \chi_R(x) =1 $ if $ |x| \leq R $ and $ 0 $ elsewhere. Let $ \eta_{n}(\cdot) = \eta(\cdot/n)\,n^{-2} $ a convolution kernel, with
$ \eta \in C^{\infty}_c\big(B_2\big) $, $ 0 \leq \eta \leq 1 $, $ \eta $ radially symmetric and of integral $ 1 $. 
Then \eqref{strong:conv:pro:laal} implies that, for any fixed $R>0$ and $n\in\N\setminus{0}$, one has
$ \big\{\chi_R \mathcal{P}\big(\overline{\rho_{\eps} u_{\eps,h}}\big)\big\} \star \eta_{n}\, \longrightarrow\, \{\chi_R u \} \star \eta_{n} $ in
$ C^0\big([0,T];L^2(\bbR^2)\big)$ when $\veps\ra0^+$,
where the symbol $\star$ denotes the convolution with respect to the space variable.
This allows us to deduce 
\begin{align}
\label{conv:regul:alala}
\iint_{[0,T]\times \bbR^2} \chi_R \mathcal{P}\big(\overline{\rho_{\eps} u_{\eps,h}}\big)\,
\Big(\mathcal{P}\big(\overline{\rho_{\eps} u_{\eps,h}}\big)\star\eta_n\Big)\, \dx_h \dt\, \longrightarrow\,
\iint_{[0,T]\times \bbR^2} \chi_R u \,\Big(u \star \eta_n\Big) \, \dx_h \dt
\end{align}
when $\veps\ra0^+$, for any fixed $R$ and $ n $.

We now claim that the property \eqref{conv:Progectjion} follows from the above convergence \eqref{conv:regul:alala}.
The proof of this implication will conclude the proof of Lemma \ref{l:goal}.

To see that \eqref{conv:regul:alala} implies \eqref{conv:Progectjion}, we notice that
\begin{equation}
	\label{com:leray}
\iint_{[0,T]\times \bbR^2} \chi_R \mathcal{P}\big(\overline{\rho_{\eps} u_{\eps, h }}\big) \cdot \mathcal{P}\big(\overline{\rho_{\eps} u_{\eps, h }}\big)
\, \dx_h \dt\, =\,\iint_{[0,T]\times \bbR^2} \mathcal{P}\Big(\chi_R \mathcal{P}\big(\overline{\rho_{\eps} u_{\eps, h }}\big)\Big) \cdot
\overline{\rho_{\eps} u_{\eps, h }} \, \dx_h \dt\,. 
\end{equation}
Next, we write the decompositon
\begin{align}
\label{decomp:in:A}
\overline{\rho_{\eps} u_{\eps, h}}\, &= \, 
%
A_1 \,+\, A_2\,+\,A_3\, +\, \{\overline{\rho_{\eps} u_{\eps, h}}\}\star \eta_{n}\,,
\end{align}
where we have defined
\begin{align*}
A_1:=\overline{\rho_{\eps} u_{\eps, h}}  - \overline{\rho_{\eps}} \, \overline{ u_{\eps, h}}\,,\quad
A_2:=\overline{\rho_{\eps}} \, \overline{ u_{\eps, h}} - \big\{\overline{\rho_{\eps}} \, \overline{ u_{\eps,h}}\big\}\star \eta_{n}\,,\quad
A_3:=\big\{\overline{\rho_{\eps}} \, \overline{ u_{\eps, h}}\big\}\star \eta_{n} - \big\{\overline{\rho_{\eps} u_{\eps, h}}\big\}\star\eta_n\,.
\end{align*}
In addition, let us denote
\begin{equation*}
	A_4\,:=\, u  - u \star \eta_{n}\,.
\end{equation*}
Using \eqref{com:leray}, \eqref{decomp:in:A} and the fact that the Leray projector $\mc P$ commutes with the convolution by $ \eta_{n} $, we get the equality
\begin{align*}
&\iint_{[0,T] \times B_R}\Big( \left|\mathcal{P}\big(\overline{\rho_{\eps} u_{\eps, h}}\big) \right|^2 - |u|^2\Big) \, \dx_h \dt \\
&\qquad\qquad=\,
\iint_{[0,T]\times \bbR^2} \mathcal{P}\Big(\chi_{R} \mathcal{P}\big(\overline{\rho_{\eps} u_{\eps,h}}\big)\Big)\cdot\big(A_1+A_2+A_3+A_4\big)\,\dx_h \dt \\
&\qquad\qquad\qquad\qquad\qquad
+\iint_{[0,T]\times \bbR^2} \Bigg( \chi_R \mathcal{P}\big(\overline{\rho_{\eps} u_{\eps,h}}\big)\,
\Big(\mathcal{P}\big(\overline{\rho_{\eps} u_{\eps,h}}\big)\star \eta_n \Big) - \chi_R u\, \Big(u \star \eta_n \Big) \Bigg)\, \dx_h \dt\,.
\end{align*}
Owing to \eqref{conv:regul:alala}, the last term of the above inequality converge to zero when $\veps\ra0^+$, for any fixed $ n $ and $R>0$.
Therefore, it remains us to show that 
 \begin{equation} \label{eq:conv_to-prove}
\lim_{R\ra+\infty}\; \limsup_{ n \to +\infty}\; \limsup_{\eps \to 0^+}
\left|\iint_{[0,T]\times \bbR^2} \mathcal{P}\Big(\chi_{R}\mathcal{P}\big(\overline{\rho_{\eps} u_{\eps, h}}\big)\Big)\cdot
\big(A_1+A_2+A_3+A_4\big)\,\dx_h\dt\right|\, =\, 0\,.
 \end{equation}
We start by showing that the terms involving $ A_1 $ and $ A_3 $ converge to zero as $ \eps $ approaches zero.  
For this, recall that the family $\Big(\mathcal{P}\big(\overline{\rho_{\eps} u_{\eps,h}}\big)\Big)_\veps $ is uniformly bounded in
$ L^{\infty}\big([0,T];L^2(\bbR^2)\big) $ and that $ \mathcal{P} $ is a bounded operator over $ L^2 $. Hence,
we deduce that also $\left( \mathcal{P}\Big(\chi_R \mathcal{P}\big(\overline{\rho_{\eps} u_{\eps,h}}\big)\Big)\right)_\veps$
is uniformly bounded in the same space. This implies that
\begin{align*}
&\Bigg|\iint_{[0,T]\times \bbR^2} \mathcal{P}\Big(\chi_{R} \mathcal{P}\big(\overline{\rho_{\eps} u_{\eps, h}}\big) \Big)\cdot
\big(A_1+A_3\big) \, \dx_h \dt  \Bigg| \\
&\qquad\qquad\qquad\qquad\qquad\qquad\qquad\qquad
\lesssim\,\left\|\overline{\rho_{\eps} u_{\eps,h}}\right\|_{L^2_T(L^2(\bbR^2))}\,
\left\|\overline{\rho_{\eps} u_{\eps, h}} - \overline{\rho_{\eps}} \, \overline{ u_{\eps, h}}\right\|_{L^2_T(L^2(\bbR^2))}\;\lesssim\;\ell_\veps\,,
%
\end{align*}
where we have used Corollary \ref{cor:1} and the bounds of Lemma \ref{Lemma:merc} in the last inequality. So, those terms satisfy
the convergence property stated in \eqref{eq:conv_to-prove}.

Next, we consider the term associated to $ A_4 $: we have
\begin{equation*}
\Bigg| \iint_{[0,T]\times \bbR^2} \mathcal{P}\Big(\chi_{R} \mathcal{P}\big(\overline{\rho_{\eps} u_{\eps,h}}\big)\Big)\cdot
\left(u-u \star \eta_n\right) \, \dx_h \dt\Bigg|\,\lesssim
\left\|\overline{\rho_{\eps} u_{\eps,h}}\right\|_{L^2_T(L^2(\bbR^2))}\,\left\|u - u \star \eta_n\right\|_{L^2_T(L^2(\bbR^2))}\,,
\end{equation*}
which obviously converges to zero when $n\ra+\infty$.

We are thus left with the term involving $A_2$. In \cite{lions:book} (see the proof of Part 2 of Theorem 2.5 therein, in particular Step 1 of that proof),
this is the most difficult term; however, here we have a simplification at our disposal, namely the fact that $\rho_\veps$ is very close to $1$.
Thus, following the idea of \cite{lions:book}, we are going to show that
%
\begin{equation*}
\sup_{\veps\in\,]0,1]}\left|\iint_{[0,T]\times \bbR^2}\mathcal{P}\Big(\chi_{R} \mathcal{P}\big(\overline{\rho_{\eps} u_{\eps,h}}\big)\Big)\cdot
\Big(\overline{\rho_{\eps}} \; \overline{ u_{\eps, h}} - \big\{\overline{\rho_{\eps}} \, \overline{ u_{\eps,h}}\big\}\star \eta_{n}\Big)\,
\dx_h \dt\right|\,\lesssim\,\de(n)\,,
\end{equation*}
for a positive function $\de(n)$ verifying $\de(n)\longrightarrow0^+$ when $n\ra+\infty$.
Recall that the sequence
$\left( \mathcal{P}\Big(\chi_R \mathcal{P}\big(\overline{\rho_{\eps} u_{\eps,h}}\big)\Big)\right)_\veps$ is uniformly bounded in
$ L^{\infty}\big([0,T];L^2(\bbR^2)\big) $.
Now, using the decomposition $\rho_\veps\,=\,1\,+\,\veps\,r_\veps$, the bounds of Lemma \ref{Lemma:merc} and the ones for $\big(\oline{r_\veps}\big)_\veps$
(recall the beginning of the proof of Theorem \ref{Theo:3}),
we see that it is enough to show that 
\begin{equation}
	\label{111}
\lim_{n \to + \infty} \sup_{\eps\in\,]0,1]}
\left\|\overline{u_{\eps, h}}\,-\,\big(\overline{ u_{\eps, h}}\star\eta_{n}\big)\right\|_{L^2_T(L^2(\bbR^2))}\,=\,0\,.
\end{equation}
The previous property is quite standard to get. Using Taylor formula, we can write
\begin{align*}
\overline{u_{\eps, h}}(t,x_h)\,-\,\big(\overline{ u_{\eps, h}}\star\eta_{n}\big)(t,x_h)\,=\,
\iint_{[0,1]\times\R^2}D_h\oline{u_{\veps,h}}(t,x_h+\s z_h)\cdot z\,\eta_n(z_h)\,\dd z_h\dd\s\,,
\end{align*}
from which we deduce that
\begin{align*}
\left\|\overline{u_{\eps, h}}(t)\,-\,\big(\overline{ u_{\eps, h}}\star\eta_{n}\big)(t)\right\|_{L^2(\R^2)}\,&\leq\,
\frac{1}{n}\iint_{[0,1]\times\R^2}\left\|D_h\oline{u_{\veps,h}}(t,\cdot+\s z_h)\right\|_{L^2}\,n\,\left|z_h\right|\,\eta_n(z_h)\,\dd z_h\dd\s \\
&\leq\,\frac{1}{n}\,\left\|D_h\oline{u_{\veps,h}}(t)\right\|_{L^2(\R^2)}\,\big\||\cdot|\,\eta\big\|_{L^1(\R^2)}\,,
\end{align*}
where we have also used the fact that the norm of the integral is less than or equal to the integral of the norm in the first inequality, and the
translation invariance of the Lebesgue measure in the second inequality. At this point, an integration in time and the use of the uniform bounds of Lemma
\ref{Lemma:merc} yield the sought property \eqref{111}.

In the end, we have concluded the proof of the lemma.
\end{proof}

\section{Proof of Theorem \ref{Main:Theo}} \label{s:proof-main}

In the present section, we complete the proof of Theorem \ref{Main:Theo}, devoted to the general case where the reference density profile $\rho_0^{in}$
is non-constant. We will borrow most of the arguments from \cite{F:G}, devoted to the two-dimensional framework. Here, the supplementary difficulty
is to handle the $3$-D geometry of the thin domain $\Omega_\veps$.

\subsection{Manipulation of the system}

In this subsection we rewrite system \eqref{eps:sys} in a more convenient way for passing to the limit for $ \eps\ra0^+ $.
To begin with, we introduce the new quantity
\[
\sigma_\veps\,:=\,\frac{1}{\veps}\,\big(\rho_\veps\,-\,\rho_0^{in}\big)\,.
\]
Of course, the family $\big(\s_\veps\big)_\veps$ is \emph{not} uniformly bounded in any sense, but the key (and somehow surprising) observation is that
the family $\big(\oline{\s_\veps}\big)_\veps$ of their vertical averages is uniformly bounded, even though in spaces of very negative regularity.

To see that this holds true, we seek an equation for $\oline{\s_\veps}$. The starting point is the weak formulation of the mass equation, which reads
\begin{equation*}
-\iint_{\R_+\Omega_{\eps}}\Big(\left(\rho_{\eps}-\rho_0^{in}\right)\partial_t \varphi\,+\,\rho_{\eps}u_{\eps}\cdot\nabla \varphi\Big)\,\dx\dt\,=\, \int_{\Omega_{\eps}}\left(\rho^{in}_{\eps}-\rho^{in}_0\right) \varphi(0,\cdot)\,\dx\,,
\end{equation*}
for any smooth test function $ \varphi\,=\,\vphi_\veps\,\in\,C^\infty_c\big(\R_+\times\oline{\Omega_\veps}\big)$.
Arguing similarly as we have done to get \eqref{equ:r:esp:bar}, we find, for any $\vphi\in C^\infty_c\big(\R_+\times\R^2\big)$, the relation
\begin{equation}
\label{equ:weak:sigma:eps}
-\iint_{\R_+\bbR^2}\Big(\eps\,\overline{\sigma_{\eps}} \partial_t \vphi \,+\,\overline{\rho_{\eps}\,u_{\eps, h}} \cdot \nabla_h \psi\Big)\,\dx\dt\,=\,
\int_{\bbR^2} \eps\, \overline{\sigma^{in}_{\eps}}\,\varphi(0,\cdot)\,\dx\,.
\end{equation}
Equality \eqref{equ:weak:sigma:eps} represents an equation for $\overline{\sigma_{\eps}} $: more precisely, it corresponds to the weak formulation of
the equation
\begin{equation}
\eps\, \partial_t \overline{\sigma_{\eps}}\, +\, \divh\big(\overline{\rho_{\eps}\, u_{\eps,h}}\big)\, =\, 0\,. \label{equ:sigm:eps}
\end{equation}

Next, we work on the weak formulation of the momentum equation, recall relation \eqref{eq:weak_eps} above.
In \eqref{eq:weak_eps}, we now use the special test function $\psi\,=\,\big(\nabla_h^\perp\vphi,0\big)/(2\ell_\veps)$, where
$\vphi\in C^\infty_c\big(\R_+\times\R^2\big)$. We notice that
\begin{align}
\label{2:riscrittura}
\int_{\Omega_\veps} m^{in}_\veps\cdot\psi(0,\cdot)\,\dx\, &=\, \int_{\bbR^2} \overline{m^{in}_\veps}\cdot \nabla_h^\perp \vphi(0,\cdot)\,\dx_h \\
\iint_{\R_+\times\Omega_{\eps}} \rho_\veps\,u_\veps\cdot\d_t\psi\, \dx \dt\, &=\,\iint_{\R_+\times\bbR^2} \overline{\rho_{\eps}\,u_{\eps,h}}\cdot
\d_t\nabla_h^{\perp} \vphi \, \dx_h \dt\,.
\label{0:riscrittura}
\end{align}
Similarly, we have
\begin{align}
\label{1:riscrittura}
&\iint_{\R_+\times\Omega_{\eps}}\Big( -\,\rho_\veps\,u_\veps\otimes u_\veps:\nabla\psi + \nabla u_\veps:\nabla\psi\Big) \, \dx\dt \\ 
	\nonumber
&\qquad\qquad\qquad\qquad =\,\iint_{\R_+\times\bbR^2}\Big(-\overline{ \rho_\veps\,u_{\veps, h}\otimes u_{\veps, h}} : \nabla_h\nabla^{\perp}_h\vphi\, +\,
\nabla_h \overline{u_{\eps, h}} : \nabla_h \nabla_h^{\perp} \vphi \Big)  \, \dx_h \dt \,.  
\end{align}
The most interesting terms are the ones associated with the boundary and the Coriolis force. For the term corresponding to the lower bottom, we have
\begin{equation}
	\label{3:riscrittura}
2 \alpha_{\eps} \iint_{\R_+\times\bbR^2} u_{\eps}(\cdot,\cdot, -\ell_{\eps}) \cdot \psi \, \dx_h\dt\, =\,
\frac{\alpha_{\eps}}{\ell_{\eps}}\,\iint_{\R_+\times\bbR^2} u_{\eps, h}(\cdot,\cdot, -\ell_{\eps}) \cdot \nabla_h^\perp \vphi \, \dx_h\dt\,. 
\end{equation}
The same computation holds if we replace $ u_{\eps}(\cdot,\cdot, -\ell_{\eps}) $ with $ u_{\eps}(\cdot,\cdot,  \ell_{\eps})$, which corresponds to
the upper boundary.
As for the Coriolis term, we argue as in \eqref{eq:Coriolis} to get, with the help of \eqref{equ:weak:sigma:eps}, the following equality:
\begin{align}
\label{4:riscrittura}
\frac{1}{\veps}\iint_{\R_+\times\Omega_{\eps}}e_3 \times \rho_{\eps}\, u_{\eps} \cdot\psi \, \dx \dt\, &= \,
\frac{1}{\veps}\iint_{\R_+\times\R^2}\overline{\rho_{\eps}\, u_{\eps, h}} \cdot \nabla_h \vphi \, \dx_h \dt  \\
&=\, \int_{\bbR^2} \overline{\sigma_{\eps}^{in}} \cdot \vphi(0,\cdot)\,\dd x_h
\,-\,\iint_{\R_+\times\bbR^2} \overline{\sigma_{\eps}} \d_t\vphi \, \dx_h \dt\,. \nonumber 
\end{align}
We can plug relations \eqref{2:riscrittura}, \eqref{0:riscrittura}, \eqref{1:riscrittura}, \eqref{3:riscrittura} and \eqref{4:riscrittura}
into \eqref{eq:weak_eps}, where we recall that we have taken $\psi\,=\,\big(\nabla_h^\perp\vphi,0\big)/(2\ell_\veps)$ as a test function.
After few integration by parts, we deduce 
%
%
\begin{align}
\label{weak:form:to:lim}
&\iint_{\R_+\times\R^2}\Big(-\left(\oline{\eta_\veps}-\oline{\s_\veps}\right)\,\d_t\vphi\,-\,
\oline{\rho_\veps u_{\veps,h}\otimes u_{\veps,h}}:\nabla_h\nabla_h^\perp\vphi\,+\,\oline{\o_\veps}\,\Delta_h\vphi\Big)\,\dd x_h\dt \\
\nonumber &\qquad\qquad\qquad\qquad\quad
-\,\frac{\alpha_{\eps}}{\ell_{\eps}}\,\iint_{\R_+\times\R^2}\big(\o_\veps^+\,+\,\o_\veps^-\big)\,\vphi\,\dd x_h\dt\,=\,
\int_{\R^2}\left(\overline{\eta_{\eps}^{in}} - \overline{\sigma_{\eps}^{in}}\right)\,\vphi(0,\cdot)\,\dd x_h\,,
\end{align}
which holds for any test function $\vphi\in C^\infty_c\big(\R_+\times\R^2\big)$ and
where we have denoted
\begin{align}
&\eta_\veps^{in}\,:=\,\curlh\left(m_{\veps,h}^{in}\right)\,,\qquad
\eta_\veps\,:=\,\curlh\big(\rho_\veps\,u_{\veps,h}\big)\,, \label{eq:notation} \\
&\qquad\qquad\qquad \omega_\veps\,:=\,\curlh\big(u_{\veps,h}\big)\,,\qquad
\o_\veps^\pm\,:=\,\curlh\big(u_{\veps,h}(\cdot,\cdot,\pm\ell_\veps)\big)\,. \nonumber
\end{align}
The integral equality \eqref{weak:form:to:lim} corresponds to the weak formulation of the following equation:
\begin{equation}
	\label{equ:to:lim}
\partial_t\left(\overline{\eta_{\eps}}- \overline{\sigma_{\eps}}\right) + \curlh\divh\big(\overline{ \rho_{\eps} u_{\eps,h} \otimes u_{\eps,h}}\big) + \frac{\alpha_{\eps}}{\ell_{\eps}}\Big(\o_{\eps}^+\,+\,\o_\veps^-\Big) -\Delta_h\oline{\o_\veps}\, =\, 0\,,
\end{equation}
related to the initial datum $\left(\overline{\eta_{\eps}}- \overline{\sigma_{\eps}}\right)_{|t=0}\,=\,\overline{\eta_{\eps}^{in}}-\overline{\sigma_{\eps}^{in}}$.

We see that equation \eqref{equ:to:lim} does not involve any fast time oscillation. It is then a convenient formulation to study the
asymptotics of the original system \eqref{eps:sys} for $\veps\ra0^+$.
Thus, in the sequel we will study the limit $ \eps\ra0^+ $ for \eqref{equ:to:lim}, or equivalently for \eqref{weak:form:to:lim}.

\subsection{Convergence of the linear terms in \eqref{equ:to:lim}}
In this subsection, we treat the convergence of the linear terms appearing in \eqref{equ:to:lim}.
First of all, we need some uniform bounds for the families $\big(\overline{\eta_{\eps}}\big)_{\veps} $ and $\big(\overline{\sigma_{\eps}}\big)_{\veps} $.

\begin{prop}
\label{conv:eta:sigma}
Under the assumptions of Theorem \ref{Main:Theo}, and with the notation introduced above, we have that
\[
\left(\oline{\eta_\veps}\right)_\veps\,\Subset\,L^\infty\big(\R_+;H^{-1}(\R^2)\big)\qquad\qquad \mbox{ and }\qquad\qquad
\left(\oline{\s_\veps}\right)_\veps\,\Subset\,L^\infty_{\rm loc}\big(\R_+;H^{-2}(\R^2)\big)\,.
\]
In particular, up to a suitable extraction, for any $T>0$ fixed one has 
\[
\overline{\eta_{\eps}} \cvwstar \curlh\big( \rho_0^{in}\, u \big)\; \mbox{ in }\  L^\infty\big([0,T];H^{-1}(\R^2)\big) \qquad \mbox{ and }\qquad
\overline{\sigma_{\eps}} \cvwstar \sigma\; \mbox{ in }\  L^\infty\big([0,T];H^{-2}(\R^2)\big)\,,
\]
for a suitable $\s\in L^\infty_{\rm loc}\big(\R_+;H^{-2}(\R^2)\big)$.
\end{prop}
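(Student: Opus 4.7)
The plan is to treat the two uniform bounds separately, the one on $\oline{\eta_\veps}$ being a direct consequence of the energy bounds, while the one on $\oline{\s_\veps}$ will come from exploiting a cancellation hidden in equation \eqref{equ:to:lim}: even though neither $\partial_t\oline{\eta_\veps}$ nor $\partial_t\oline{\s_\veps}$ is \emph{a priori} well-controlled, their difference is, because the singular Coriolis term has been absorbed precisely into $\oline{\s_\veps}$ (this was in fact the very reason for manipulating the system into the form \eqref{equ:to:lim}).

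First I would observe that $\oline{\eta_\veps}=\curlh\bigl(\oline{\rho_\veps\,u_{\veps,h}}\bigr)$ and show that $\bigl(\oline{\rho_\veps u_{\veps,h}}\bigr)_\veps\Subset L^\infty\bigl(\R_+;L^2(\R^2)\bigr)$. This follows by the pointwise inequality $\bigl|\oline{\rho_\veps u_\veps}\bigr|^2\leq\oline{\rho_\veps}\cdot\oline{\rho_\veps|u_\veps|^2}$ (Cauchy--Schwarz with respect to the $x_3$-average), combined with the $L^\infty$ bound on $\oline{\rho_\veps}$ and the $L^\infty_t L^1$ bound on $\oline{\rho_\veps|u_\veps|^2}$ from Lemma \ref{Lemma:merc}. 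Taking $\curlh$ then gives $\bigl(\oline{\eta_\veps}\bigr)_\veps\Subset L^\infty(H^{-1})$, and the identification of the limit as $\curlh(\rho_0^{in}\,u)$ follows by applying $\curlh$ to the weak-$*$ convergence $\oline{\rho_\veps u_{\veps,h}}\cvwstar\rho_0^{in}u$ supplied by Proposition \ref{con:all:var}.

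Next, for $\oline{\s_\veps}$, I would read equation \eqref{equ:to:lim} as a transport-type relation for the difference $\oline{\eta_\veps}-\oline{\s_\veps}$ and bound each right-hand side term uniformly in $L^1_{\rm loc}\bigl(\R_+;H^{-2}(\R^2)\bigr)$. The diffusion contribution $\Delta_h\oline{\o_\veps}=\Delta_h\curlh\oline{u_{\veps,h}}$ lies in $L^2_{\rm loc}(H^{-2})$ thanks to the $L^2_{\rm loc}(H^1)$ bound on $\oline{u_\veps}$. The boundary contribution $(\alpha_\veps/\ell_\veps)(\o^+_\veps+\o^-_\veps)$ is handled, under the assumption $\alpha_\veps/\ell_\veps\to\lam<+\infty$, by splitting $\alpha_\veps/\ell_\veps=\sqrt{\alpha_\veps/\ell_\veps}\cdot\sqrt{\alpha_\veps/\ell_\veps}$ and using the last estimate of Lemma \ref{Lemma:merc}, which gives a uniform $L^2_t L^2$ bound on $(\alpha_\veps/\ell_\veps)\bigl(u_\veps(\cdot,\cdot,\pm\ell_\veps)\bigr)$ and hence $L^2_{\rm loc}(H^{-1})$ on its curl.

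The main obstacle is the convective term $\curlh\divh\bigl(\oline{\rho_\veps u_{\veps,h}\otimes u_{\veps,h}}\bigr)$, for which I would need uniform control of $\oline{\rho_\veps u_{\veps,h}\otimes u_{\veps,h}}$ in $L^1_{\rm loc}(L^2(\R^2))$; the naive $L^\infty_t L^1$ bound is not sufficient to close things in $H^{-2}$. Since $\rho_\veps\leq 2\rho^*$, it is enough to control $\oline{|u_\veps|^2}$, which I would split as $\oline{|u_\veps|^2}=|\oline{u_\veps}|^2+\oline{|u_\veps-\oline{u_\veps}|^2}$. The first summand is in $L^1_{\rm loc}(L^2)$ by the 2D Sobolev embedding $H^1(\R^2)\hookrightarrow L^4(\R^2)$ applied to $\oline{u_\veps}\in L^2_{\rm loc}(H^1)$. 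For the second, the Jensen inequality $\|\oline{|v|^2}\|_{L^2(\R^2)}\leq(2\ell_\veps)^{-1/2}\|v\|_{L^4(\Omega_\veps)}^2$, 3D interpolation $\|v\|_{L^4}^2\lesssim\|v\|_{L^2}^{1/2}\|v\|_{L^6}^{3/2}$ together with Sobolev embedding, the thin-domain Poincar\'e inequality of Lemma \ref{l:Poincare} and the bound $\|u_\veps-\oline{u_\veps}\|_{H^1(\Omega_\veps)}\lesssim\|Du_\veps\|_{L^2(\Omega_\veps)}$ yield
\[
\bigl\|\oline{|u_\veps-\oline{u_\veps}|^2}\bigr\|_{L^2(\R^2)}\,\lesssim\,\|Du_\veps\|_{L^2(\Omega_\veps)}^2\,,
\]
whose time integral is then controlled (with an $O(\ell_\veps)$ factor) by Lemma \ref{Lemma:merc}. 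Putting everything together, the right-hand side of \eqref{equ:to:lim} lies uniformly in $L^1_{\rm loc}(H^{-2})$; integrating in time and using the initial bound $\oline{\eta_\veps^{in}}-\oline{\s_\veps^{in}}\in H^{-2}$ (which comes from the $L^2$ bound on $\oline{m_\veps^{in}}$ and assumption \eqref{ub:r_in} on $\oline{r_\veps^{in}}$), I would obtain $\oline{\eta_\veps}-\oline{\s_\veps}\in L^\infty_{\rm loc}(H^{-2})$ uniformly, and subtracting the $L^\infty(H^{-1})\subset L^\infty(H^{-2})$ bound on $\oline{\eta_\veps}$ the claim for $\oline{\s_\veps}$ follows. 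Extraction of the weak-$*$ limit $\s$ is then a routine application of the Banach--Alaoglu theorem.
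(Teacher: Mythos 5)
Your proof is correct and follows the same overall architecture as the paper's: you deduce $\bigl(\oline{\eta_\veps}\bigr)_\veps\Subset L^\infty(H^{-1})$ from the $L^\infty_t L^2$ bound on $\oline{\rho_\veps u_{\veps,h}}$, and then control $\oline{\s_\veps}$ by showing that $\partial_t\bigl(\oline{\eta_\veps}-\oline{\s_\veps}\bigr)$ is uniformly bounded in $L^1_{\rm loc}(H^{-2})$ via equation \eqref{equ:to:lim}, exactly as in the paper. The boundary and viscous terms are treated in the same way.

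The one place where your route genuinely differs is the convective term. The paper estimates $\|\oline{\rho_\veps u_{\veps,h}\otimes u_{\veps,h}}\|_{L^2(\R^2)}$ in a single stroke, by Jensen, the thin-domain $L^4\leq L^2$--$L^6$ interpolation inequality
$\oline{\|f(\cdot,x_3)\|_{L^4}^4}\leq\oline{\|f(\cdot,x_3)\|_{L^2}^2}^{1/2}\,\oline{\|f(\cdot,x_3)\|_{L^6}^6}^{1/2}$,
and then the thin-domain Sobolev inequality of Lemma \ref{l:Poincare} together with Young. You instead split $\oline{|u_\veps|^2}=|\oline{u_\veps}|^2+\oline{|u_\veps-\oline{u_\veps}|^2}$ and exploit the better regularity of the vertical average: the first summand goes through the standard 2D embedding $H^1(\R^2)\hookrightarrow L^4(\R^2)$ applied to $\oline{u_\veps}\in L^2_{\rm loc}(H^1)$, the second through the thin-domain Poincar\'e and Sobolev inequalities. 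This is a perfectly valid alternative; it has the minor advantage of isolating a piece that vanishes as $\veps\ra0^+$, at the price of one extra decomposition. One small quantitative slip: the intermediate inequality you state, $\bigl\|\oline{|u_\veps-\oline{u_\veps}|^2}\bigr\|_{L^2(\R^2)}\lesssim\|Du_\veps\|^2_{L^2(\Omega_\veps)}$, is missing a factor of $(2\ell_\veps)^{-1/2}$ on the right-hand side when one tracks the constants in Lemma \ref{l:Poincare}; after time integration and rewriting $\|Du_\veps\|^2_{L^2(\Omega_\veps)}=2\ell_\veps\,\oline{\|Du_\veps(\cdot,\cdot,x_3)\|^2_{L^2(\R^2)}}$, the gain is therefore $O(\ell_\veps^{1/2})$ rather than $O(\ell_\veps)$. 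This does not affect the uniform $L^1_{\rm loc}(L^2)$ bound, so the conclusion stands.
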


\begin{proof}
By definition, $\oline{ \eta_{\eps}} = \curlh\big(\overline{\rho_{\eps}u_{\eps,h}}\big) $, where
$\left( \overline{\rho_{\eps}u_{\eps,h}}\right)_\veps\,\Subset\, L^{\infty}\big(\R_+;L^2(\bbR^2)\big)$ owing to Lemma \ref{Lemma:merc}.
This implies that
$\left(\oline{\eta_{\eps}}\right)_\veps\,\Subset\, L^{\infty}\big(\R_+;H^{-1}(\bbR^2)\big)$ and that there exists $\eta$ in that space such that,
up to subsequence, we have $\oline{\eta_{\eps}}  \cvwstar \eta $ in  $ L^{\infty}(\R_+; H^{-1}(\bbR^2)) $.
It remains to identify $ \eta $ with $ \curlh\big(\rho_0^{in} u\big) $; but this is an easy consequence of the properties
stated in Proposition \ref{con:all:var}.
%

We now switch to consider the functions $\oline{\s_\veps}$. In view of the previous properties and the assumptions on the initial data,
recall in particular \eqref{ub:r_in}, to conclude it is enough to show that
$\Big( \d_t\left(\overline{\sigma_{\eps}}-\overline{\eta_{\eps}}\right)\Big)_\veps\,\Subset\, L^{1}_{\rm loc}\big(\R_+; H^{-2}(\R^2)\big) $.

In order to prove that property, we bound all the terms appearing in equation \eqref{equ:to:lim}.
First of all, the energy estimates of Lemma \ref{Lemma:merc} imply that $\big(\oline{\o_\veps}\big)\,\Subset\,L^2\big(\R_+;L^2(\R^2)\big)$
and that $\left(\o_\veps^\pm\right)_\veps\,\Subset\,L^2\big(\R_+;H^{-1}(\R^2)\big)$.
Next, consider the term $ \overline{\rho_{\eps} u_{\eps,h} \otimes u_{\eps,h}} $: by Jensen inequality and \eqref{est:rho_inf}, we can bound
\begin{equation*}
\left\| \overline{\rho_{\eps} u_{\eps,h} \otimes u_{\eps,h}}\right\|_{L^2(\bbR^2)}^2\,\leq\,
\overline{ \left\|\big(\rho_{\eps} u_{\eps,h} \otimes u_{\eps,h}\big)(\cdot,\cdot,x_3)\right\|_{L^2(\bbR^2)}^2 }\,\leq\,
4\,\big(\rho^*\big)^2\,\overline{ \left\| \big(u_{\eps} \otimes u_{\eps}\big)(\cdot,\cdot,x_3) \right\|_{L^2(\bbR^2)}^2 }\,.
\end{equation*}
We now focus on the term $ u_{\eps} \otimes u_{\eps} $. Recall that the interpolation inequality between $L^p$ spaces extend to thin domain in the following way:
\begin{equation*}
\overline{\| f_{\eps}(\cdot,x_3) \|_{L^4(\bbR^2)}^4}\leq\overline{\|f_{\eps}(\cdot,x_3)\|_{L^2(\bbR^2)}^2}^{1/2}\;
\overline{\|f_{\eps}(\cdot,x_3)\|_{L^6(\bbR^6)}^6}^{1/2}\,.
\end{equation*}
With the help of the previous bound, from the Sobolev inequality of Lemma \ref{l:Poincare} and the Young inequality, we deduce that
\begin{align*}
\overline{\left\|\big(u_{\eps} \otimes u_{\eps}\big)(\cdot,\cdot,x_3)\right\|_{L^2(\bbR^2)}^2 }\,\lesssim\,
\overline{\left\| u_{\eps}(\cdot,\cdot,x_3)\right\|_{L^4(\bbR^2)}^4 }\,\lesssim\,\left( \overline{\left\| u_{\eps}(\cdot,\cdot,x_3)\right\|_{L^2(\bbR^2)}^2}\,+\, \overline{\left\| Du_{\eps}(\cdot,\cdot,x_3)\right\|_{L^2(\bbR^2)}^2 } \right)^2\,.
\end{align*}	
This, together with the bounds of Lemma \ref{Lemma:merc}, finally implies that, for any $T>0$, one has
\begin{align*}
\left\| \overline{\rho_{\eps} u_{\eps,h} \otimes u_{\eps,h} }\right\|_{L^1_T(L^2(\bbR^2))}\,\leq\,C\,,
\end{align*}	
which yields the claimed uniform bound of $\Big(\d_t\left(\overline{\sigma}_{\eps}-\overline{\eta}_{\eps}\right)\Big)_\veps$
in $L^{1}_{\rm loc}\big(\R_+; H^{-2}(\R^2)\big)$.
\end{proof}

\begin{rmk} \label{r:hyp_sigma}
Thanks to the chain of embeddings $H^{-1}(\R^2)\,\hookrightarrow\,H^{-2}(\R^2)\,\hookrightarrow\,W^{-k,\infty}(\R^2)$ for $k>3$, we see that we could dismiss the $H^{-2}$ boundedness in assumption \eqref{ub:r_in} on $\big(\,\oline{r^{in}_\veps}\,\big)_\veps$ and prove that
$\big(\oline{\s_\veps}\big)_\veps\,\Subset\,\bigcap_{k>3}L^\infty_{\rm loc}\big(\R_+;W^{-k,\infty}(\R^2)\big)$. This would be still enough in order
to extract a weakly convergent subsequence and a limit point $\s$ belonging to that space. The rest of the proof also would work similarly, with minor modifications
(see in particular the statement of Proposition \ref{p:wave} and the interpolation argument in the proof of Proposition \ref{prop:27}).
\end{rmk}

With the convergence proved in the previous proposition, in Proposition \ref{con:all:var} and in Lemma \ref{lemma:cons:bound:data},
we can pass to the limit in all the linear terms of equation \eqref{weak:form:to:lim}.
It remains only to show how to pass to the limit in the nonlinear term, namely in the integral
\begin{equation}
\label{nlt:to:lim}
\iint_{\R_+\times\bbR^2} \overline{\rho_{\eps} u_{\eps,h} \otimes u_{\eps,h}}: \nabla_h \nabla^{\perp}_h \vphi \, \dx_h \dt\,.
\end{equation}

\subsection{Convergence of the convective term} \label{ss:convective}

Here we explain how passing to the limit for $\veps\ra0^+$ in the convective term \eqref{nlt:to:lim}.
To begin with, we show that it is actually enough to pass to the limit in the integral
\[
\iint_{\R_+\times\bbR^2} \rho_0^{in}  \, \overline{ u_{\eps,h} }\otimes \overline{  u_{\eps,h} }: \nabla_h \nabla^{\perp}_h \vphi \, \dx_h \dt\,.
\]

\begin{prop}
\label{last:prob}
Under the assumptions of Theorem \ref{Main:Theo}, we have the following convergence properties, in the limit $\veps\ra0^+$:

\begin{itemize}

\item for any $T>0$ fixed, we have $\left\|\overline{ \rho_{\eps} u_{\eps,h}\otimes u_{\eps,h} } - \overline{ \rho_{\eps}} \; \overline{ u_{\eps,h} }
\otimes \overline{  u_{\eps,h} }\right\|_{L^1([0,T]\times \mathbb{R}^2)} \longrightarrow 0 $;

\item $ \overline{ \rho_{\eps}} \, \overline{ u_{\eps,h} }\otimes \overline{  u_{\eps,h} }-\rho_0^{in} \,\overline{ u_{\eps,h} }\otimes \overline{u_{\eps,h}}\,
\longrightarrow\, 0 $ in the sense of $ \mathcal{D}'\big(\R_+\times \mathbb{R}^2\big)$.

\end{itemize}

\end{prop}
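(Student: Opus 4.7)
For the first item, I introduce the intermediate quantity $\overline{\rho_\veps u_{\veps,h}}\otimes\overline{u_{\veps,h}}$ and write
\begin{align*}
& \overline{\rho_\veps u_{\veps,h}\otimes u_{\veps,h}} - \overline{\rho_\veps}\,\overline{u_{\veps,h}}\otimes\overline{u_{\veps,h}} \\
&\quad = \bigl(\overline{\rho_\veps u_{\veps,h}\otimes u_{\veps,h}} - \overline{\rho_\veps u_{\veps,h}}\otimes \overline{u_{\veps,h}}\bigr) + \bigl(\overline{\rho_\veps u_{\veps,h}}-\overline{\rho_\veps}\,\overline{u_{\veps,h}}\bigr)\otimes\overline{u_{\veps,h}},
\end{align*}
and then apply Corollary \ref{cor:1} to each bracket separately. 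In the first, the choice $f_\veps = \rho_\veps u_{\veps,h}$ (bounded in $L^\infty_T L^2(\Omega_\veps)$ by \eqref{est:rho_inf} and Lemma \ref{Lemma:merc}) with exponents $s=\infty$ and $p=2$ yields an $O(\ell_\veps)$ bound in $L^2_T L^1(\R^2)$. In the second, the choice $f_\veps = \rho_\veps$ (bounded in $L^\infty_{t,x}$) with $s=p=\infty$ yields an $O(\ell_\veps)$ bound on the commutator $\overline{\rho_\veps u_{\veps,h}} - \overline{\rho_\veps}\,\overline{u_{\veps,h}}$ in $L^2_T L^2(\R^2)$; a Cauchy--Schwarz pairing against $\overline{u_{\veps,h}} \in L^\infty_T L^2(\R^2)$ and integration in time complete the estimate, giving $L^1([0,T]\times \R^2)$ convergence at rate $O(\sqrt{T}\,\ell_\veps)$.

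For the second item, fix $\psi \in C^\infty_c(\R_+\times\R^2;\R^{2\times 2})$ with support in $[0,T]\times K$ and set $\phi_\veps := \overline{u_{\veps,h}}\otimes\overline{u_{\veps,h}}:\psi$. By the Leibniz rule, H\"older, and Lemma \ref{Lemma:merc} (in particular the uniform bounds $\overline{u_{\veps,h}}\in L^\infty_T L^2 \cap L^2_T H^1$), one verifies that $(\phi_\veps)_\veps \Subset L^2([0,T];W^{1,1}(\R^2))$, with support contained in $[0,T]\times K$. Introducing a 2D mollifier $\eta_\delta$ in the horizontal variable and setting $\phi_\veps^\delta := \phi_\veps\star_{x_h}\eta_\delta$, the standard Friedrichs estimate for $W^{1,1}$ functions and Young's convolution inequality give, uniformly in $\veps$,
\[
\|\phi_\veps - \phi_\veps^\delta\|_{L^1([0,T]\times\R^2)}\leq C\,\delta,\qquad \|\phi_\veps^\delta\|_{L^2([0,T];H^1(K'))}\leq C\,\delta^{-1},
\]
where $K'\supset K$ is an enlarged compact absorbing all $\delta$-neighbourhoods with $\delta\leq\delta_0$.

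The conclusion then follows by splitting
\[
\iint(\overline{\rho_\veps}-\rho_0^{in})\phi_\veps\,\dd x_h\,\dd t = \iint(\overline{\rho_\veps}-\rho_0^{in})(\phi_\veps-\phi_\veps^\delta)\,\dd x_h\,\dd t + \iint(\overline{\rho_\veps}-\rho_0^{in})\phi_\veps^\delta\,\dd x_h\,\dd t.
\]
The first integral is controlled by $\|\overline{\rho_\veps}-\rho_0^{in}\|_{L^\infty}\cdot C\delta\leq C'\delta$ uniformly in $\veps$; the second, via $H^{-1}/H^1$ duality in space and H\"older in time, is controlled by $\|\overline{\rho_\veps}-\rho_0^{in}\|_{L^2([0,T];H^{-1}(K'))}\cdot C\,\delta^{-1}$, which vanishes as $\veps\to 0^+$ for each fixed $\delta>0$, thanks to the strong convergence $\overline{\rho_\veps}\to\rho_0^{in}$ in $L^q_T H^{-1}(K')$ already established in the proof of Proposition \ref{con:all:var}. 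Sending first $\veps\to 0^+$ and then $\delta\to 0^+$ closes the argument. The main obstacle is really the decoupling of the two scales: the mollification error is tamed only by the crude $L^\infty$ bound on $\overline{\rho_\veps}-\rho_0^{in}$ (which by itself is not small in $\veps$), while the strong $L^2_T H^{-1}$ compactness produces smallness in $\veps$ at the price of a constant $C_\delta\sim\delta^{-1}$ on the smooth piece; the Friedrichs splitting is precisely what decouples these two ingredients.
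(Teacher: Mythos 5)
Your proof of the first item is essentially the paper's: the same splitting $\overline{\rho_\veps u_{\veps,h}\otimes u_{\veps,h}} - \overline{\rho_\veps}\,\overline{u_{\veps,h}}\otimes\overline{u_{\veps,h}} = \big(\overline{\rho_\veps u_{\veps,h}\otimes u_{\veps,h}} - \overline{\rho_\veps u_{\veps,h}}\otimes\overline{u_{\veps,h}}\big) + \big(\overline{\rho_\veps u_{\veps,h}}-\overline{\rho_\veps}\,\overline{u_{\veps,h}}\big)\otimes\overline{u_{\veps,h}}$, followed by Corollary~\ref{cor:1}; the paper applies it with $s=p=2$ and the choices $f_\veps = \rho_\veps u_{\veps,h}$ and $f_\veps=\rho_\veps\overline{u_{\veps,h}}$, whereas you use $(s,p)=(\infty,2)$ and $(s,p)=(\infty,\infty)$. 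Either exponent bookkeeping yields an $O(\ell_\veps)$ bound in $L^1$. For the second item you take a genuinely different route: the paper observes that $\big(\overline{u_{\eps}}\otimes\overline{u_{\eps}}\big)_\veps \Subset L^1_T H^{1-\delta}$ for every $\delta>0$ (by $L^2_T H^1$ boundedness of $\overline{u_{\veps,h}}$ and the quasi-algebra property of $H^1$ in $2$-D), and combines this with the Aubin--Lions compactness $\overline{\rho_\veps}\to\rho_0^{in}$ in $C^0_T\big(H^{-\gamma}(K)\big)$; you instead bound $\phi_\veps = \overline{u_{\veps,h}}\otimes\overline{u_{\veps,h}}:\psi$ in a $W^{1,1}$-type space and run an explicit Friedrichs mollification to decouple the crude $L^\infty$ control from the strong negative-regularity compactness. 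Both arguments hinge on the same trade-off between the two scales, and yours is more elementary (no fractional Sobolev spaces, only Young and Friedrichs), at the cost of being more verbose.

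One small inaccuracy to flag: you invoke the bound $\overline{u_{\veps,h}}\in L^\infty_T L^2(\R^2)$, but Lemma~\ref{Lemma:merc} only gives $\big(\overline{u_\veps}\big)_\veps \Subset L^2_{\rm loc}\big(\R_+;H^1(\R^2)\big)$ and $\overline{u_\veps}\in L^\infty_T\big(L^1+L^2\big)$ (the split $u_\veps = u_\veps^1+u_\veps^2$ controlling vacuum only yields $L^1$ on the small-density part). Consequently $\phi_\veps$ is uniformly bounded in $L^1_T W^{1,1}(K)$ rather than $L^2_T W^{1,1}(K)$: indeed $|\overline{u_{\veps,h}}|^2, |\nabla\overline{u_{\veps,h}}||\overline{u_{\veps,h}}| \in L^1_T L^1(K)$ follow from $\overline{u_{\veps,h}}\in L^2_T H^1(K)$ alone. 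This changes the time exponents ($\|\phi_\veps-\phi_\veps^\delta\|_{L^1_T L^1}\lesssim\delta$, $\|\phi_\veps^\delta\|_{L^1_T H^1(K')}\lesssim\delta^{-2}$, and you pair against $\overline{\rho_\veps}-\rho_0^{in}$ in $L^\infty_T H^{-1}(K')$, where Aubin--Lions also gives strong convergence) but does not affect the conclusion, since the argument is purely qualitative in $\delta$ and $\veps$.
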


\begin{proof}
We start by writing 
\begin{align*}
\overline{ \rho_{\eps} u_{\eps} \otimes u_{\eps} } - \overline{ \rho_{\eps}} \, \overline{ u_{\eps} }\otimes \overline{  u_{\eps} } = & \,  \overline{ \rho_{\eps} u_{\eps} \otimes u_{\eps} } - \overline{ \rho_{\eps}  u_{\eps} }\otimes \overline{  u_{\eps} } + \overline{ \rho_{\eps} u_{\eps} } \otimes \overline{ u_{\eps} } - \overline{ \rho_{\eps}} \, \overline{ u_{\eps} }\otimes \overline{  u_{\eps} } \\ 
= & \, \overline{ \rho_{\eps} u_{\eps} \otimes \left(u_{\eps}-\oline{u_\veps}\right) } +
\overline{  \left(u_{\eps}-\overline{u_{\eps}}\right)  \otimes \rho_{\eps} \overline{u_{\eps}} }\,.
\end{align*}
Then, an application of Corollary \ref{cor:1} with for $s = p = 2 $ yields the estimate
\begin{align*}
&\left\| \overline{ \rho_{\eps} u_{\eps} \otimes u_{\eps} }-
\overline{ \rho_{\eps}} \, \overline{ u_{\eps} }\otimes \overline{  u_{\eps} }\right\|_{L^1([0,T]\times \mathbb{R}^2)} \\
&\qquad\qquad\lesssim \,\ell_{\eps}\,
\overline{\left\|D u_{\eps}(\cdot,\cdot,x_3)\right\|_{L^2_T(L^2(\mathbb{R}^2))}^2}^{1/2} \\
&\qquad\qquad\qquad\qquad
\times\,\left(\overline{\left\| \big(\rho_{\eps} u_{\eps}\big)(\cdot,\cdot,x_3) \right\|_{L^2_T(L^2(\mathbb{R}^2))}^2}^{1/2}+
\overline{\left\|  \big(\rho_{\eps} \, \overline{ u_{\eps} }\big)(\cdot,\cdot,x_3)\right\|_{L^2_T(L^2(\mathbb{R}))}^2}^{1/2} \right)\,. 
\end{align*}
Using the uniform estimates of Lemma \ref{Lemma:merc} and \eqref{est:rho_inf}, which imply in particular
\begin{equation*}
\overline{\left\|\big(\rho_{\eps} \overline{u_{\eps}}\big)(\cdot,\cdot,x_3)\right\|_{L^2_T(L^2(\bbR^2))}^2}\,\leq\,4\,\left(\rho^*\right)^2\,
\left\|\overline{u_{\eps}}\right\|_{L^2_T(L^2(\bbR^2))}^2\,\leq\,C\,,
\end{equation*}
we deduce that the right-hand side of the previous inequality converges to $0$ when $\veps\ra0^+$.

For the second point, let us recall that for any compact $ K \subset \bbR^2 $ with boundary smooth enough and any time $T>0$,
the density  $\big(\overline{\rho_{\eps}}\big)_\veps $ is uniformly bounded in $ L^{\infty}\big([0,T];L^p(K)\big) $ and
$\big( \partial_t \overline{\rho_{\eps}}\big)_\veps $ is bounded in $ L^2\big([0,T];H_0^{-s}(K)\big) $ for $ s>0 $ large enough.
Hence, the Aubin-Lions theorem ensures that, up to a suitable extraction (which we omit here), $\big( \overline{\rho_{\eps}}\big)_\veps $ converges
strongly to $ \rho_0^{in} $ in the space $ C^0\big([0,T];H^{-\gamma}(K)\big) $, for any $ \gamma > 0 $. This fact, together with the uniform boundedness
of $\big( \overline{u_{\eps}} \otimes \overline{u_{\eps}}\big)_\veps $ in the space $\bigcap_{\de>0} L^1\big([0,T];H^{1-\de}\big)$ ensures that the difference
$ \overline{ \rho_{\eps}} \, \overline{ u_{\eps} }\otimes \overline{  u_{\eps} }\,-\,\rho^{in}_0 \,\overline{ u_{\eps} }\otimes \overline{  u_{\eps} }$
converges to $0$ when $\veps\ra0^+$ in the sense of
$\mathcal{D}'\big(\R_+\mathbb{R}^2\big)$.
\end{proof}

What it remains is to explain how to pass to the limit in the term
\begin{equation} \label{eq:pass-to-lim}
\iint_{\R_+\times\bbR^2} \rho_0^{in}  \, \overline{ u_{\eps,h} }\otimes \overline{  u_{\eps,h} } : \nabla_h\nabla_h^\perp\vphi\,\dd x_h\dt\,=\,
-\iint_{\R_+\times\bbR^2}\divh\big(\rho_0^{in}  \, \overline{ u_{\eps,h} }\otimes \overline{  u_{\eps,h} }) \cdot \nabla_h^\perp\vphi\,\dd x_h\dt\,.
\end{equation}
To do this, we follow the analogous argument used for the $ 2$-D setting, see Paragraph 5.2.2 of \cite{F:G}.
The proof is based on a regularization argument which is technical so we will use it only on the term on which it is essential.

\subsubsection{Preliminary reductions} \label{sss:reductions}

In order to treat the convergence of the integral in \eqref{eq:pass-to-lim}, we start by computing
\begin{align}
\label{rew:equ:term:7}
\divh\left(\rho_0^{in}  \, \overline{ u_{\eps,h} }\otimes \overline{  u_{\eps,h} }\right)\,&= \,
\rho_0^{in}\, \overline{u_{\eps,h}} \cdot \nabla_h\overline{u_{\eps,h}}\,+\,\left(\nabla_h \rho_0^{in} \cdot \overline{u_{\eps,h}}\right)\,\overline{u_{\eps,h}} \\
&=\, \frac{1}{2}\,\rho_0^{in}\, \nabla_h\left|\overline{u_{\eps,h}}\right|^2\, +\, \rho_0^{in}\,\oline{\omega_{\eps,h}}\, \overline{u_{\eps,h}^\perp}\, +\,
\left(\nabla_h \rho_0^{in} \cdot \overline{u_{\eps,h}}\right)\, \overline{u_{\eps,h}} \nonumber \\
&=\,  \frac{1}{2}\, \rho_0^{in}\,\nabla_h\left|\overline{u_{\eps,h}}\right|^2\, +\,\curlh\left(\rho_0^{in}\,\overline{u_{\eps,h}}\right)\, \overline{u_{\eps,h}^\perp}
\,+\,\Theta_{\veps}\,, \nonumber
\end{align} 
where we have used the notation introduced in \eqref{eq:notation} and we have defined
\[
\Theta_\veps\,:=\,\,-\,\left(\nabla_h^{\perp} \rho_0^{in} \cdot \overline{u_{\eps,h}}\right)\, \overline{u_{\eps,h}^\perp}\,+\,
\left(\nabla_h \rho_0^{in} \cdot \overline{u_{\eps,h}}\right)\, \overline{u_{\eps,h}}\,.
\]

We have the next result.
\begin{lemma} \label{l:first-approx}
There exist two distributions $ \Gamma_1 $ and $ \Gamma_2$ in $\mathcal{D}'\big(\R_+\times \bbR^2\big ) $ such that the following convergence properties hold true:
for any $\vphi\in C^\infty_c\big(\R_+\times\R^2\big)$, in the limit $\veps\ra0^+$ one has
\begin{align}
\label{conv:Gamma1}
\iint_{\R_+\times\bbR^2}\frac{1}{2}\, \rho_0^{in}\,\nabla_h\left|\overline{u_{\eps,h}}\right|^2 \cdot \nabla^{\perp}_{h}\vphi\, \dx_h \dt\,&\longrightarrow\,
\langle \rho_0^{in} \nabla_h \Gamma_1, \nabla^{\perp}_{h} \vphi \rangle  \\
\label{conv:Gamma2}
\iint_{\R_+\bbR^2} \Theta_\veps \cdot \nabla_h^{\perp}\vphi \, \dx_h \dt\, &\longrightarrow\,\langle \rho_0 \nabla_h \Gamma_2, \nabla_h^{\perp} \psi \rangle\,.
\end{align}
\end{lemma}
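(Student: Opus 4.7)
The plan is to exploit a striking algebraic identity satisfied by $\Theta_\eps$, which will reduce both convergences to a single weak compactness argument. Writing $\overline{u_{\eps,h}}=(u_1,u_2)$ and $\rho_0^{in}=\rho$, one has $\overline{u_{\eps,h}^\perp}=(-u_2,u_1)$ and $\nabla_h^\perp\rho=(-\partial_2\rho,\partial_1\rho)$, so a direct component-wise expansion of $-(\nabla_h^\perp\rho\cdot u)u^\perp+(\nabla_h\rho\cdot u)u$ produces the vector $\bigl(\partial_1\rho\,(u_1^2+u_2^2),\,\partial_2\rho\,(u_1^2+u_2^2)\bigr)$; hence I would first establish that
\[
\Theta_\eps\,=\,\bigl|\overline{u_{\eps,h}}\bigr|^{2}\,\nabla_h\rho_0^{in}\,.
\]
Together with the obvious identity $\frac{1}{2}\rho_0^{in}\nabla_h|\overline{u_{\eps,h}}|^{2}=\rho_0^{in}\nabla_h\Gamma_1^\eps$ with $\Gamma_1^\eps:=\frac12|\overline{u_{\eps,h}}|^{2}$, both statements of the lemma are then governed by the single scalar quantity $|\overline{u_{\eps,h}}|^2$.

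Next I would produce a weak-$*$ limit for $|\overline{u_{\eps,h}}|^2$ up to extraction. From Lemma \ref{Lemma:merc} we have $(\overline{u_{\eps,h}})_\eps\Subset L^2_{\rm loc}(\R_+;H^1(\R^2))$; combining this with the two-dimensional Sobolev embedding $H^1(\R^2)\hookrightarrow L^q(\R^2)$ (for every $q<+\infty$) yields that $(|\overline{u_{\eps,h}}|^2)_\eps$ is uniformly bounded in $L^1_{\rm loc}(\R_+;L^q(\R^2))$. Passing to a subsequence, I would obtain a limit $V\in L^1_{\rm loc}(\R_+;L^q_{\rm loc}(\R^2))\subset\mathcal D'(\R_+\times\R^2)$, together with the weak convergence
\[
\iint_{\R_+\times\R^2}\bigl|\overline{u_{\eps,h}}\bigr|^{2}\,\nabla_h\rho_0^{in}\cdot\nabla_h^\perp\varphi\,\dx_h\dt\,\longrightarrow\,\iint_{\R_+\times\R^2}V\,\nabla_h\rho_0^{in}\cdot\nabla_h^\perp\varphi\,\dx_h\dt
\]
for every $\varphi\in C^\infty_c(\R_+\times\R^2)$, because $\nabla_h\rho_0^{in}\cdot\nabla_h^\perp\varphi$ is smooth and compactly supported (we use here that $\rho_0^{in}\in C^2_b(\R^2)$).

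Then I would set $\Gamma_1:=V/2$ and $\Gamma_2:=-V$. For \eqref{conv:Gamma1}, since $\divh(\nabla_h^\perp\varphi)=0$, an integration by parts gives
\[
\iint_{\R_+\times\R^2}\tfrac{1}{2}\rho_0^{in}\nabla_h\bigl|\overline{u_{\eps,h}}\bigr|^{2}\cdot\nabla_h^\perp\varphi\,\dx_h\dt\,=\,-\iint_{\R_+\times\R^2}\tfrac{1}{2}\bigl|\overline{u_{\eps,h}}\bigr|^{2}\,\nabla_h\rho_0^{in}\cdot\nabla_h^\perp\varphi\,\dx_h\dt\,,
\]
which by the weak convergence above tends to $-\frac12\iint V\,\nabla_h\rho_0^{in}\cdot\nabla_h^\perp\varphi$; interpreting the distributional pairing $\langle\rho_0^{in}\nabla_h\Gamma_1,\nabla_h^\perp\varphi\rangle$ through the same integration by parts, this limit is exactly $\langle\rho_0^{in}\nabla_h\Gamma_1,\nabla_h^\perp\varphi\rangle$. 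For \eqref{conv:Gamma2} the algebraic identity from Step 1 reduces the integral to $\iint|\overline{u_{\eps,h}}|^2\nabla_h\rho_0^{in}\cdot\nabla_h^\perp\varphi\,\dx_h\dt$, which converges to $\iint V\nabla_h\rho_0^{in}\cdot\nabla_h^\perp\varphi\,\dx_h\dt=\langle\rho_0^{in}\nabla_h\Gamma_2,\nabla_h^\perp\varphi\rangle$ with $\Gamma_2=-V$.

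The main (and essentially only) obstacle is the algebraic identification of $\Theta_\eps$ as $|\overline{u_{\eps,h}}|^2\nabla_h\rho_0^{in}$; once this is recognised, no regularisation procedure or analysis of the wave system is needed, and the convergence follows from uniform Sobolev bounds already established and a routine weak-compactness extraction. Note that both $\Gamma_j$ are then defined along the same extracted subsequence, so there is no compatibility issue between the two convergences.
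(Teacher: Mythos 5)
Your algebraic observation that
\[
\Theta_\eps\,=\,-\big(\nabla_h^\perp\rho_0^{in}\cdot\overline{u_{\eps,h}}\big)\,\overline{u_{\eps,h}}^\perp\,+\,\big(\nabla_h\rho_0^{in}\cdot\overline{u_{\eps,h}}\big)\,\overline{u_{\eps,h}}\,=\,\big|\overline{u_{\eps,h}}\big|^2\,\nabla_h\rho_0^{in}
\]
is correct: the identity $(v\cdot u)\,u-(v^\perp\cdot u)\,u^\perp=|u|^2\,v$ holds for arbitrary $u,v\in\R^2$, with no non-degeneracy requirement on $v$. This is a genuine streamlining of the paper's argument. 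The paper introduces the cut-off $b_M(|\nabla_h\rho_0^{in}|)$ precisely to stay away from the degenerate set $\{\nabla_h\rho_0^{in}=0\}$, estimates the $b_M$ piece in $L^1$ by $O(1/M)$, and on the complement rewrites $\Theta_\eps$ via the orthogonal decomposition, obtaining $\frac{(\nabla_h\rho_0^{in}\cdot\overline{u_{\eps,h}})^2+(\nabla_h^\perp\rho_0^{in}\cdot\overline{u_{\eps,h}})^2}{|\nabla_h\rho_0^{in}|^2}\,\nabla_h\rho_0^{in}$ — which is again $|\overline{u_{\eps,h}}|^2\nabla_h\rho_0^{in}$, only written in a form that appears singular at $\nabla_h\rho_0^{in}=0$. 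You notice the singularity cancels identically, so the cut-off, the $O(1/M)$ estimate, and the diagonal extraction in $M$ are all unnecessary for this particular lemma; the whole thing reduces, as you say, to a single weak-$*$ extraction for the scalar $|\overline{u_{\eps,h}}|^2$, which also guarantees without fuss that $\Gamma_1$ and $\Gamma_2$ are obtained along the same subsequence.

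One point needs correction. From $(\overline{u_{\eps,h}})_\eps\Subset L^2_{\rm loc}(\R_+;H^1(\R^2))$ you do get $(|\overline{u_{\eps,h}}|^2)_\eps$ bounded in $L^1_{\rm loc}(\R_+;L^p(\R^2))$ for every finite $p$, but a bounded sequence in $L^1$ need not admit a weakly convergent subsequence in $L^1$, so you cannot conclude $V\in L^1_{\rm loc}(\R_+;L^q_{\rm loc}(\R^2))$. What the $L^1$ bound \emph{does} give is weak-$*$ sequential compactness in $\mathcal{M}([0,T]\times B_R)$, so the limit $V$ is in general a locally finite Radon measure — this is exactly where the paper's $\Gamma_2$ lives. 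The passage to the limit in your integrals remains legitimate because $\nabla_h\rho_0^{in}\cdot\nabla_h^\perp\vphi$ is continuous and compactly supported (using $\rho_0^{in}\in C^2_b$), and since the statement only requires $\Gamma_1,\Gamma_2\in\mathcal{D}'(\R_+\times\R^2)$, replacing ``$V\in L^1_{\rm loc}$'' by ``$V$ is a locally finite measure'' leaves your argument intact. Incidentally, the Sobolev embedding $H^1\hookrightarrow L^q$ is not really needed here: the $L^2_{\rm loc}(\R_+;L^2)$ bound already puts $|\overline{u_{\eps,h}}|^2$ in $L^1_{\rm loc}$, and the obstruction is the $L^1$ integrability in time, which the spatial gain does not cure.
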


\begin{proof}
The convergence \eqref{conv:Gamma1} is a simple consequence of the uniform boundedness of $\left(\overline{u_{\eps}}\right)_\veps $ in
$L^2_{\rm loc}\big(\R_+;L^2(\bbR^2)\big)$. So, let us focus on \eqref{conv:Gamma2}.

In order to treat the term $\Theta_\veps$, we need to introduce a cut-off which separates $ \bbR^2 $ into one part where $ \left| \nabla \rho_0^{in} \right| $
is small and another where $ \left| \nabla \rho_0^{in} \right| $ is large. To do that, take a smooth non increasing function
$ b: [0,+\infty[\,\longrightarrow [0,1] $ such that $ b\equiv 1 $ in $ [0,1] $ and $ b\equiv0 $ in $ [2,+\infty[\,$.
For any $M>0$ and any $s\in\R_+$, we denote $ b_M(s)\,:=\, b(Ms) $. 

Then, as $ b_{M}\left(\left|\nabla_h \rho_0^{in}\right|\right) = 0 $ for $ \left|\nabla_h \rho_0^{in}\right| \geq 2/M $, for any $T>0$ fixed,
directly from the definition of $\Theta_\veps$ we have that
\begin{align}
\Big\| b_{M}\left(\left|\nabla_h \rho_0^{in}\right|\right)\,\Theta_\veps\Big\|_{L^1_T(L^1(\bbR^2))} &\lesssim\,
\Big\| b_{M}\left(\left|\nabla_h \rho_0^{in}\right|\right)  \nabla_h \rho_0^{in}\Big\|_{L^{\infty}([0,T]\times\bbR^2)}\,
\left\| \overline{u_{\eps,h}}\right\|^2_{L^2_T(L^2(\bbR^2))}\,\lesssim\,\frac{1}{M}\,.  \label{est:M;nonso}
\end{align}  

When $\left| \nabla_h \rho_0^{in}\right| > 0 $, instead, the set $ \left\{ \nabla_h \rho_0^{in}, \nabla_h^{\perp} \rho_0^{in} \right\} $ forms an orthogonal basis
of $ \bbR^2 $. In particular, any vector $ v \in \bbR^2 $ can be rewritten as
\begin{equation} \label{eq:vect-dec}
v\,=\, \left(v\cdot \nabla_h \rho_0^{in}\right)\,\frac{\nabla_h \rho_0^{in}}{\left|\nabla_h \rho_0^{in}\right|^2}\, +\,
\left(v \cdot \nabla_h^{\perp} \rho_0^{in}\right)\, \frac{ \nabla_h^{\perp} \rho_0^{in}}{\left|\nabla_h \rho_0^{in}\right|^2}\,.
\end{equation}
Using this idea for $\oline{u_{\veps,h}}$ and $\oline{u_{\veps,h}^\perp}$,
simple computations and special cancellations (we refer to Step 3 in \cite{F:G} and Subsection 4.4 in \cite{C-F} for details) show that
\begin{align*}
\Big(1-b_{M}\left(\left|\nabla_h \rho_0^{in}\right|\right)\Big)\,\Theta_\veps\,=\,
\frac{1-b_{M}\left(\left|\nabla_h \rho_0^{in}\right|\right)}{|\nabla_h \rho_0^{in}|^2}\,\left(\left(\nabla_h \rho_0^{in}\cdot\overline{u_{\veps,h}}\right)^2
\,+\,\left(\nabla_h^{\perp} \rho_0^{in} \cdot \overline{u_{\veps,h}}\right)^2   \right)\, \nabla_{h} \rho_0^{in}\,.
\end{align*}  
Using this equality, we can bound
\begin{align*}
&\Bigg\|\Big(1-b_{M}\left(\left|\nabla_h \rho_0^{in}\right|\right)\Big)\,\Theta_\veps \Bigg\|_{L^1_T(L^1(\bbR^2)) } \\
&\qquad\qquad\qquad\qquad\quad
\leq\,C\,
\left\|\frac{1-b_{M}\left(\left|\nabla_h \rho_0^{in}\right|\right)}{\left|\nabla_h \rho_0^{in}\right|^2}\,
\left|\nabla_h \rho_0^{in}\right|^3\right\|_{L^{\infty}([0,T]	\times \bbR^2) }\,\left\|\overline{u_{\eps,h}}\right\|_{L^2_T(L^2(\bbR^2))}^2\;
\leq\;C\,,
\end{align*}
where the multiplicative constant $ C>0 $ is independent of $\veps\in\,]0,1]$ and $ M>0 $.
By a diagonal argument, there exists a subsequence in $ \eps $ (not relabelled here)  and a sequence $ M = M_{\eps} $, diverging to $ + \infty $, such that  
\begin{equation*}
\frac{1-b_{M_{\eps}}\left(\left|\nabla_h \rho_0^{in}\right|\right)}{\left|\nabla_h \rho_0^{in}\right|^2}\,
\left(\left(\nabla_h \rho_0^{in} \cdot \overline{u_{\eps,h}}\right)^2\, +\,\left(\nabla_h^{\perp} \rho_0^{in} \cdot \overline{u_{\eps,h}}\right)^2 \right)
\,\cvwstar\, \Gamma_2
\end{equation*}
in the weak-$*$ topology of the space $ \mathcal{M}\big([0,T]\times \bbR^2\big) $ of finite Randon mesures on $ [0,T]\times \bbR^2  $. 

Therefore, using also the equality $\nabla_h\rho_0^{in}\cdot\nabla_h^\perp\vphi\,=\,\divh\left(\rho_0^{in}\,\nabla_h^\perp\vphi\right)$,
we deduce that
\begin{align*}
&\int_0^T \int_{\bbR^2}\Big(1-b_{M}\left(\left|\nabla_h \rho_0^{in}\right|\right)\Big)\,\Theta_\veps\cdot\nabla_h^\perp\vphi\,\dd x_h \dt \\
&=\,\int_0^T \int_{\bbR^2}\frac{1-b_{M_{\eps}}\left(\left|\nabla_h \rho_0^{in}\right|\right)}{\left|\nabla_h \rho_0^{in}\right|^2}\,
\left(\left(\nabla_h \rho_0^{in} \cdot \overline{u_{\eps,h}}\right)^2\, +\,\left(\nabla_h^{\perp} \rho_0^{in} \cdot \overline{u_{\eps,h}}\right)^2 \right)
\divh\left( \rho_0^{in}\,\nabla_h^\perp\vphi\right)\,\dd x_h \dt \\
&\qquad \longrightarrow\, \langle \rho_0 \nabla_h \Gamma_2, \psi \rangle\,,
\end{align*}
where the time $T>0$ is such that $\Supp\vphi\subset[0,T]\times \R^2$.
The claimed limit \eqref{conv:Gamma2} then follows from \eqref{est:M;nonso} and the above convergence.
\end{proof} 

We are thus left with the most difficult term, namely
\begin{equation}
\label{diff:term}
\iint_{\R_+\bbR^2} \curlh\left( \rho_0^{in}\, \overline{u_{\eps,h}}\right)\, \overline{u_{\eps,h}^{\perp}} \cdot\nabla_h^\perp\vphi\,\dd x_h\dt\,.
\end{equation}
To deal with this term, we first need to apply a regularisation procedure: this is the scope of the next paragraph.

\subsubsection{High frequencies cut-off} \label{sss:cut-off} 

In this section we introduce a high frequency cut-off, that allows us to regularise functions in the space variables. 
For this, fix $ \chi \in C^{\infty}_c(B_2) $, where we recall that $B_R$ denotes the ball of center $0$ and radius $R>0$ in $\R^2$,
with $ \chi \equiv 1 $ in an open neighborhood of $ \overline{B_{1}} $, $\chi$ radially symmetric and such that,
for any $ e \in  \mathbb{S}^1 $ (with $\mbb S^1$ being the unit sphere of $\R^2$), the map $r\mapsto \chi(r e ) $ is a non increasing function for
$ r \in \bbR_+ $.  
  
For any tempered distribution $ f \in \mathcal{S}'(\bbR^2) $ and for any $M\in\N$, we define
\begin{equation*}
S_M[f]\, :=\, \mathcal{F}^{-1}\Big(\chi\left(2^{-M}\,\xi\right)\;\mathcal{F}(f)(\xi)\Big)\,,
\end{equation*}
where $ \mathcal{F} $ and   $ \mathcal{F}^{-1} $ are the Fourier transform in $ \bbR^2$ and its inverse operator, respectively.
Many properties of the above cut-off operator $S_M$ can be found in Chapter 2 of \cite{B-C-D} (see also the Appendices of \cite{F:G} and \cite{C-F}).
 
Correspondingly to $S_M$, we introduce also the following notation: for any function $f_\veps$ over $\R_+\times\Omega_\veps$, we denote
\begin{gather*}
\oline{f_\veps}_M\,:=\,S_M\Big[\,\oline{f_\veps}\,\Big]\,.
\end{gather*}   
From now on, functions indexed with $ M $ are smooth in the space variable. Moreover, the following properties hold true.
\begin{prop} \label{p:wave}
For any fixed time $ T > 0 $, we have:
\begin{align*}
\forall\,s>2\,,\qquad 
&\sup_{\eps \in\,]0,1]}\left\|\overline{\sigma_{\eps}}_M - \overline{\sigma_{\eps}}\right\|_{L^{\infty}_T(H^{-s}(\bbR^2))}\, \longrightarrow\, 0 \qquad \text{ as }\ 
M \ra +\infty \\
\forall\,s>1\,,\qquad 
&\sup_{\eps \in\,]0,1]}\left\|\overline{\eta_{\eps}}_{M} - \overline{\eta_{\eps}}\right\|_{L^{\infty}_T(H^{-s}(\bbR^2))}\, \longrightarrow\, 0 \qquad \text{ as }\
M \ra +\infty\,.
\end{align*}
Moreover, the couple $ \left(\overline{\sigma_{\eps}}_M,  \overline{\eta_{\eps}}_{M}\right) $ satisfies the wave system
\begin{align*}
&\eps\,\partial_t \overline{\sigma_{\eps}}_M\, +\, \divh\left(\overline{V_{\eps}}_M\right)\, = \, 0 \\
&\eps\, \partial_t \overline{\eta_{\eps}}_M\, +\, \divh\left(\overline{V_{\eps}}_M\right)\, = \, \eps\, \overline{f_{\eps}}_M\,,
\end{align*} 
where we have defined $ V_\veps\,:=\,\rho_\veps\,u_{\veps,h} $ (so that $\overline{V_{\eps}}_M\,=\,S_M\left[\oline{\rho_\veps\,u_{\veps,h}}\right]$)
and where the family $\big(\oline{f_\veps}_M\big)_{\veps,M}$ satisfies the following bounds: for any $s\geq0$, any $T>0$ and any $M\in\N$,
there exists a positive constant $C=C(s,T,M)$, only depending on the quantities in the brackets, such that
\begin{equation*}
\sup_{\eps \in \,]0,1]}\left\| \oline{f_{\eps}}_M \right\|_{L^2_T(H^s(\bbR))} \leq C(s, T,M)\,.
\end{equation*}

\end{prop}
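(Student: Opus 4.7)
My plan is to treat the three claims in sequence, with the wave system being the structural statement and the uniform convergence statements being classical corollaries of Fourier multiplier estimates.

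For the convergence of $\oline{\s_\veps}_M$ and $\oline{\eta_\veps}_M$, I would invoke a Bernstein-type estimate on the Fourier side. Writing
\[
\|f - S_M f\|_{H^{-s}}^{2}\,=\,\int_{\R^{2}}\big(1-\chi(2^{-M}\xi)\big)^{2}\,(1+|\xi|^{2})^{-s}\,|\widehat f(\xi)|^{2}\,\dd\xi\,,
\]
we see that on the support of $1 - \chi(2^{-M}\cdot)$ one has $|\xi|\gtrsim 2^M$, so that for any $k<s$ it is possible to extract a factor $2^{-2M(s-k)}$ and obtain
\[
\|f-S_Mf\|_{H^{-s}}\,\lesssim\,2^{-M(s-k)}\,\|f\|_{H^{-k}}\,.
\]
Applying this estimate with $k=2$ to the family $\big(\oline{\s_\veps}\big)_\veps$, which is uniformly bounded in $L^{\infty}_T(H^{-2})$ by Proposition \ref{conv:eta:sigma}, gives the first claim for any $s>2$; similarly with $k=1$ for $\big(\oline{\eta_\veps}\big)_\veps$, uniformly bounded in $L^\infty(\R_+;H^{-1})$, yields the second for any $s>1$.

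For the wave system, I would start from the mass equation \eqref{equ:sigm:eps} for $\oline{\s_\veps}$ and simply apply $S_M$: since $S_M$ commutes with $\partial_t$ and $\divh$, this produces the first equation of the system. For the second equation, I would use \eqref{equ:to:lim}, which can be recast as
\[
\eps\,\d_t\oline{\eta_\veps}\,=\,\eps\,\d_t\oline{\s_\veps}\,-\,\eps\,\curlh\divh\big(\oline{\rho_\veps u_{\veps,h}\otimes u_{\veps,h}}\big)
\,-\,\eps\,\frac{\alpha_\veps}{\ell_\veps}\big(\o^+_\veps+\o^-_\veps\big)\,+\,\eps\,\Delta_h\oline{\o_\veps}\,.
\]
Substituting \eqref{equ:sigm:eps} for $\eps\,\d_t\oline{\s_\veps}$ and applying $S_M$ produces the second equation of the wave system, with
\[
\oline{f_\veps}_M\,:=\,-\,S_M\Big[\curlh\divh\big(\oline{\rho_\veps u_{\veps,h}\otimes u_{\veps,h}}\big)\,+\,\frac{\alpha_\veps}{\ell_\veps}\big(\o^+_\veps+\o^-_\veps\big)\,-\,\Delta_h\oline{\o_\veps}\Big]\,.
\]

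The core task is then to derive the uniform bound on $\oline{f_\veps}_M$ in $L^2_T(H^s)$. This reduces to three separate estimates, each obtained by combining a uniform bound in a negative Sobolev space (or in $L^1$) with a Bernstein-type estimate of the form $\|S_Mg\|_{H^s}\lesssim 2^{M(s+k)}\|g\|_{H^{-k}}$, respectively $\|S_Mg\|_{H^s}\lesssim 2^{M(s+1)}\|g\|_{L^1(\R^2)}$ in dimension two. For the dissipative contribution, Lemma \ref{Lemma:merc} yields $\oline{\o_\veps}\in L^2_T(L^2)$ uniformly, giving the desired control after application of $S_M\Delta_h$. For the boundary term, the energy inequality provides $\frac{\alpha_\veps}{\ell_\veps}\|u_\veps(\cdot,\cdot,\pm\ell_\veps)\|^2_{L^2_T(L^2)}\leq C$, which combined with the assumption $\alpha_\veps/\ell_\veps\to\lam<+\infty$ of Theorem \ref{Main:Theo} produces a uniform $L^2_T(H^{-1})$ bound on $\frac{\alpha_\veps}{\ell_\veps}(\o_\veps^++\o_\veps^-)$. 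The main obstacle, and the delicate point of the estimate, lies in the convective term, which is only uniformly bounded in $L^\infty_T(L^1)\cap L^1_T(L^2)$ as revealed by the computations in the proof of Proposition \ref{conv:eta:sigma}. Here I would rely on the $L^\infty_T(L^1)$ bound, use the embedding $L^\infty_T\hookrightarrow L^2_T$ on the finite interval $[0,T]$, and invoke the Bernstein inequality to absorb simultaneously the two derivatives of $\curlh\divh$ and the passage from $L^1$ to $H^s$, at the price of a factor of $2^{M(s+3)}$ depending only on $M$ and $s$. Summing the three contributions yields the claimed bound $\|\oline{f_\veps}_M\|_{L^2_T(H^s)}\leq C(s,T,M)$.
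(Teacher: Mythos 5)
Your proposal is correct and follows exactly the route the paper takes: the convergence statements come from Bernstein-type bounds on $\Id - S_M$ combined with the uniform $H^{-2}$ and $H^{-1}$ bounds of Proposition \ref{conv:eta:sigma}; the wave system is obtained by applying $S_M$ to \eqref{equ:sigm:eps} and \eqref{equ:to:lim}, with the same expression for $\oline{f_\veps}$; and the $L^2_T(H^s)$ bound on $\oline{f_\veps}_M$ follows from Lemma \ref{Lemma:merc} and the mapping properties of $S_M$. The paper leaves the estimate on $\oline{f_\veps}_M$ entirely implicit, so your term-by-term breakdown (dissipation, boundary, convection) simply makes explicit what the paper asserts as a direct consequence.
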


\begin{proof}
The proof of the first part of the proposition is an immediate consequence of the uniform bounds stated in Proposition \ref{conv:eta:sigma}.

On the other hand, the fact that $\left(\overline{\sigma_{\eps}}_M, \overline{\eta_{\eps}}_M\right) $ satisfies the wave system is a consequence of
relations \eqref{equ:sigm:eps} and \eqref{equ:to:lim}, after noticing that $ S_M $ commutes with derivatives and after defining
$\oline{f_\veps}_M\,:=\,S_M\Big[\oline{f_\veps}\Big]$, with
\begin{equation*}
\overline{f_{\eps}}\,:=\, -\left(\curlh\divh\left(\overline{ \rho_{\eps} u_{\eps,h} \otimes u_{\eps,h}}\right)\,+\,
\frac{\alpha_{\eps}}{\ell_{\eps}}\,\left(\omega_\veps^+\,+\,\o_\veps^-\right)\right)\,+\,\Delta_h\oline{\o_\veps}\,.
\end{equation*}

The last estimate stated in the proposition is a consequence of Lemma \ref{Lemma:merc} and the boundedness properties of operator $S_M$ in Lebesgue
and Sobolev spaces.
\end{proof}

Next, we need the following result.
\begin{prop}
	\label{prop:27}
The two-dimensional vector field $ \overline{V_{\eps}}_M\,=\,S_M\left[\oline{\rho_\veps\,u_{\veps,h}}\right] $ can be rewritten as
\begin{equation}
\label{dec:V:eps:M}
\overline{V_{\eps}}_M\, =\, \rho_{0}^{in}\,\overline{u_{\eps,h}}_M\, +\, H_{\eps,M} \, +\, \eps^{\theta}\,\z_{\eps,M}\, +\, \ell_{\eps}\, G_{\eps,M}\,,
\end{equation} 
for some suitable $\theta\in\,]0,1[\,$ and smooth functions $H_{\veps,M}$, $\z_{\veps,M}$ and $G_{\veps,M}$. In addition,
for any $T>0$, any $k\in\N$ and any $M\in\N$, one has
\begin{align*}
\sup_{\eps \in\,]0,1]}\Big( \left\|G_{\eps,M}\right\|_{L^2_T(H^{k}(\bbR^2))}\, +\,\left\|\z_{\eps,M}\right\|_{L^2_T(W^{k,\infty}(\R^2))}\Big)\,&\leq\,C(k,T,M) \\
\sup_{\eps \in\,]0,1] }\left\|H_{\eps,M}\right\|_{L^2_T(H^{1}(\bbR^2))}\,&\leq\,C\left(T,\left\|\rho_0^{in}\right\|_{W^{2,\infty}}\right)\,2^{-M}\,,
\end{align*}
for suitable positive constants depending only on the quantities appearing in the brackets.
\end{prop}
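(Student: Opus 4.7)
The starting point is the identity
\[
\overline{V_{\veps}}\,=\,\overline{\rho_\veps\,u_{\veps,h}}\,=\,\rho_0^{in}\,\overline{u_{\veps,h}}\,+\,\veps\,\overline{\sigma_\veps\,u_{\veps,h}}\,,
\]
which follows from $\rho_\veps\,=\,\rho_0^{in}+\veps\,\sigma_\veps$ and the fact that $\rho_0^{in}$ does not depend on $x_3$. Applying $S_M$ and commuting it with $\rho_0^{in}$ in the first term yields
\[
\overline{V_{\veps}}_M\,=\,\rho_0^{in}\,\overline{u_{\veps,h}}_M\,+\,[S_M,\rho_0^{in}]\overline{u_{\veps,h}}\,+\,\veps\,S_M\bigl[\overline{\sigma_\veps u_{\veps,h}}\bigr]\,.
\]
I would set $H_{\veps,M}:=[S_M,\rho_0^{in}]\overline{u_{\veps,h}}$: the standard Fourier-multiplier commutator estimate (cf.\ Lemma~2.97 of \cite{B-C-D}) gives $\|[S_M,\rho_0^{in}]g\|_{H^1}\lesssim 2^{-M}\|\rho_0^{in}\|_{W^{2,\infty}}\|g\|_{H^1}$, which combined with hypothesis \eqref{hyp:rho_0} and the uniform $L^2_T H^1$ bound on $\overline{u_{\veps,h}}$ from Lemma \ref{Lemma:merc} delivers the required control on $H_{\veps,M}$.

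For the remaining contribution, I would further split
\[
\veps\,\overline{\sigma_\veps\,u_{\veps,h}}\,=\,\overline{(\rho_\veps-\rho_0^{in})\bigl(u_{\veps,h}-\overline{u_{\veps,h}}\bigr)}\,+\,\bigl(\overline{\rho_\veps}-\rho_0^{in}\bigr)\,\overline{u_{\veps,h}}\,,
\]
using that $\overline{u_{\veps,h}}$ is independent of $x_3$. The first summand is benign: the pointwise bound $|\rho_\veps-\rho_0^{in}|\leq 3\rho^*$, the thin-domain Poincar\'e--Wirtinger inequality of Lemma \ref{l:Poincare}, and the energy estimates of Lemma \ref{Lemma:merc} combine to produce an $L^2_T L^2$ bound of order $\ell_\veps$; since $S_M$ projects onto frequencies $|\xi|\leq 2^{M+1}$, Bernstein's inequality then upgrades this to an $L^2_T H^k$ bound of order $\ell_\veps$ with constant depending only on $k$ and $M$, and this contribution is absorbed into $\ell_\veps\,G_{\veps,M}$.

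The delicate term, and what I expect to be the main obstacle, is $S_M\bigl[(\overline{\rho_\veps}-\rho_0^{in})\overline{u_{\veps,h}}\bigr]$, because $f_\veps:=\overline{\rho_\veps}-\rho_0^{in}\,=\,\veps\,\overline{\sigma_\veps}$ is only $O(1)$ in $L^\infty_T L^\infty$, whereas Proposition \ref{conv:eta:sigma} provides the $O(\veps)$ smallness only in $L^\infty_T H^{-2}$. I would exploit this dual smallness by a double Littlewood--Paley decomposition at parameters $N$ and $P$ to be tuned: write $f_\veps=S_N f_\veps+(I-S_N)f_\veps$ and $\overline{u_{\veps,h}}=S_P\overline{u_{\veps,h}}+(I-S_P)\overline{u_{\veps,h}}$ and expand the product into four pieces. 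Two-dimensional Bernstein yields $\|S_N f_\veps\|_{L^\infty}\lesssim 2^{3N}\|f_\veps\|_{H^{-2}}\lesssim 2^{3N}\veps$, while $\|(I-S_P)\overline{u_{\veps,h}}\|_{L^2_T L^2}\lesssim 2^{-P}\|\overline{u_{\veps,h}}\|_{L^2_T H^1}\lesssim 2^{-P}$. The crucial cancellation concerns the \emph{high-low} piece $(I-S_N)f_\veps\cdot S_P\overline{u_{\veps,h}}$: its Fourier support lies in $\{|\xi|\geq 2^N-2^P\}$, so choosing $N\geq M+P+3$ makes $S_M$ annihilate it. The surviving three pieces are then bounded in $L^2_T L^2$ by $C(M)\bigl(2^{3P}\veps+2^{-P}\bigr)$, and balancing via $2^P\sim\veps^{-1/4}$ produces a bound of order $C(M)\,\veps^{1/4}$. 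Applying Bernstein once more to convert the $L^2_T L^2$ estimate into an $L^2_T W^{k,\infty}$ one (legitimate because the whole expression is localized by the outermost $S_M$) defines $\zeta_{\veps,M}$ and the exponent $\theta:=1/4$ with the stated uniform bound, thus completing the decomposition.
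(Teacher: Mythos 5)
Your proof is correct and the decomposition you obtain is, after algebraic simplification, \emph{identical} to the paper's: the paper first splits $\overline{\rho_\veps u_{\veps,h}}$ via $u_{\veps,h}=\overline{u_{\veps,h}}+(u_{\veps,h}-\overline{u_{\veps,h}})$ and then splits $\overline{\rho_\veps}=\rho_0^{in}+(\overline{\rho_\veps}-\rho_0^{in})$, while you split in the opposite order, but since $\rho_0^{in}\,\overline{u_{\veps,h}-\overline{u_{\veps,h}}}=0$ the three resulting pieces $H_{\veps,M}$, $\zeta_{\veps,M}$, $G_{\veps,M}$ coincide with those of the paper, and your treatments of $H_{\veps,M}$ (commutator estimate) and $G_{\veps,M}$ (thin-domain Poincar\'e plus the uniform $L^\infty$ control on the densities) are the same as well. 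The one point where your argument genuinely diverges is the estimate for $\zeta_{\veps,M}$: the paper obtains it by abstract Besov-space interpolation, writing $\big\|\veps^{-\theta}(\overline{\rho_\veps}-\rho_0^{in})\big\|_{B^{-3\theta}_{\infty,\infty}}\lesssim\|\overline{\rho_\veps}-\rho_0^{in}\|_{L^\infty}^{1-\theta}\,\|\overline{\sigma_\veps}\|_{H^{-2}}^{\theta}$ and closing with paraproduct/product laws in Besov spaces, whereas you re-derive the same ``dual-smallness'' interpolation from first principles via a two-parameter frequency cut-off ($N$ for the density, $P$ for the velocity), a support argument killing the high--low piece under $S_M$, Bernstein estimates, and an optimization in $P$. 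Both exploit precisely the same mechanism — $\overline{\rho_\veps}-\rho_0^{in}$ is $O(1)$ in $L^\infty$ but $O(\veps)$ in $H^{-2}$ — and both are correct; the paper's interpolation is slicker and yields any $\theta<1/3$, your explicit tuning is more elementary and produces the concrete value $\theta=1/4$, which is equally admissible for the statement of the proposition.
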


\begin{proof}
Starting from the definition of $V_\veps$, for any $\theta\in\,]0,1[\,$ we can write
\begin{align*}
\overline{V_{\eps}}_M\, &=\, S_M\left[\overline{\rho_\eps\, u_{\eps,h}}\right]\, =\, S_M\left[\overline{\rho_\eps} \, \overline{u_{\eps,h}}\right]\, +\, S_M\left[\overline{\rho_\eps\,\big(u_{\eps,h} - \overline{u_{\eps,h}}}\big)\right] \\
&=\, S_M\left[\rho_0^{in}\,\overline{u_{\eps,h}}\right]\,+\, 
\eps^{\theta}\,S_M\left[\frac{\overline{\rho_{\eps}}-\rho_{0}^{in}}{\eps^{\theta}}\,\overline{u_{\eps,h}}\right]\, +\,
S_M\left[\overline{\rho_\eps\,\big(u_{\eps,h} - \overline{u_{\eps,h}}}\big)\right]\,, 
\end{align*}
whence the decomposition \eqref{dec:V:eps:M} follows after defining
\begin{align*}
&H_{\eps,M}\,:=\, S_M\left[\rho_0^{in}\, \overline{u_{\eps,h}}\right]-\rho_0^{in}\,S_M\left[\overline{u_{\eps,h}}\right]\,, \\
&\qquad\qquad \z_{\eps,M}\,:=\,S_M\left[\frac{\overline{\rho_{\eps}}-\rho_{0}^{in}}{\eps^{\theta}}\,\overline{u_{\eps,h}}\right]
\qquad\quad \mbox{ and } \qquad\quad
G_{\eps,M}\,:=\,\ell^{-1}_{\eps}\,S_M\left[\overline{\rho_\eps\,\big(u_{\eps,h} - \overline{u_{\eps,h}}}\big)\right]\,.
\end{align*}

We now need to estimate the previous terms in suitable norms. First of all,
from standard commutator estimates (see \tsl{e.g.} Lemma A.7 of \cite{F:G}) and Lemma \ref{Lemma:merc}, we have that
\begin{equation*}
\sup_{\eps\in\,]0,1]}\left\|H_{\eps,M}\right\|_{L^2_T(L^2(\bbR^2))}\,\lesssim\, 2^{-M}\,\left\|\nabla \rho_0^{in}\right\|_{L^{\infty}(\bbR^2)}\,
\left\|\overline{u_{\eps,h}}\right\|_{L^2_T(L^2(\bbR^2))}\,.
\end{equation*}
%
%

The estimate for the gradient $\nabla_h H_{\veps,M}$ is analogous, so let us switch directly to the control of $\z_{\eps,M}$.
To begin with, using\footnote{Here, the notation $B^s_{p,r}=B^s_{p,r}(\R^2)$ 
stands for the non-homogeneous Besov space of regularity index $s\in\R$ integrability index $p\in[1,+\infty]$ and summation index $r\in[1,+\infty]$ over $\R^2$.
We refer to Chapter 2 of \cite{B-C-D} for basic definitions and properties of these spaces. Here we only recall that, for any $s\in\R$,
$H^s\equiv B^s_{2,2}$ and that, for any $k\in\N$, one has $B^k_{\infty,1}\hookrightarrow W^{k,\infty}\hookrightarrow B^k_{\infty,\infty}$.}
the embeddings $L^\infty(\R^2)\hookrightarrow B^{0}_{\infty,\infty}(\R^2)$ and (in dimension $d=2$, for any $s\in\R$)
$H^s(\R^2)\hookrightarrow B^{s-1}_{\infty,\infty}(\R^2)$,
an interpolation argument allows us to write
\begin{align*}
\left\|\frac{\overline{\rho_{\eps}}-\rho_{0}^{in}}{\eps^{\theta}} \right\|_{L^{\infty }_T(B^{-3\theta}_{\infty,\infty}(\bbR^2))}\, &\leq\,
\left\|\left(\overline{\rho_{\eps}}-\rho_{0}^{in} \right)\right\|_{L^{\infty}_T(B^0_{\infty,\infty}(\bbR^2))}^{1-\theta}\,
\left\|\frac{\overline{\rho_{\eps}}-\rho_{0}^{in}}{\eps} \right\|_{L^{\infty}_T(B^{-3}_{\infty,\infty}(\bbR^2))}^{\theta} \\ 
&\lesssim\,\left\|\overline{\rho_{\eps}}-\rho_{0}^{in}\right\|_{L^{\infty}_T(L^{\infty}(\R^2))}^{1-\theta}\,
\left\|\overline{\sigma_{\eps}}\right\|_{L^{\infty}_T(H^{-2}(\bbR^2))}^{\theta}\,. 
\end{align*}
Notice that the right-hand side of the previous inequality
is uniformly bounded, owing to the bounds of Lemma \ref{Lemma:merc}. On the other hand, for $\g\in\,]0,1[\,$ one has
\begin{equation*}
\left\|\overline{u_{\eps,h}}\right\|_{L^{2}_T(H^{\gamma}(\bbR^2))}\,\leq\,\left\|\overline{u_{\eps,h}}\right\|_{L^{2}_T(L^2(\bbR^2))}^{1-\gamma}
\,\left\|\overline{u_{\eps,h}}\right\|_{L^{2}_T(H^{1}(\bbR^2))}^{\gamma}\,,
\end{equation*} 
which is again uniformly bounded by a constant $C=C(T)>0$, in view of Lemma \ref{Lemma:merc}.
Then, using the previous interpolation inequalities and product laws in Besov spaces (see \tsl{e.g.} Theorems 2.82 and 2.85 of \cite{B-C-D}) we get,
for $ \gamma -3 \theta > 0 $, the bound
\begin{align*}
\left\|\frac{\overline{\rho_{\eps}}-\rho_{0}^{in}}{\eps^{\theta}}\, \overline{u_{\eps,h}} \right\|_{L^2_T(B^{-3\theta-1+\gamma}_{\infty,\infty}(\bbR^2))}\,
&\lesssim\, \left\|\frac{\overline{\rho_{\eps}}-\rho_{0}^{in}}{\eps^{\theta}} \right\|_{L^{\infty}_T(B^{-3\theta}_{\infty,\infty}(\bbR^2))}\,
\left\|\overline{u_{\eps,h}}\right\|_{L^{2}_T(H^{\gamma}(\bbR^2))}\,\leq\,C\,,
\end{align*} 
which in turn yields\footnote{Here, we also use that, for any $s\in\R$, any $p,r\,\in[1,+\infty]$ and any $\de>0$, one has the embedding
$B^s_{p,r}\hookrightarrow B^{s-\de}_{p,1}$.} the claimed estimate for the $\z_{\veps,M}$ term.
%
%

We conclude with the estimates on $ G_{\eps,M} $. Notice that 
\begin{align*}
\left\|\overline{\rho_\eps\,\big(u_{\eps,h} - \overline{u_{\eps,h}}}\big)\right\|_{L^2_T(L^2(\bbR^2))}^2\,&=\,
\int_{0}^T\int_{\bbR^2}\left( \frac{1}{2 \ell_{\eps}} \int_{-\ell_{\eps}}^{\ell_{\eps}} \rho_{\eps}\,\big(u_{\eps,h}-\overline{u_{\eps,h}}\big)\,\dx_3\right)^2\,
\dx_h \dt \\
&\leq\,\frac{1}{2\ell_{\eps}}\int^T_0\int_{\bbR^2}\int_{-\ell_{\eps}}^{\ell_{\eps}}
\left|\rho_{\eps}\right|^2\,\left|u_{\eps,h}-\overline{u_{\eps,h}}\right|^2  \, \dx \dt \\
&\leq\, C \ell_{\eps}\,\left\|\rho_{\eps}\right\|_{L^{\infty}([0,T]\times \Omega_{\eps})}\,
\overline{\left\|\nabla u_{\eps}(\cdot,\cdot,x_3)\right\|_{L^{2}_T(L^2(\R^2)) }^2}\,,
\end{align*}
where, in the last step, we have used Lemma \ref{l:Poincare}. From the above inequality and the \tsl{a priori} bounds of Lemma \ref{Lemma:merc},
we deduce the claimed estimates for $G_{\veps,M}$.

This completes the proof of the proposition.
\end{proof}

\subsubsection{Convergence of the term \eqref{diff:term}} \label{sss:difficult}

After this preparation, we can now tackle the convergence of the integral \eqref{diff:term}.
\begin{prop}
\label{p:Gamma3}
There exists a distribution $ \Gamma_3  \in \mathcal{D}'\big(\R_+\times \bbR^2\big) $ such that, for any $\vphi\in C^\infty_c\big(\R_+\times\R^2\big)$,
in the limit $\veps\ra0^+$ one has the convergence
\begin{equation*}
\iint_{\R_+\times\bbR^2}\curlh\big( \rho_0^{in}\,\overline{u_{\eps,h}}\big)\,\overline{u_{\eps,h}^{\perp}}\cdot\nabla_h^{\perp}\vphi\,\dx_h\dt\,\longrightarrow\,
\langle \rho_0^{in} \nabla_h \Gamma_3, \nabla_h^{\perp}\vphi \rangle\,.
\end{equation*}
\end{prop}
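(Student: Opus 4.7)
My overall strategy follows the analogous argument in the purely two-dimensional case \cite{F:G}: I would first regularise $\oline{u_{\veps,h}}$ in space at scale $2^M$ via the operator $S_M$, then decompose $\oline{u_{\veps,h}^\perp}_M$ in the frame $\{\nabla_h\rho_0^{in},\nabla_h^\perp\rho_0^{in}\}$ of $\R^2$ on the set where $|\nabla_h\rho_0^{in}|$ is bounded below by $1/N$, and finally exploit the almost-divergence-free constraint coming from the averaged mass equation \eqref{equ:sigm:eps} to kill the ``bad'' component of that decomposition. Assumption \eqref{cond:12} is what ensures the complementary region $\{|\nabla_h\rho_0^{in}|\leq 2/N\}$ has vanishing Lebesgue measure on every compact set as $N\to+\infty$.

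\medbreak
\noindent\textbf{Step 1 (Regularisation).} Using that $\left(\oline{u_{\veps,h}}\right)_\veps\Subset L^2_T(H^1(\R^2))$, a standard Bernstein-type estimate yields $\|\oline{u_{\veps,h}}-\oline{u_{\veps,h}}_M\|_{L^2_T(L^2(\R^2))}\lesssim 2^{-M}$ uniformly in $\veps$, and the same rate carries over to $\curlh(\rho_0^{in}\,(\oline{u_{\veps,h}}-\oline{u_{\veps,h}}_M))$ in $L^2_T(H^{-1})$ after using commutator estimates. Pairing the difference of the two integrands against the smooth test factor $\nabla_h^\perp\vphi$ and using the uniform $L^2_T(H^1)$-bound on $\oline{u_{\veps,h}^\perp}$, I can replace $\oline{u_{\veps,h}}$ by $\oline{u_{\veps,h}}_M$ (and similarly $\oline{u_{\veps,h}^\perp}$ by $\oline{u_{\veps,h}^\perp}_M$) in \eqref{diff:term} modulo an error which is $o(1)$ as $M\to+\infty$, uniformly in $\veps\in\,]0,1]$.

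\medbreak
\noindent\textbf{Step 2 (Splitting and cancellation).} For $N\in\N$, insert the partition of unity $1=b_N(|\nabla_h\rho_0^{in}|)+(1-b_N(|\nabla_h\rho_0^{in}|))$ introduced in the proof of Lemma \ref{l:first-approx}. On the support of $b_N$, H\"older's inequality combined with the Sobolev embedding $H^1(\R^2)\hookrightarrow L^p$ for any finite $p$ controls the contribution by $C\,\|\mds{1}_{\{|\nabla_h\rho_0^{in}|\leq 2/N\}}\,\nabla_h^\perp\vphi\|_{L^2(\R^2)}$, which, since $\vphi$ has compact support, vanishes as $N\to+\infty$ by assumption \eqref{cond:12}. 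On the complementary set I use the orthogonal decomposition \eqref{eq:vect-dec} applied to $\oline{u_{\veps,h}^\perp}_M$. For its $\nabla_h^\perp\rho_0^{in}$-component, the key identity is $\oline{u_{\veps,h}^\perp}_M\cdot\nabla_h^\perp\rho_0^{in}=\oline{u_{\veps,h}}_M\cdot\nabla_h\rho_0^{in}=\divh\big(\rho_0^{in}\,\oline{u_{\veps,h}}_M\big)$, where I used $\divh\oline{u_{\veps,h}}_M=0$; combining the decomposition of Proposition \ref{prop:27} with the wave system of Proposition \ref{p:wave} then gives
\[
\divh\big(\rho_0^{in}\,\oline{u_{\veps,h}}_M\big)\,=\,-\,\veps\,\d_t\oline{\sigma_\veps}_M\,+\,\mathcal O_M\big(\ell_\veps+\veps^\theta+2^{-M}\big)
\]
in a suitable negative Sobolev norm, so a further duality pairing (using the uniform $L^2_T(L^2)$-bound on $\curlh(\rho_0^{in}\,\oline{u_{\veps,h}}_M)$ together with the smoothness of $b_N$ and $\rho_0^{in}$) shows that this component vanishes as $\veps\to 0^+$ for every fixed $M$ and $N$.

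\medbreak
\noindent\textbf{Step 3 (Identification of the limit).} The remaining $\nabla_h\rho_0^{in}$-component reads
\[
\iint_{[0,T]\times\R^2}\!\!\big(1-b_N\big)\,\frac{\curlh(\rho_0^{in}\oline{u_{\veps,h}}_M)\,\big(\oline{u_{\veps,h}^\perp}_M\cdot\nabla_h\rho_0^{in}\big)}{|\nabla_h\rho_0^{in}|^2}\,\nabla_h\rho_0^{in}\cdot\nabla_h^\perp\vphi\,\dx_h\dt\,,
\]
and I rewrite $\nabla_h\rho_0^{in}\cdot\nabla_h^\perp\vphi=\divh\big(\rho_0^{in}\nabla_h^\perp\vphi\big)$. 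The scalar family
$\big(1-b_N\big)\,\curlh(\rho_0^{in}\oline{u_{\veps,h}}_M)\,\big(\oline{u_{\veps,h}^\perp}_M\cdot\nabla_h\rho_0^{in}\big)/|\nabla_h\rho_0^{in}|^2$
is uniformly bounded in $L^1_T(L^1(\R^2))$ by Cauchy--Schwarz and Lemma \ref{Lemma:merc}. Up to a subsequence it admits a weak-$*$ limit $-\Gamma_3$ in the space $\mc M([0,T]\times\R^2)$ of Radon measures; an integration by parts finally yields the representation $\langle \rho_0^{in}\nabla_h\Gamma_3,\nabla_h^\perp\vphi\rangle$. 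The hardest part of the argument is precisely this simultaneous passage to the limit in the three parameters $\veps$, $M$ and $N$: I would handle it by a diagonal extraction, choosing sequences $M=M_\veps\nearrow+\infty$ and $N=N_\veps\nearrow+\infty$ slowly enough that all error terms $\veps$, $\veps^\theta$, $\ell_\veps$, $2^{-M_\veps}$ and $\mc L(\{|\nabla_h\rho_0^{in}|\leq 2/N_\veps\}\cap\Supp\vphi)$ vanish at once.
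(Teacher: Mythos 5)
Your overall blueprint --- regularise at scale $2^M$, introduce the cut-off $b_N(|\nabla_h\rho_0^{in}|)$, decompose in the orthogonal frame $\{\nabla_h\rho_0^{in},\nabla_h^\perp\rho_0^{in}\}$, kill the $\nabla_h^\perp\rho_0^{in}$-component via the almost-divergence-free constraint and the wave system, and control the small-gradient region with assumption \eqref{cond:12} --- is the same as the paper's. The place where your write-up has a genuine gap is Step 3, precisely the extraction and identification of $\Gamma_3$.

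You assert that the scalar family
\[
\big(1-b_N\big)\,\frac{\curlh(\rho_0^{in}\,\overline{u_{\eps,h}}_M)\,\big(\overline{u_{\eps,h}^\perp}_M\cdot\nabla_h\rho_0^{in}\big)}{\left|\nabla_h\rho_0^{in}\right|^{2}}
\]
is ``uniformly bounded in $L^1_T(L^1(\R^2))$''. This is false uniformly in $N$: the factor $\overline{u_{\eps,h}^\perp}_M\cdot\nabla_h\rho_0^{in}$ cancels \emph{one} power of $|\nabla_h\rho_0^{in}|^{-1}$, so what is left behaves like $N\,|\curlh(\rho_0^{in}\overline{u_{\eps,h}}_M)|\,|\overline{u_{\eps,h}}_M|$ on the support of $1-b_N$, and blows up as $N\to+\infty$. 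Consequently, your diagonal choice $N=N_\veps\nearrow+\infty$ does not guarantee that the scalar family remains bounded in $\mathcal M$, and the weak-$*$ limit ``$-\Gamma_3$'' cannot be extracted as claimed. The paper sidesteps this by keeping only \emph{one} power of $|\nabla_h\rho_0^{in}|^{-1}$ inside the measure (which then \emph{is} uniformly bounded in $\mathcal M$ by exactly the cancellation above), at the cost of pairing against the factor $\tfrac{\nabla_h\rho_0^{in}}{|\nabla_h\rho_0^{in}|}\cdot\nabla_h^\perp\vphi$, which is \emph{not} continuous where $\nabla_h\rho_0^{in}=0$. The heart of the paper's proof, and the step entirely missing from yours, is then the argument that the limit measures $\wtilde\Gamma_M$ and $\wtilde\Gamma$ \emph{do not charge} the set $\{\nabla_h\rho_0^{in}=0\}$ (proved via the very same $L^4$/Lebesgue-measure estimate you used for the $b_N$-piece, plus lower semicontinuity of the total variation under weak-$*$ convergence); without this, the discontinuous test factor does not pass to the limit. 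If you insist on your normalisation $|\nabla_h\rho_0^{in}|^{-2}$, you still need a ``no concentration at $\{\nabla_h\rho_0^{in}=0\}$'' argument to identify a consistent distributional limit of the $N$-dependent measures, so the issue cannot be avoided.

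A secondary, smaller, imprecision: in Step 2 you write that after replacing $\divh(\rho_0^{in}\overline{u_{\eps,h}}_M)$ by $-\veps\,\d_t\overline{\sigma_\veps}_M+\mathcal O_M(\cdot)$, ``a further duality pairing shows that this component vanishes''. But the prefactor $\curlh(\rho_0^{in}\overline{u_{\eps,h}}_M)\approx\overline{\eta_\veps}_M$ is only bounded (not small), and $\veps\d_t\overline{\sigma_\veps}_M$ is a time derivative of something merely $O(1)$; the smallness does \emph{not} follow from pairing alone. One has to use the algebraic identity
\[
\overline{\eta_\veps}_M\,\veps\,\d_t\overline{\sigma_\veps}_M
=\big(\overline{\eta_\veps}_M-\overline{\sigma_\veps}_M\big)\,\veps\,\d_t\overline{\sigma_\veps}_M+\frac{\veps}{2}\,\d_t\left|\overline{\sigma_\veps}_M\right|^2,
\]
integrate by parts in time, and invoke the wave system of Proposition \ref{p:wave} to bound $\d_t(\overline{\eta_\veps}_M-\overline{\sigma_\veps}_M)$. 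You should make this explicit.
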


To prove the above statement, we start by noticing that it is actually enough to pass to the limit for the regularised solutions. 
\begin{lemma} \label{l:diff_reg}
For any $ \vphi \in C^{\infty}_c\big(\R_+ \times \bbR^2\big) $, we have
\begin{align*}
&\lim_{M\to \infty} \limsup_{\eps \to 0^+}
\bigg|\iint_{\R_+\times\bbR^2}\curlh\big( \rho_0^{in}\,\overline{u_{\eps,h}}\big)\,\overline{u_{\eps,h}^{\perp}}\cdot\nabla_h^{\perp}\vphi\,\dd x_h\dt \\
&\qquad\qquad\qquad\qquad\qquad\qquad\qquad \, -\,
\iint_{\R_+\times\bbR^2}\curlh\big( \rho_0^{in}\,\overline{u_{\eps,h}}_{M}\big)\,\overline{u_{\eps,h}}^{\perp}_M\cdot\nabla_h^{\perp}\vphi\,\dd x_h\dt \bigg|\,=\,0\,.
\end{align*}

\end{lemma}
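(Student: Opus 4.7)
The plan is to introduce the telescoping decomposition
\[
\curlh\bigl(\rho_0^{in}\,\overline{u_{\eps,h}}\bigr)\,\overline{u_{\eps,h}^\perp} - \curlh\bigl(\rho_0^{in}\,\overline{u_{\eps,h}}_M\bigr)\,\overline{u_{\eps,h}}_M^\perp
= \curlh\bigl(\rho_0^{in}(\overline{u_{\eps,h}} - \overline{u_{\eps,h}}_M)\bigr)\,\overline{u_{\eps,h}^\perp}
+ \curlh\bigl(\rho_0^{in}\,\overline{u_{\eps,h}}_M\bigr)\,\bigl(\overline{u_{\eps,h}^\perp} - \overline{u_{\eps,h}}_M^\perp\bigr),
\]
to pair it with $\nabla_h^\perp\vphi$, and then to show that each of the two resulting space-time integrals is bounded by $C(\vphi)\,2^{-M}$, uniformly in $\eps\in\,]0,1]$. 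The single workhorse estimate will be the standard low-frequency approximation bound $\|f - S_M f\|_{L^2(\R^2)} \leq C\,2^{-M}\|f\|_{H^1(\R^2)}$, combined with the uniform control $\bigl(\overline{u_{\eps,h}}\bigr)_\eps \Subset L^2_{\mathrm{loc}}(\R_+;H^1(\R^2))$ granted by Lemma \ref{Lemma:merc}.

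For the first piece, the curl carries one derivative whereas $\overline{u_{\eps,h}} - \overline{u_{\eps,h}}_M$ is only small in $L^2$; the natural move is therefore to integrate by parts in space against the smooth factor, using the elementary identity $\iint \curlh(W)\,f\,\dd x_h\dd t = -\iint W\cdot\nabla_h^\perp f\,\dd x_h\dd t$, so that the derivative is shifted onto $f := \overline{u_{\eps,h}^\perp}\cdot\nabla_h^\perp\vphi$. Since $\nabla_h^\perp\bigl(\overline{u_{\eps,h}^\perp}\cdot\nabla_h^\perp\vphi\bigr)$ involves at most one derivative of $\overline{u_{\eps,h}^\perp}$ (multiplied by derivatives of the smooth, compactly supported $\vphi$), Lemma \ref{Lemma:merc} gives a uniform $L^2_T(L^2)$ bound; a Cauchy-Schwarz estimate together with $\rho_0^{in}\in W^{1,\infty}(\R^2)$ and the Bernstein-type bound applied to $\overline{u_{\eps,h}} - \overline{u_{\eps,h}}_M$ then yields the desired $O(2^{-M})$ control.

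For the second piece, I would use the Leibniz expansion $\curlh\bigl(\rho_0^{in}\,\overline{u_{\eps,h}}_M\bigr) = \rho_0^{in}\,\curlh(\overline{u_{\eps,h}}_M) + \nabla_h^\perp\rho_0^{in}\cdot\overline{u_{\eps,h}}_M$, together with the $H^1$-boundedness of the cut-off operator $S_M$, to obtain the uniform estimate $\|\curlh(\rho_0^{in}\,\overline{u_{\eps,h}}_M)\|_{L^2_T(L^2(\R^2))}\leq C$ (independently of both $\eps$ and $M$). Pairing this against $\overline{u_{\eps,h}^\perp} - \overline{u_{\eps,h}}_M^\perp$, which again has $L^2_T(L^2)$-norm controlled by $C\,2^{-M}$ thanks to the low-frequency approximation bound and Lemma \ref{Lemma:merc}, and against the smooth, compactly supported $\nabla_h^\perp\vphi$ via H\"older's inequality, closes the estimate on this piece as well.

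The main (and essentially only) technical subtlety will be to remember to integrate by parts on the first piece before applying the $H^1\to L^2$ low-frequency approximation, so as to avoid losing a derivative on the rough factor $\overline{u_{\eps,h}} - \overline{u_{\eps,h}}_M$; once this is organised correctly, the rest of the argument reduces to Cauchy-Schwarz on the time-space support of $\vphi$ and to the uniform bounds already at our disposal, so no further obstacle is expected.
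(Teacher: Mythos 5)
Your proof takes the same route as the paper: a telescoping decomposition of the difference, followed by the low-frequency approximation bound $\|f-S_Mf\|_{L^2}\lesssim 2^{-M}\|f\|_{H^1}$ combined with the uniform $L^2_T(H^1)$ control of $\overline{u_{\eps,h}}$ from Lemma~\ref{Lemma:merc}. The only cosmetic difference is the choice of pairing in the telescoping identity (the paper attaches the term where $\curlh$ falls on the difference to the filtered factor $\overline{u_{\eps,h}}_M^\perp$, while you attach it to the unfiltered $\overline{u_{\eps,h}^\perp}$), and you are in fact more explicit than the paper's terse sketch in pointing out that an integration by parts is the right move for the one piece where the derivative lands on the only-$L^2$-small factor $\overline{u_{\eps,h}}-\overline{u_{\eps,h}}_M$.
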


\begin{proof}[Proof of Lemma \ref{l:diff_reg}]
Let us write
\begin{align*}
&\curlh\big( \rho_0^{in}\, \overline{u_{\eps,h}}\big)\, \overline{u_{\eps,h}^{\perp}} \cdot \nabla_h^{\perp}\vphi\,
-\,\curlh\big( \rho_0^{in} \overline{u_{\eps,h}}_M\big)\, \overline{u_{\eps}}_M^{\perp} \cdot \nabla_h^{\perp}\vphi \\
&\qquad =\, \curlh\big( \rho_0^{in} \overline{u_{\eps,h}}\big)\, \Big(\overline{u_{\eps,h}}- \overline{u_{\eps,h}}_M\Big)^{\perp}\cdot\nabla_h^{\perp}\vphi\,-
\curlh\Big(\rho_0^{in} \left(\overline{u_{\eps,h}}-\overline{u_{\eps,h}}_M\right)\Big) \overline{u_{\eps,h}}_M^{\perp} \cdot \nabla_h^{\perp}\vphi\,.
\end{align*}
The result then follows from this equality and with the estimates
\begin{equation*}
\sup_{M\in\N}\sup_{\veps\in\,]0,1]}\left\|\oline{u_\veps}_M\right\|_{L^2_T(H^1(\R^2))}\,\leq\,C \quad \mbox{ and }\quad
\left\|\overline{u_{\eps}} - \overline{u_{\eps}}_M\right\|_{L^{2}_T(L^2(\bbR^2))}\,\leq\,C\,2^{-M}\,\left\|\overline{u_{\eps}}\right\|_{L^{2}_T(H^1(\bbR^2))}\,,
\end{equation*}
which hold true owing to the bounds of Lemma \ref{Lemma:merc} and the properties of the operator $S_M$.
\end{proof}
We are now able to show Proposition \ref{p:Gamma3}.

\begin{proof}[Proof of Proposition \ref{p:Gamma3}]
We will show that there exists a measure
\[
\wtilde{\Gamma}\,\in\, \bigcap_{T>0}\mathcal{M}\big([0,T]\times \bbR^2\big)\,,
\]
which does not charge the set $\left\{\nabla_h\rho_0^{in}=0\right\}$, such that
\begin{equation}
\label{conv:Gamma:tilde}
\lim_{\eps\ra0^+ }\iint_{\R_+\times\bbR^2}\curlh\big( \rho_0^{in}\,\overline{u_{\eps,h}}\big)\,\overline{u_{\eps,h}^{\perp}}\cdot\nabla_h^{\perp}\vphi\,\dd x_h\dt\,=\,
\left\lan\wtilde{\Gamma}, \frac{\nabla_h \rho_0^{in}}{\left|\nabla_h \rho_0^{in}\right|}\cdot\nabla_h^\perp\vphi\right\ran\,.
\end{equation}
Indeed, this property implies Proposition \ref{p:Gamma3} by denoting $ \Gamma_3 := \wtilde{\Gamma}/|\nabla_h \rho_0 | $.

From Lemma \ref{l:diff_reg}, we already know that we can work with the regularised velocity fields $\oline{u_\veps}_M$.
For notational convenience, from now on we denote with $ o(M) $ a function that depends on $ M $, such that $ \lim_{M \to \infty} o(M) =  0 $.
%
%
At this point, fixed $ M\in\N $, we will prove that there exisxts a smooth function $ \wtilde{\Gamma}_M $ over $\R_+\times\R^2$  such that
\begin{equation}
\label{F1}
\lim_{\eps \to 0^+}\left|\iint_{\R_+\times\bbR^2}\Big(\curlh\big(\rho_0^{in}\overline{u_{\eps,h}}_M\big)\,\overline{u_{\eps,h}}_M^{\perp}\cdot\nabla_h^{\perp}\vphi\,-\,
\wtilde{\Gamma}_M\, \frac{\nabla_h \rho_0^{in}}{\left|\nabla_h \rho_0^{in}\right|}\cdot\nabla_h^\perp\vphi\Big)\,\dd x_h \dt  \right|\, =\, o(M).
\end{equation}
For proving the previous property, we resort to the cut-off $ b_M $, introduced in the proof of Lemma \ref{l:first-approx}. Then, 
we can decompose 
\begin{equation*}
\beta_{\veps,M}\,:=\,\curlh\big(\rho_0^{in}\overline{u_{\eps,h}}_M\big)\,\overline{u_{\eps,h}}_M^{\perp}
\,=\,b_M\left(\left|\nabla_h \rho_0^{in}\right|\right)\,\beta_{\veps,M}\,+\,\Big(1-b_M\left(\left|\nabla_h \rho_0^{in}\right|\right)\Big)\,\beta_{\veps,M}\,.
\end{equation*}
Similarly to what done before for \eqref{est:M;nonso}, the first term on the right-hand side is of order $o(M)$ in $ L^1_{\rm loc} $. Indeed,
for any $T>0$ and any $ R > 0 $ fixed, we can estimate
\begin{align}
 \label{con:o(1)}
\left\|b_M\left(\left|\nabla_h \rho_0^{in}\right|\right)\,\beta_{\veps,M}\right\|_{L^1_T(L^1(B_R))}\,&\leq\,
\left\|\rho_0^{in}\right\|_{W^{1,\infty}(\bbR^2)}\,\left\|\oline{u_{\eps}}_M\right\|_{L^2_T(H^{1}(\bbR^2))}\,\left\|\oline{u_{\eps}}_M\right\|_{L^2_T(L^4(\bbR^2))} \\
& \qquad\qquad\qquad \times\, \left(\mc L \left(\left\{ x \in B_R\;\Big| \quad \left| \nabla_h \rho_0^{in}\right|\,\leq\,2/M \right\}\right) \right)^{1/4} \nonumber \\
&=\, o(M), \nonumber
\end{align} 
where we have also used the uniform estimates of Lemma \ref{Lemma:merc} and assumption \eqref{cond:12}.
Next, using again decomposition \eqref{eq:vect-dec} on the support of $ 1-b_M\left(\left|\nabla_h \rho_0^{in} \right|\right) $, we can write
\begin{align}
\label{last:rewrite} 
&\Big(1-b_M\left(\left|\nabla_h \rho_0^{in}\right|\right)\Big)\,\beta_{\veps,M}\, \\
&\quad= \,
\frac{1-b_M\left(\left|\nabla_h \rho_0^{in}\right|\right)}{\left|\nabla_h\rho_0^{in}\right|^2}\,\big(\beta_{\veps,M}\cdot \nabla_{h}\rho_0^{in}\big)\,
\nabla_{h}\rho_0^{in}
\,+\,\frac{1-b_M\left(\left|\nabla_h \rho_0^{in}\right|\right)}{\left|\nabla_h\rho_0^{in}\right|^2}\,\big(\beta_{\veps,M}\cdot\nabla_{h}^{\perp}\rho_0^{in}\big)\,
\nabla_{h}^{\perp} \rho_0^{in}\,. \nonumber
\end{align}

Regarding the first term appearing on the right of the previous relation, we notice that, for any $T>0$ fixed, one has
\begin{align}
\label{unif:est:hope}
&\left\|\frac{1-b_M\left(\left|\nabla_h \rho_0^{in}\right|\right)}{\left|\nabla_h\rho_0^{in}\right|}\,\big(\beta_{\veps,M}\cdot \nabla_{h}\rho_0^{in}\big)
\right\|_{L^{1}_T(L^2(\bbR^2))} 
\,\leq\,\left\|\rho_0^{in}\right\|_{W^{1,\infty}(\bbR^2)}\,
\left\|\oline{u_{\eps}}_M\right\|_{L^2_T(H^{1}(\bbR^2))}^2\,\leq\,C\,, 
\end{align}
with $ C>0 $ independent of $ M\in\N $ and $\eps\in\,]0,1]$, due to Lemma \ref{Lemma:merc}. Then, for any $M>0$ fixed, up to subsequence in $\veps\in\,]0,1]$, we have
\begin{equation}
\label{weak:conv:Gamma:tilde}
\frac{1-b_M\left(\left|\nabla_h \rho_0^{in}\right|\right)}{\left|\nabla_h\rho_0^{in}\right|}\,\big(\beta_{\veps,M}\cdot \nabla_{h}\rho_0^{in}\big)
\,\cvwstar\, \wtilde{\Gamma}_M \qquad\qquad \mbox{ in }\qquad \mathcal{M}\big([0,T] \times \bbR^2\big)
\end{equation}
as $ \eps $ converges to zero. Let us notice that estimate \eqref{unif:est:hope} is independent of $ M\in\N $. Hence, by choosing a diagonal subsequence,
there exists a subsequence of $ \eps\in\,]0,1] $ (which we do not relabel) such that convergence \eqref{weak:conv:Gamma:tilde} holds \emph{for any} $ M\in\N$.
In addition, using again that \eqref{unif:est:hope} is uniform in $M\in\N$, up to extraction (in $M$) we have
\begin{equation}
	\label{conv:F1}
\forall\,T>0\,,\qquad \wtilde{\Gamma}_{M}\, \cvwstar\, \wtilde{\Gamma}\qquad\qquad \mbox{ in } \qquad \mathcal{M}\big([0,T] \times \bbR^2\big)\,.
\end{equation} 
Notice however that convergences \eqref{weak:conv:Gamma:tilde} and \eqref{conv:F1} are not enough to prove our result. As a matter of fact,
in the integral
\begin{equation*}
\iint_{\R_+\times\bbR^2}
\frac{1-b_M\left(\left|\nabla_h \rho_0^{in}\right|\right)}{\left|\nabla_h\rho_0^{in}\right|^2}\,\big(\beta_{\veps,M}\cdot \nabla_{h}\rho_0^{in}\big)\,
\nabla_{h}\rho_0^{in}\cdot \nabla_h^{\perp}\vphi\,\dd x_h\dt\,,
\end{equation*}
the function 
\begin{equation}
	\label{sing:function}
\frac{\nabla_{h}\rho_0^{in}}{\left|\nabla_{h}\rho_0^{in}\right|}\cdot\nabla_h^{\perp}\vphi\qquad\qquad \mbox{ is \emph{not} continuous,}
\end{equation}
so the weak-$*$ convergence does not apply to pass to the limit. To tackle this difficulty, we now show that the measures 
\[
\frac{1-b_M\left(\left|\nabla_h \rho_0^{in}\right|\right)}{\left|\nabla_h\rho_0^{in}\right|}\,\big(\beta_{\veps,M}\cdot \nabla_{h}\rho_0^{in}\big)
\qquad\qquad \mbox{ and } \qquad\qquad \wtilde{\Gamma}_M 
\]
do \emph{not charge} the set where the function \eqref{sing:function} is not continuous, namely the points of the support of $ \vphi $
such that $\left| \nabla_h \rho_0^{in} \right| = 0  $.
For proving this property, let $T>0$ and $R>0$ such that $\Supp\vphi\subset[0,T]\times B_R $, and let $j\in\N $ be a big parameter. Then,
similarly to \eqref{con:o(1)}, one has
\begin{align*}
&\left\|\frac{1-b_M\left(\left|\nabla_h \rho_0^{in}\right|\right)}{\left|\nabla_h\rho_0^{in}\right|}\,\big(\beta_{\veps,M}\cdot \nabla_{h}\rho_0^{in}\big)
\,b_j\left(\left|\nabla_h \rho_0^{in} \right|\right) \right\|_{L^{1}_T(L^1(B_R))} \\
&\qquad\qquad
\leq\,\left\|\overline{u_{\eps}}_M\right\|_{L^2_T(H^{1}(\bbR^2))}\,\left\|\overline{u_{\eps}}_M\right\|_{L^2_T(L^{4}(\bbR^2))}\,
\left(\mc L \left(\left\{ x \in B_R\;\Big|\quad \left| \nabla_h \rho_0^{in}\right| \leq 2/j \right\}\right) \right)^{1/4} \,,
\end{align*}
which converges to zero as $j\ra+\infty $, uniformly both in $ \eps $ and $ M $. This implies that no concentration occurs where
$\left|\nabla_h \rho_0^{in}\right| = 0 $. If this is expected, owing to the support of the function $1-b_M\left(\left|\nabla_h \rho_0^{in}\right|\right)$,
the important point is that the same property is inherited by the measures $ \wtilde{\Gamma}_M $, due to the lower-semicontinuity of the weak-$*$ convergence.
With this observation and the convergences \eqref{weak:conv:Gamma:tilde} and \eqref{conv:F1} we infer that, up to suitable extractions,
we have
%

\begin{align}
 \label{con:Gamma:M}
&\lim_{M\ra+\infty}\lim_{\veps\ra0^+}\iint_{\R_+\times\R^2}
\frac{1-b_M\left(\left|\nabla_h \rho_0^{in}\right|\right)}{\left|\nabla_h\rho_0^{in}\right|^2}\,\big(\beta_{\veps,M}\cdot \nabla_{h}\rho_0^{in}\big)\,
\nabla_{h}\rho_0^{in}\cdot\nabla_h^\perp\vphi\,\dd x_h\dt \\
&\qquad\qquad\qquad\qquad\qquad\qquad\qquad\qquad\qquad\qquad\qquad\qquad\qquad
=\,\left\lan \wtilde\Gamma,\frac{\nabla_h \rho_0^{in}}{\left|\nabla_h \rho_0^{in}\right|}\cdot\nabla_h^\perp\vphi \right\ran\,. \nonumber
\end{align}

To complete the proof, we have to handle the second term on the right-hand side of \eqref{last:rewrite}. We are going to show that
\begin{equation} \label{lim:remainder}
\limsup_{\veps\ra0^+}\iint_{\R_+\times\bbR^2}
\frac{1-b_M\left(\left|\nabla_h \rho_0^{in}\right|\right)}{\left|\nabla_h\rho_0^{in}\right|^2}\,\big(\beta_{\veps,M}\cdot \nabla_{h}^\perp\rho_0^{in}\big)\,
\nabla_{h}^\perp\rho_0^{in}\cdot \nabla_h^{\perp}\vphi\,\dd x_h\dt\,=\,o(M)\,.
\end{equation}
Notice that, owing to the divergence-free condition on $ \overline{u_{\eps}}_M $, we can write
\[
\curlh\big(\rho_0^{in}\, \overline{u_{\eps,h}}_M\big)\,\overline{u_{\eps,h}}_M^{\perp}\cdot \nabla_{h}^{\perp}\rho_0^{in}\, =\,
\curlh\big(\rho_0^{in}\, \overline{u_{\eps,h}}_M\big)\,\divh\left(\rho_0^{in}\,\overline{u_{\eps,h}}_M\right)\,.
\]
From decomposition \eqref{dec:V:eps:M} in Proposition \ref{prop:27}, we then deduce that
\begin{equation*}
\limsup_{\eps \to 0^+}\left\|\curlh\big(\rho_0^{in}\, \overline{u_{\eps,h}}_M\big)\,\overline{u_{\eps,h}}_M^{\perp}\cdot \nabla_{h}^{\perp}\rho_0^{in}\,-\,
\overline{\eta_{\eps}}_M\,\div\big(\overline{V_{\eps}}_M\big)\right\|_{L^2_T(L^{2}(K))}\,\leq\,C\,2^{-M}
\end{equation*}  
for any compact $ K \subset \bbR^2 $, where we recall that $\overline{\eta_{\eps}}_M\,=\,\curlh\big(\overline{V_{\eps,h}}_M\big) $.
Now, using the wave system written in Proposition \ref{p:wave}, we gather
\begin{equation*}
\oline{\eta_\veps}_M\,\divh\big(\overline{V_{\eps,h}}_M\big)\,=\,\overline{\eta_{\eps}}_M\,\eps\,\d_t \overline{\sigma_{\eps}}_M\, =\,
\big(\oline{\eta_{\eps}}_M -\oline{\sigma_{\eps}}_M\big)\,\eps\,\partial_t\oline{\sigma_{\eps}}_M\,+\,\eps\,\partial_t\left|\overline{\sigma_{\eps}}_M\right|^2/2\,.
\end{equation*}
The second term in the right-hand side of the last equality is obviously small, once we integrate by parts with respect to time. For the first term
appearing therein, instead, we have
\begin{align*}
&\iint_{\R_+\times\bbR^2}\frac{1-b_M\left(\left|\nabla_h \rho_0^{in}\right|\right)}{\left|\nabla_h\rho_0^{in}\right|^2}\,
\big(\overline{\eta_{\eps}}_M -\overline{\sigma_{\eps}}_M\big)\,\eps\,\d_t\oline{\sigma_{\eps}}_M\,\nabla_{h}^{\perp}\rho_0^{in}\cdot\nabla_h^{\perp}\vphi\,
\dd x_h\dt \\
&\qquad= \,-\,\veps\iint_{\R_+\times\bbR^2}\frac{1-b_M\left(\left|\nabla_h \rho_0^{in}\right|\right)}{\left|\nabla_h\rho_0^{in}\right|^2}\,
\d_t\big(\overline{\eta_{\eps}}_M -\overline{\sigma_{\eps}}_M\big)\,\oline{\sigma_{\eps}}_M\,\nabla_{h}\rho_0^{in}\cdot\nabla_h\vphi\,
\dd x_h\dt\\
&\qquad\qquad\qquad\,-\,\veps\iint_{\R_+\times\bbR^2}\frac{1-b_M\left(\left|\nabla_h \rho_0^{in}\right|\right)}{\left|\nabla_h\rho_0^{in}\right|^2}\,
\big(\overline{\eta_{\eps}}_M -\overline{\sigma_{\eps}}_M\big)\,\oline{\sigma_{\eps}}_M\,\nabla_{h}\rho_0^{in}\cdot\nabla_h\d_t\vphi\,
\dd x_h\dt\\
&\qquad\qquad\quad\qquad\qquad \, -\,\veps\int_{\bbR^2}\frac{1-b_M\left(\left|\nabla_h \rho_0^{in}\right|\right)}{\left|\nabla_h\rho_0^{in}\right|^2}\,
\left(\overline{\eta_{\eps}^{in}}_M -\overline{\sigma_{\eps}^{in}}_M\right)\,\oline{\sigma_{\eps}^{in}}_M\,\nabla_{h}\rho_0^{in}\cdot\nabla_h\vphi(0,\cdot)\,
\dd x_h\,.
\end{align*}
By using Proposition \ref{p:wave} again, we deduce that all the terms on the right of the previous equality converge to $0$ when $\veps\ra0^+$,for any fixed $ M $.
Thus, we have finally proved \eqref{lim:remainder}.

In the end, convergence \eqref{conv:Gamma:tilde} is a consequence of Lemma \ref{l:diff_reg} and relations \eqref{con:Gamma:M} and \eqref{lim:remainder}. The proof
of the proposition is completed.
\end{proof}

\subsubsection{The convective term: conclusions}

To resume, we have proved the following result about the convergence of the convective term \eqref{nlt:to:lim}.

\begin{cor}
	\label{cor:conv:Gamma}
There exists a distribution $ \Gamma \in \mathcal{D}'\big(\R_+\times \bbR^2\big) $ such that, up to the extraction of a subsequence,
for any test function $\vphi\in C^\infty_c\big(\R_+\times\R^2\big)$, in the limit $\veps\ra0^+$ one has
\begin{equation*}
\iint_{\R_+\times\bbR^2}\overline{\rho_{\eps} u_{\eps,h} \otimes u_{\eps,h}} : \nabla_h \nabla_h^{\perp}\vphi \, \dx_h \dt\, \longrightarrow
\langle \rho_0^{in}\, \nabla_h \Gamma, \nabla_h^{\perp}\vphi \rangle\,.
\end{equation*}
\end{cor}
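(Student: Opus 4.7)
The plan is to assemble the statement by combining all the convergence results established throughout Subsection \ref{ss:convective}. The strategy proceeds in three steps, each invoking a previously proved result.

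First, I would use Proposition \ref{last:prob} to reduce the study of the full nonlinear term to the simpler quantity $\rho_0^{in}\,\overline{u_{\eps,h}}\otimes\overline{u_{\eps,h}}$. More precisely, the first item of Proposition \ref{last:prob} gives strong $L^1_{\rm loc}$ convergence to zero of $\overline{\rho_\veps u_{\eps,h}\otimes u_{\eps,h}} - \overline{\rho_\veps}\,\overline{u_{\eps,h}}\otimes\overline{u_{\eps,h}}$, whereas the second item yields convergence of $\overline{\rho_\veps}\,\overline{u_{\eps,h}}\otimes\overline{u_{\eps,h}} - \rho_0^{in}\,\overline{u_{\eps,h}}\otimes\overline{u_{\eps,h}}$ to zero in $\mc D'(\R_+\times\R^2)$. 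Testing against $\nabla_h\nabla_h^\perp\vphi$, which is smooth and compactly supported, both convergences allow me to replace, in the limit $\veps\ra0^+$, the convective integral with
\[
\iint_{\R_+\times\R^2}\rho_0^{in}\,\overline{u_{\eps,h}}\otimes\overline{u_{\eps,h}}:\nabla_h\nabla_h^\perp\vphi\,\dd x_h\dt\,=\,
-\iint_{\R_+\times\R^2}\divh\!\left(\rho_0^{in}\,\overline{u_{\eps,h}}\otimes\overline{u_{\eps,h}}\right)\cdot\nabla_h^\perp\vphi\,\dd x_h\dt\,,
\]
where the equality follows from an integration by parts.

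Second, I would apply the algebraic identity \eqref{rew:equ:term:7} derived in Paragraph \ref{sss:reductions}, which decomposes $\divh(\rho_0^{in}\,\overline{u_{\eps,h}}\otimes\overline{u_{\eps,h}})$ as the sum of three contributions:
\[
\frac{1}{2}\,\rho_0^{in}\,\nabla_h\left|\overline{u_{\eps,h}}\right|^2\,+\,\curlh\!\left(\rho_0^{in}\,\overline{u_{\eps,h}}\right)\overline{u_{\eps,h}^\perp}\,+\,\Theta_\veps\,.
\]
For the first and third pieces, Lemma \ref{l:first-approx} supplies distributions $\Gamma_1$ and $\Gamma_2$ in $\mc D'(\R_+\times\R^2)$ such that, after pairing with $\nabla_h^\perp\vphi$, the corresponding integrals converge respectively to $\lan\rho_0^{in}\nabla_h\Gamma_1,\nabla_h^\perp\vphi\ran$ and $\lan\rho_0^{in}\nabla_h\Gamma_2,\nabla_h^\perp\vphi\ran$. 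For the middle piece, which is the most delicate of the three, Proposition \ref{p:Gamma3} provides a distribution $\Gamma_3\in\mc D'(\R_+\times\R^2)$ with the analogous convergence. Setting $\Gamma\,:=\,\Gamma_1\,+\,\Gamma_2\,+\,\Gamma_3$ and summing the three limits yields the claim, after a final extraction of a subsequence (which can be chosen to be common to the three pieces by a diagonal argument).

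All the hard work has already been carried out in the preceding propositions; in particular, the main difficulty---namely passing to the limit in the singular term $\curlh(\rho_0^{in}\,\overline{u_{\eps,h}})\,\overline{u_{\eps,h}^\perp}$ through the high-frequency regularisation and the wave system analysis---has been handled in Proposition \ref{p:Gamma3}. The present corollary is therefore essentially an assembly result: the only minor technical point I would be careful about is verifying that the various subsequences extracted in Propositions \ref{last:prob}, \ref{l:first-approx} and \ref{p:Gamma3} can be taken to coincide, which is straightforward by a standard diagonal extraction since all three statements concern the same family $(\rho_\veps,u_\veps)_\veps$.
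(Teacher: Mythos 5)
Your proof is correct and follows essentially the same route as the paper: reduction via Proposition \ref{last:prob}, the decomposition \eqref{rew:equ:term:7}, Lemma \ref{l:first-approx}, Proposition \ref{p:Gamma3}, and a diagonal extraction. The only slip is a sign: having picked up a minus from the integration by parts, the consistent choice is $\Gamma := -\big(\Gamma_1+\Gamma_2+\Gamma_3\big)$ (as in the paper); this is immaterial for the corollary as stated, since $\Gamma$ is abstract, but it does matter when the result is plugged into the final limit equation of Theorem \ref{Main:Theo}.
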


\begin{proof} The proof is a consequence of the previous few sections. So let us resume the main steps.

Proposition \ref{last:prob} ensures that
the convergence of \eqref{nlt:to:lim} reduces to the one of \eqref{eq:pass-to-lim}.
At this point, we can use equality \eqref{rew:equ:term:7}, Lemma \ref{l:first-approx} and Proposition \ref{p:Gamma3}. Then, to prove the result it is enough
to set $ \Gamma\,:=\, -\,\big(\Gamma_1 +\Gamma_2 + \Gamma_3\big)$.
\end{proof}

\subsection{Passage to the limit}

In this section, we conclude the proof of Theorem \ref{Main:Theo} by showing how to pass to the limit in  \eqref{weak:form:to:lim},
namely in the equation
\begin{align*}
&\iint_{\R_+\times\R^2}\Big(-\left(\oline{\eta_\veps}-\oline{\s_\veps}\right)\,\d_t\vphi\,-\,
\oline{\rho_\veps u_{\veps,h}\otimes u_{\veps,h}}:\nabla_h\nabla_h^\perp\vphi\,+\,\oline{\o_\veps}\,\Delta_h\vphi\Big)\,\dd x_h\dt \\
\nonumber &\qquad\qquad\qquad\qquad\quad
-\,\frac{\alpha_{\eps}}{\ell_{\eps}}\,\iint_{\R_+\times\R^2}\big(\o_\veps^+\,+\,\o_\veps^-\big)\,\vphi\,\dd x_h\dt\,=\,
\int_{\R^2}\left(\overline{\eta_{\eps}^{in}} - \overline{\sigma_{\eps}^{in}}\right)\,\vphi(0,\cdot)\,\dd x_h\,,
\end{align*}
where $\vphi\in C^\infty_c\big(\R_+\times\R^2\big)$ is a test function.
Recall that, by assumption, $ \alpha_{\eps}/\ell_{\eps}\, \to\, \lambda\geq0 $.

In the case $ \lambda > 0 $, convergence \eqref{Conv:u}, Lemma \ref{lemma:cons:bound:data}, Proposition \ref{conv:eta:sigma} and Corollary \ref{cor:conv:Gamma}
ensure that the above equality converges, as $ \eps\ra0^+ $, to
\begin{align*}
&-\iint_{\R_+\times\R^2}\Big(\curlh\big(\rho_0^{in}\,u\big)-\oline{\s}\Big)\,\d_t\vphi\,\dd x_h\dt\,-\,
\langle \rho_0^{in}\, \nabla_h \Gamma, \nabla_h^{\perp}\vphi \rangle\,+\,\iint_{\R_+\times\R^2}\o\,\Delta_h\vphi\,\dd x_h\dt \\
\nonumber &\qquad\qquad\qquad\qquad
-\,\lambda\,\iint_{\R_+\times\R^2}\big(\o^+\,+\,\o^-\big)\,\vphi\,\dd x_h\dt\,=\,
\int_{\R^2}\Big(\curlh\big(m^{in}_0\big) - r_0^{in}\Big)\,\vphi(0,\cdot)\,\dd x_h\,,
\end{align*}
where $r^{in}_0$ and $m^{in}_0$ have been introduced in Subsection \ref{ss:assumptions} and where  we have set $\o\,:=\,\curlh(u)$
and $\o^\pm\,:=\,\curlh\big(u^\pm\big)$. Then, Theorem \ref{Main:Theo} is proved if we are able to identify $ u $, obtained as the weak limit of
$\big(\overline{u_{\eps,h}}\big)_\veps$, with $ u^{+} $ and $ u^{-} $ defined in Lemma \ref{lemma:cons:bound:data}.

To do that, notice that 
\begin{equation}
	\label{identification}
u - u^{-}\, =\,\big(u - \overline{u_{\eps, h}}\big)\, +\,\big(\overline{u_{\eps, h}} - u_{\eps, h}(\cdot,\cdot,-\ell_{\eps})\big)\, +\,
\big(u_{\eps,h}(\cdot,\cdot, -\ell_{\eps}) - u^{-}\big)\,.
\end{equation} 
The convergence \eqref{Conv:u} and Lemma \ref{lemma:cons:bound:data} together ensure that
\begin{equation*}
	 u - \overline{u_{\eps,h}}\,\cv\, 0 \qquad \text{ and } \qquad u_{\eps,h}(\cdot,\cdot, -\ell_{\eps}) - u^{-}\, \cv\, 0
\qquad\qquad \text{ in } \quad L^2_{\rm loc}\big(\R_+;L^2(\bbR^2)\big)\,.
\end{equation*}
As for the remaining term, we can argue as done in \eqref{eq:u_bar} to get
\begin{align*}
\left\|\overline{u}_{\eps} - u_{\eps}(\cdot,\cdot, -\ell_{\eps})\right\|_{L^2_T(L^2(\bbR^2))}^2\,&\leq\,\, C\,\ell_{\eps}^2\,
\overline{\left\| \partial_3 u_{\eps}(\cdot,\cdot,x_3)\right\|_{L^2_T(L^2(\bbR^2))}^2}\,,
\end{align*}
which of course converges to $0$ when $\veps\ra0^+$, owing to the bounds of Lemma \ref{Lemma:merc}.
We have thus shown that the right-hand side of \eqref{identification} converges weakly to zero in $ L^2\big([0,T]\times \bbR^2\big)$,
for any $T>0$. This implies that $ \overline{u} = u^{-} $. The equality $ \overline{u} = u^{+} $ follows similarly.

\medbreak
To conclude, let us consider the case $ \lambda = 0 $. The only convergence which changes with respect to the case $\lam>0$ is the one of the term
\begin{align*}
\frac{\alpha_{\eps}}{\ell_{\eps}}\,\iint_{\R_+\times\R^2}\big(\o_\veps^+\,+\,\o_\veps^-\big)\,\vphi\,\dd x_h\dt\,=\,
-\,\frac{\alpha_{\eps}}{\ell_{\eps}}\,\iint_{\R_+\times\R^2}\big(u_\veps(\cdot,\cdot,+\ell_\veps)\,+\,u_\veps(\cdot,\cdot,-\ell_\veps)\big)\cdot
\nabla_h^\perp\vphi\,\dd x_h\dt\,.
\end{align*}
We have to show that the above integral converges to zero. However, this is a straightforward consequence of the
uniform estimates of Lemma \ref{Lemma:merc}, which allow us to bound
\begin{align*}
& \left|\frac{\alpha_{\eps}}{\ell_{\eps}}\,\iint_{\R_+\times\R^2}\big(u_\veps(\cdot,\cdot,+\ell_\veps)\,+\,u_\veps(\cdot,\cdot,-\ell_\veps)\big)\cdot
\nabla_h^\perp\vphi\,\dd x_h\dt \right| \\
&\qquad\qquad\qquad \leq\,\frac{\alpha_{\eps}}{\ell_{\eps}}\,
\left\|u_\veps(\cdot,\cdot,+\ell_\veps)\,+\,u_\veps(\cdot,\cdot,-\ell_\veps)\right\|_{L^2_T(L^2(\R^2))}\,\left\|\nabla_h\vphi\right\|_{L^2_T(L^2(\R^2))}\,
\leq\,C\,\sqrt{\frac{\alpha_{\eps}}{\ell_{\eps}}}\,,
\end{align*}
which converges to zero as, by assumption, $\al_\veps/\ell_\veps\,\longrightarrow\,\lam=0$ when $\veps\ra0^+$.

Theorem \ref{Main:Theo} is now completely proved.

\addcontentsline{toc}{section}{References}
{\small

}

\end{document}